\def\ps@headings{%
\def\@oddhead{\mbox{}\scriptsize\rightmark \hfil \thepage}%
\def\@evenhead{\scriptsize\thepage \hfil \leftmark\mbox{}}%
\def\@oddfoot{}%
\def\@evenfoot{}}
\theoremstyle{plain} \newtheorem{theorem}{Theorem}
\theoremstyle{plain} \newtheorem{definition}{Definition}
\theoremstyle{plain} \newtheorem{lemma}{Lemma}
\theoremstyle{plain} \newtheorem{remark}{Remark}
\theoremstyle{plain} 
\theoremstyle{plain} 
\theoremstyle{plain} 
\theoremstyle{plain} \newtheorem{example}{Example}
\DeclareMathOperator*{\argmin}{arg\,min}
\begin{document}

\title{Fourth-order Tensors with Multidimensional Discrete Transforms}

\author{Xiao-Yang Liu and Xiaodong Wang
\IEEEcompsocitemizethanks{\IEEEcompsocthanksitem  X.~Liu and X.~Wang are with the Department of Electrical Engineering, Columbia University.
\IEEEcompsocthanksitem  X.~Liu is with the Department of Computer Science and Engineering, Shanghai Jiao Tong University.
}
\thanks{}}

\maketitle

\IEEEdisplaynotcompsoctitleabstractindextext
\IEEEpeerreviewmaketitle

\begin{abstract}

    The big data era is swamping areas including data analysis, machine/deep learning, signal processing, statistics, scientific computing, and cloud computing. The multidimensional feature and huge volume of big data put urgent requirements to the development of multilinear modeling tools and efficient algorithms. In this paper, we build a novel multilinear tensor space that supports useful algorithms such as SVD and QR, while generalizing the matrix space to fourth-order tensors was believed to be challenging. Specifically, given any multidimensional discrete transform, we show that fourth-order tensors are bilinear operators on a space of matrices. First, we take a transform-based approach to construct a new tensor space by defining a new multiplication operation and tensor products, and accordingly the analogous concepts: identity, inverse, transpose, linear combinations, and orthogonality. Secondly, we define the $\mathcal{L}$-SVD for fourth-order tensors and present an efficient algorithm, where the tensor case requires a stronger condition for unique decomposition than the matrix case. Thirdly, we define the tensor $\mathcal{L}$-QR decomposition and propose a Householder QR algorithm to avoid the catastrophic cancellation problem associated with the conventional Gram-Schmidt process. Finally, we validate our schemes on video compression and one-shot face recognition. For video compression, compared with the existing tSVD, the proposed $\mathcal{L}$-SVD achieves $3\sim 10$dB gains in RSE,  while the running time is reduced by about $50\%$ and $87.5\%$, respectively. For one-shot face recognition, the recognition rate is increased by about $10\% \sim 20\%$.

\end{abstract}



\section{Introduction}\label{sect:introduction}

  Driving by a rapidly growing number of sensor devices and sensing systems of rapidly growing resolution, the big data era is swamping areas including data analysis, machine/deep learning \cite{lecun2015deep}, signal processing, statistics, scientific computing, and cloud computing. The exponential explosion of big data featured as multidimensional and huge volume has highlighted the limitations of standard flat-view matrix models and the necessity to move toward more versatile data analysis tools \cite{baraniuk2011more}. Thus, the successful problem-solving tools provided by the numerical linear algebra need to be broadened and generalized. Higher-order tensor\footnote{Also known as multiway, $n$-way, multidimensional array, or multilinear data array in the literature.} modeling \cite{kolda2009tensor} together with efficient tensor-based algorithms enables such a fundamental paradigm shift.

  Tensors, as multilinear modeling tools, have attracted tremendous interests in recent years. The major advantages of representing data arrays as tensor models over matrx/vectors models are as follows: 1) tensor decompositions guarantee uniqueness which is useful for blind source separation \cite{cichocki2009nonnegative, comon2014tensors}, etc; 2) tensor modeling including low-rank tensor decomposition \cite{kolda2009tensor,papalexakis2016tensors,oseledets2011tensor,cichocki2015tensor} and tensor networks \cite{cichocki2016tensor,Chichochi2013}, turns the curse of dimensionality into a blessing of dimensionality \cite{cichocki2015tensor} by decomposing the big data arrays into much smaller latent factors,
  and 3) data analysis techniques using tensor models have great flexibility in the choice of constraints that match data properties, thus extract more meaningful latent components than matrix-based methods. Examplar applications include multiway component analysis \cite{cichocki2015tensor,lu2008mpca}, blind source separation \cite{cichocki2009nonnegative, comon2014tensors}, dimensionality reduction \cite{sidiropoulos2014parallel}, feature extraction \cite{zhang2013tensor}, classification/clustering and pattern recognition \cite{sun2016heterogeneous}, topic modeling \cite{anandkumar2014tensor}, and deep neural networks \cite{novikov2015tensorizing,janzamin2015beating, cohen2015expressive}.

  Existing tensor models \cite{kolda2009tensor,papalexakis2016tensors,oseledets2011tensor,cichocki2015tensor} treat tensors as multidimensional arrays of real values upon which algebraic operations generalizing matrix operations can be performed. The corresponding tensor spaces are viewed as the tensor product (namely the Kronecker product) of vector spaces, including 1) the canonical polyadic (CP) decomposition, the Tucker decomposition, and their variants \cite{kolda2009tensor}; 2) the higher-order SVD (HOSVD) \cite{de2000multilinear}; 3) the recently proposed tensor-train decomposition \cite{oseledets2011tensor} that is further extended to the tensor ring decomposition \cite{zhao2016tensor}; and 4) tensor networks \cite{cichocki2016tensor,Chichochi2013}. All the above tensor models rely on either contraction products (e.g., the $n$-mode product \cite{kolda2009tensor}, the vector inner product, and the Einstein product \cite{Einstein2007book}) or expansion products (e.g., the vector outer product and the Kronecker product). However, those two kinds of products will change tensors' order. Therefore, the corresponding tensor spaces lack the closure property, being fundamentally different from the well-studied conventional matrix space, thus the classic algorithms (SVD and QR, etc.) do not hold.

\subsection{Circular unfolding-folding based low-tubal-rank tensor model}
\label{subsec:circular_unfolding_folding_low_tubal_rank_tensor_model}

  The low-tubal-rank tensor model \cite{kilmer2011factorization,kilmer2013third} is the first trial to extend the conventional matrix space to third-order tensors \cite{braman2010third}. It is based on a circular unfolding-folding scheme (formally presented in Definition \ref{def:t_product}) that introduces structured redundancy by the circular unfolding process. The authors defined the t-product between two third-order tensors as follows: first unfold the left tensor into a block circulant matrice and the right tensor into a tall matrix, then perform conventional matrix multiplication between those two matrices, and finally fold the result matrix back into a third-order tensor. Under this new algebraic framework, \cite{kilmer2011factorization,kilmer2013third} generalized all classical algorithms of the conventional matrix space, such as SVD, QR, normalization, the Gram-Schmidt process, power iteration, and Krylov subspace methods. Further, it is shown  \cite{martin2013order} that this circular unfolding-folding scheme can be used to recursively define the tensor SVD decomposition for higher-order tensors.

  The authors \cite{kilmer2011factorization,kilmer2013third} pointed out that the t-product $*_t$ between two $1 \times 1 \times n$ tensors is in fact equivalent to the discrete circular convolution $\odot$ of two vectors, i.e., $\bm{x},\bm{y} \in \mathbb{R}^{1 \times 1 \times n}$, $\bm{x} *_t \bm{y} = \text{circ}(\bm{x}) \bm{y} = \bm{x} \odot \bm{y}$ where $\text{circ}(\bm{x})$ is the circular matrix derived from $\bm{x}$ (formally given in (\ref{eq:circ_vector})). Thus, the t-product between two third-order tensors is analogous to the traditional matrix multiplication between two matrices whose entries are $1 \times 1 \times n$ tensors, where the conventional scalar product is replaced by the discrete circular convolution. Moreover, from a computational perspective, it is shown \cite{kilmer2011factorization,kilmer2013third} that the t-product can be computed efficiently by performing a discrete Fourier transform (using FFT) along the tubal fibers of each third-order tensor, performing pair-wise matrix products for all frontal slices of the two tensors in the ``transform domain" (i.e. frequency domain), and then applying an inverse DFT along the tubal fibers of the result tensor. Therefore, the circular unfolding-folding based scheme defines operations based on the Fourier transform, as pointed out in Remark \ref{remark:t_product_fourier_transform}.

  Recently, this new tensor model is successfully applied to many engineering areas, such as seismic data processing \cite{Shuchin2015Seimic} and data completion \cite{zhang2016exact}, WiFi fingerprint-based indoor localization \cite{liu2016adaptive}, drone-based wireless relay \cite{xie2016drone}, MRI imaging \cite{Misha2014CT}, video compression and denoising \cite{zhang2014novel,liu2016low}, image clustering \cite{kernfeld2014clustering}, two-dimensional dictionary learning \cite{jiang2017graph}, and face recognition \cite{hao2013facial}. The authors in \cite{liu2016low} pointed out that, compared with other tensor models, the low-tubal-rank tensor model is superior in capturing  a ``spatial-shifting" correlation that is ubiquitous in real-world data arrays.

\subsection{Motivation to fourth-order tensor model with multidimensional discrete transforms}

  We are motivated to propose a new tensor model with general discrete transform due to the following observations:
  \begin{itemize}
   \item The t-product \cite{kilmer2011factorization,kilmer2013third} has a major disadvantage in that for real tensors, the FFT-based implementations of the t-product and the t-SVD factorization \cite{kilmer2011factorization,kilmer2013third} require intermediate complex arithmetic. Even taking advantage of complex symmetry in the Fourier domain, the complex arithmetic is much more expensive than real arithmetic. Therefore, we are interested in tensor models that involves only real-valued fast transforms, whose algorithms are faster than their counterparts in the low-tubal-rank tensor model \cite{kilmer2011factorization,kilmer2013third}.
    \item Combining the low-rank property and the transform-domain sparsity (not limiting to the frequency domain \cite{candes2006robust,lustig2008compressed}), we expect new tensor models to possess improvements in terms of compression ratios \cite{sidiropoulos2012multi} and accordingly better compression ratios lead to the design of faster algorithms.
  \end{itemize}

  Real-world data arrays exhibit strong sparsity in various multidimensional discrete transform domains \cite{candes2006robust,lustig2008compressed} besides the frequency domain. First, in EEG (electroencephalography) and MEG (magnetoencephalography) imaging \cite{becker2015brain}, widely used assumptions are: minimum energy in a transform domain, sparsity in a Fourier domain that is modeled as a space-time-frequency tensor, separability in space and wave-vector domain (for the spatial distribution of the sources), and separability in space and frequency domain (for the temporal distribution of the sources) that is modeled as a space-time-wave-vector tensor. Secondly, MRI \cite{lustig2008compressed} is naturally \textit{compressible} by sparse coding in the wavelet transform, and MRI scanners naturally acquires \textit{spatial-frequency encoded} samples rather than direct pixel samples, such as the single-slice 2DFT, multislice 2DFT, and 3DFT imaging, while CT data is collected in the 2D Frequency domain \cite{candes2006robust,Misha2014CT}. Thirdly, for image compression, JPEG utilizes the discrete cosine transform (DCT) \cite{wallace1992jpeg} and JPEG-$2000$ utilizes the wavelet transform. Fourthly, in face recognition \cite{he2005face}, the rotation and lighting effects can be captured by transform operations, while images can be treated as in the same class \cite{he2011laplacian} if there only differs in terms of rotation and distortion. Finally, in Internet of Things, sensory data can be represented as tensors of time series \cite{lim2010multiarray} that are periodic.

  We focus on the fourth-order tensors with the following considerations:
    \begin{itemize}
    \item The circular unfolding-folding based scheme to define the fourth-order tensor \cite{martin2013order} is recursive and thus complicated, and such a definition scheme does apply to transforms without structured matrix expressions, e.g., the wavelet transforms \cite{daubechies1992ten}.
    \item Fourth-order tensors is ubiquitous in machine/deep learning tasks, e.g., the one-shot fact recognition problem in Section \ref{sec:performance_evaluation}. To learn meaningful and generalizable models, allowing abstract algebraic structures with corresponding manipulation operations is an attractive paradigm. For example in face recognition, faces are essentially the combination (with scaling, distortion, and rotation) of complex elementary structures and patterns \cite{he2005face,he2011laplacian} while existing tensor models \cite{kolda2009tensor,papalexakis2016tensors,cichocki2015tensor} simply treat face images as multilinear data arrays that cannot model the rotation effect.
    \item In sensory data recovery, existing tensor models \cite{kolda2009tensor,papalexakis2016tensors,oseledets2011tensor,cichocki2015tensor}  become invalid for various data loss patterns and are insufficient to allow versatile sampling schemes. Fourth-order tensors will allow losing/sampling slices, similar to the fact \cite{liu2016adaptive} that the low-tubal-rank third-order tensor model allows losing a time series or sampling vectors.
    \end{itemize}

  To generalize all classical algorithms for matrices to fourth-order tensors with general multidimensional discrete transforms, we encounter the following challenges:
  \begin{itemize}
    \item Although tensors are multilinear data arrays, existing tensor models \cite{kolda2009tensor,papalexakis2016tensors,oseledets2011tensor,cichocki2015tensor}  cannot be treated as ``multilinear operators".  A fundamental fact in linear algebra states that one can view the matrix-vector product $A\bm{x}$ by interpreting it as a weighted sum (linear combination) of the columns of $A$. This observation in matrix case does not hold for existing higher-order tensor spaces \cite{kolda2009tensor,papalexakis2016tensors,cichocki2015tensor}, therefore, the classic algorithms become invalid.
    \item The important matrix SVD process (eigendecomposition together with all useful processes) does not hold for high-order tensor models. This failure roots in the fact that \textit{the contraction products (used in existing tensor models) lack the closure property for odd-order tensors}.
  \end{itemize}

\subsection{Our contributions}

   In this paper, we build a novel multilinear tensor space that supports useful algorithms in the conventional matrix space, such as SVD and QR. Specifically, given any multidimensional discrete transform, one is able to construct a new tensor space, and then we can treat fourth-order tensors are bilinear operators on a space of matrices. Note that in previous works \cite{kolda2009tensor,papalexakis2016tensors,oseledets2011tensor,cichocki2015tensor,kilmer2011factorization,kilmer2013third,martin2013order},  generalizing the matrix space to fourth-order tensors was believed to be challenging.

%

   First, we take a transform-based approach to define a new multiplication operation and tensor products, and accordingly the analogous concepts: identity, inverse, transpose, linear combinations, and orthogonality. Specifying the discrete transform of interest to be a discrete Fourier transform and considering the third-order case, our results can recovered all results in the low-tubal-rank tensor model \cite{kilmer2011factorization,kilmer2013third}.

   Secondly, we define the $\mathcal{L}$-SVD for fourth-order tensors and present an efficient algorithm. The fundamental difference between $\mathcal{L}$-SVD and conventional SVD lies in the inequivalence between the tensor-eigenvalue equation and the tensor-eigenvector equation, as pointed out in Remark \ref{remark:tensor_has_more_eigenvectors}. Therefore, the tensor case requires a stronger condition for unique decomposition than the matrix case.

   Thirdly, we define the tensor $\mathcal{L}$-QR decomposition and propose a Householder QR algorithm. In the low-tubal-rank tensor model \cite{kilmer2011factorization,kilmer2013third}, the authors directly adopted the conventional Gram-Schmidt process to compute the QR decomposition, while the conventional Gram-Schmidt process will encounter the catastrophic cancellation problem. The proposed Householder QR algorithm can avoid such a problem, while it cannot be extended from the matrix case.

   Finally, compared with the existing t-SVD, the proposed $\mathcal{L}$-SVD's performance gain is $3\sim6$dB for video compression, and the accuracy is increased about $15\%$ for one-shot face recognition, while the running time is reduced by about $50\%$ and $87.5\%$, respectively.

   Finally, we apply the new tensor model to two examplar applications: video compression and one-shot face recognition. We utilize the proposed $\mathcal{L}$-SVD to compress an NBA basketball video and a drone video of the Central Park in autumn.  Compared with the existing tSVD and SVD, $\mathcal{L}$-SVD  achieves $3\sim 10$dB gains in RSE while the running time is reduced by $xx\%$ and $xx\%$, respectively. For one-shot face recognition, we use the Weizmann face database and the recognition rate is increased by about $10\% \sim 20\%$.

   The remainder of the paper is organized as follows. Section II introduces the notations and several basic operations. Section III defines a new tensor space from a transform-based approach. Section IV and V present the $\mathcal{L}$-SVD and $\mathcal{L}$-QR decompositions, including the definitions, computing algorithms and correctness proofs. Section VI describes the performance evaluation, and Section VII concludes this work.



\section{Notations and Basic Operators}

  We first introduce the notations and preliminaries. Then, we describe several basic operators to manipulate the data arrays.

\subsection{Notations}

  The \textit{order} of a tensor is the number of modes, also known as ways or dimensions, e.g., third-order tensors and fourth-order tensors. Scalars are denoted by lowercase letters, e.g., $a$; vectors are denoted by boldface lowercase letters, e.g., $\bm{a}$; matrices\footnote{A matrix is a second-order tensor, a vector is a first-order tensor, and a scalar is a tensor of order zero.} are denoted by boldface capital letters, e.g., $\bm{A}$; and higher-order tensors are denoted by calligraphic letters, e.g., a fourth-order tensor $\mathcal{A} \in \mathbb{R}^{n_1 \times n_2 \times n_3 \times n_4}$ where $\mathbb{R}$ denotes the set of real numbers.
   The transpose of a vector or a matrix are denoted with a superscript $T$, e.g., $\bm{a}^T$, $\bm{A}^T$, while the Hermitian transpose (conjugate transpose) are denoted with a superscript $\mathrm{H}$, e.g., $\bm{a}^{\mathrm{H}}$,  $\bm{A}^{\mathrm{H}}$.

  The $i$th element of a vector $\bm{a}$ is $\bm{a}_i$, the $(i,j)$th element of a matrix $\bm{A}$ is $\bm{A}_{ij}$ or $\bm{A}(i,j)$, and similarly for higher-order tensors, e.g.,  $\mathcal{X}_{ijk}$, $\mathcal{X}_{ijk\ell}$ or  $\mathcal{X}(i,j,k)$, $\mathcal{X}(i,j,k,\ell)$. The $k$th element in a sequence is denoted by a superscript index, e.g., $\bm{a}^{k}$ denotes the $k$th vector in a sequence of vectors while $\bm{A}^{k}$ denotes the $k$th matrix in a sequence of matrices. Subarrays are formed when a subset of the indices is fixed, e.g., the rows and columns of a matrix. A colon is used to indicate all elements of a mode, e.g., the $j$th column of $\bm{A}$ is denoted by $\bm{A}_{:j}$, and the $i$th row of a matrix $\bm{A}$ is denoted by $\bm{A}_{i:}$, alternatively, $\bm{A}_j$ and $\bm{A}_i^T$. We use $[n]$ to denote the index set $\{1,2,...,n\}$, and given $n\geq i$, $[i:n]$ denotes the index set $\{i,i+1,...,n\}$. Let $\textbf{det}(\bm{A})$ denote the determinant of a square matrix $\bm{A} \in \mathbb{R}^{n \times n}$.

  Let $\text{vec}(\mathcal{A})$ denote the vector representation of $\mathcal{A}$ (the ordering of the elements is not important so long as it is consistent), and $\text{fold}(\cdot)$ is the inverse operator that transforms $\text{vec}(\mathcal{A})$ back to $\mathcal{A}$. Given a vector $\bm{x} \in \mathbb{R}^{n}$, the $\ell_2$-norm is $||\bm{x}||_2 = \sqrt{\sum\limits_{i \in [n]} \bm{x}_i^2}$, while in the high-order case (matrices and tensors) it becomes the Frobenius norm ($F$-norm) defined as follows
  \begin{equation}
  || \mathcal{A} ||_F = \sqrt{\text{vec}(\mathcal{A})^2} = \sqrt{\sum\limits_{i \in [n_1]} \sum\limits_{j\in[n_2]}\sum\limits_{k\in[n_3]}\sum\limits_{\ell \in [n_4]}\mathcal{A}_{ijk\ell}^2}.
  \end{equation}
  The spectrum norm of a matrix $\bm{A} \in \mathbb{R}^{n_1 \times n_2}$ is defined in terms of the $\ell_2$-norm of a vector
  \begin{equation}\label{eq:matrix_spectrum_norm}
  ||\bm{A}|| = \max\limits_{\bm{x} \in \mathbb{R}^{n_2 \times 1},~||\bm{x}||_2 = 1} ||\bm{A} \bm{x}||_2.
  \end{equation}

\subsection{Basic Operators}

  We define serveral basic operators that will facilitate our description and analysis in Section \ref{sec:4D_tensor_decomposition}. Intuitively, the operator $\text{MatView}(\cdot)$ extracts a sequence of matrices from a fourth-order tensor, while $\text{TenView}(\cdot)$ is the inverse operator. Given two sequences $\text{MatView}(A)$ and $\text{MatView}(B)$, we exploit the block diagonal representation to represent the parallel matrix multiplications.

  The $\text{MatView}(\cdot)$ operator forms a fourth-order tensor into a sequence of matrices. Formally, $\text{MatView}(\cdot)$ takes a tensor $\mathcal{A} \in \mathbb{R}^{n_1 \times n_2 \times n_3 \times n_4}$ and returns a sequence of $n_1 \times n_2$ matrices, as follows
  \begin{equation}\label{eq:matveiew}
  \begin{split}
  \text{MatView}(\mathcal{A}) &= \{\bm{A}^1,...,\bm{A}^p,...,\bm{A}^P\},~~P=n_3n_4,~~p \in [P],\\
  \bm{A}^p(i,j) &= \mathcal{A}(i,j,k,\ell),~p=(k-1)n_3 + \ell,~i \in [n_1],~j\in[n_2],~k \in [n_3],~\ell \in [n_4].
  \end{split}
  \end{equation}
  The operator that folds $\text{MatView}(\mathcal{A})$ back to tensor $\mathcal{A}$ is defined as follows
  \begin{equation}\label{eq:tenview}
  \text{TenView}(\text{MatView}(\mathcal{A})) = \mathcal{A}.
  \end{equation}

  Given two fourth-order tensors $\mathcal{A} \in \mathbb{R}^{n_1 \times n' \times n_3 \times n_4}$ and $\mathcal{B} \in \mathbb{R}^{n' \times n_2 \times n_3 \times n_4}$, the two sequences $\text{MatView}(A)$ and $\text{MatView}(B)$ are both of size $P = n_3n_4$. The $p$th matrices are $\bm{A}^p \in \mathbb{R}^{n_1 \times n'}$ and $\bm{B}^p \in \mathbb{R}^{n' \times n_2}$, and their multiplication is well-defined as $\bm{C}^p =\bm{A}^p \bm{B}^p \in \mathbb{R}^{n_1 \times n_2}$. One can represent $\text{MatView}(A)$ as a much bigger block diagonal matrix as follows
  \begin{equation}\label{eq:block_diagonal_representation}
  \text{blkdiag}(\text{MatView}(\mathcal{A})) = \left[
   \begin{array}{ccc}
    \bm{A}^{1} &  & \\
    & \ddots & \\
    & & \bm{A}^{P}\\
    \end{array}
    \right].
  \end{equation}
  Then, the elementwise matrix multiplication of two sequences can be represented as
  \begin{equation}\label{eq:blkdiagonal_presentation_multiplication_two_matrix_sequence}
  \text{blkdiag}(\text{MatView}(\mathcal{C})) = \text{blkdiag}(\text{MatView}(\mathcal{A})) \cdot \text{blkdiag}(\text{MatView}(\mathcal{B})),
  \end{equation}
  where the operation $\cdot$ denotes the conventional matrix multiplication. Note that (\ref{eq:blkdiagonal_presentation_multiplication_two_matrix_sequence}) compactly represents the following $P$ parallel matrix multiplications
  \begin{equation}\label{eq:parallel_multiplication_block_structure}
  \left[
   \begin{array}{ccc}
    \bm{C}^{1} &  & \\
    & \ddots & \\
    & & \bm{C}^{P}\\
    \end{array}
    \right] = \left[
   \begin{array}{ccc}
    \bm{A}^{1} &  & \\
    & \ddots & \\
    & & \bm{A}^{P}\\
    \end{array}
    \right] \cdot \left[
   \begin{array}{ccc}
   \bm{B}^{1} &  & \\
    & \ddots & \\
    & & \bm{B}^{P}\\
    \end{array}
    \right].
  \end{equation}


 For vector $\bm{x} \in \mathbb{R}^{n}$, the corresponding circular matrix is
  \begin{equation}\label{eq:circ_vector}
  \text{circ}(\bm{x}) = \left[
  \begin{array}{ccccc}
  \bm{x}_{(1)} & \bm{x}_{(n)} & \bm{A}_{(n - 1)} & \cdots & \bm{x}_{(2)} \\
  \bm{x}_{(2)} & \bm{x}_{(1)} & \bm{x}_{(n)}     & \cdots & \bm{x}_{(3)} \\
  \vdots   & \ddots   & \ddots       & \ddots & \vdots \\
  \bm{x}_{(n)} & \bm{x}_{(n - 1)} & \ddots   & \bm{x}_{(2)} & \bm{x}_{(1)}\\
  \end{array}
  \right],
  \end{equation}

 For a fourth-order tensor $\mathcal{A} \in \mathbb{R}^{n_1 \times n_2 \times n_3 \times n_4}$, we use the notation $\bm{A}_{(i)} \in \mathbb{R}^{n_1 \times n_2 \times n_3}$ to denote the third-order tensor created by holding the $4$th index of $\mathcal{A}$ fixed at $i$, $i \in [n_4]$. We create the following block circulant representation
  \begin{equation}\label{eq:bcirc}
  \text{bcirc}(\mathcal{A}) = \left[
  \begin{array}{ccccc}
  \bm{A}_{(1)} & \bm{A}_{(n_4)} & \bm{A}_{(n_4 - 1)} & \cdots & \bm{A}_{(2)} \\
  \bm{A}_{(2)} & \bm{A}_{(1)} & \bm{A}_{(n_4)}     & \cdots & \bm{A}_{(3)} \\
  \vdots   & \ddots   & \ddots       & \ddots & \vdots \\
  \bm{A}_{(n_4)} & \bm{A}_{(n_4 - 1)} & \ddots   & \bm{A}_{(2)} & \bm{A}_{(1)}\\
  \end{array}
  \right],
  \end{equation}
  where $\text{bcirc}(\mathcal{A}) \in \mathbb{R}^{n_1n_4 \times n_2n_4 \times n_3}$.
  The $\text{unfold}(\cdot)$ command takes an $n_1 \times n_2 \times n_3 \times n_4$ tensor and returns an $n_1n_4 \times n_2 \times n_3$ block tensor as follows
  \begin{equation}\label{eq:unfold}
  \text{unfold}(\mathcal{A}) = \left[
  \begin{array}{ccccc}
  \bm{A}_{(1)} \\
  \bm{A}_{(2)} \\
  \vdots   \\
  \bm{A}_{(n_4)} \\
  \end{array}
  \right].
  \end{equation}
  The operation that takes $\text{unfold}(\mathcal{A})$ back to tensor $\mathcal{A}$ is the $\text{fold}(\cdot)$ command:
  \begin{equation}\label{eq:fold}
  \text{fold}(\text{unfold}(\mathcal{A})) = \mathcal{A}.
  \end{equation}


\section{New Tensor Space}
\label{sec:new_tensor_space}

We build a new tensor space for fourth-order tensors. More specifically, we define a novel tensor-scalar multiplication, and accordingly define the multiplication of two tensors, identity, inverse, transpose, diagonality, orthongonality, and subspaces.

\subsection{A New Tensor Space}

   We build a new tensor space in which fourth-order tensors act as linear operators in a way similar to the conventional matrix space. More specifically, this new tensor space views $n_1 \times n_2 \times n_3 \times n_4$ fourth-order tensors on a space of $n_1 \times n_2$ matrices with entries in $\mathbb{R}^{1 \times 1 \times n_3 \times n_4}$.

  \begin{definition}\label{def:tensor_scalar}
  (\textbf{Tensor-scalar}) We call an element of the space $\mathbb{R}^{1 \times 1 \times n_3 \times n_4}$ as a tensor-scalar. The set of tensor-scalars are denoted by $\mathfrak{R}$.
  \end{definition}

  Let $\bm{1} \in \mathbb{R}^{1 \times 1 \times n_3 \times n_4}$ denote the tensor scalar with all entries equal to $1$, and $\bm{0}$ denote the zero tensor scalar (its dimension will be clear from the context). The addition $+$ and multiplication $\bullet$ are two fundamental operations in a space. In the space $\mathfrak{R}$ of tensor-scalars, we set the addition operation to be the element-wise addition, while the the multiplication operation defined in the following is based on a two-dimensional discrete transform.

  \begin{definition}\label{def:tensor_scalar_multiplication}
  (\textbf{Tensor-scalar multiplication}) Given an invertible two-dimensional discrete transform $\mathcal{L}:~\mathfrak{R} \rightarrow \mathfrak{R}$, the elementwise multiplication $*$, and $\alpha,~\beta \in \mathfrak{R}$, we define the tensor-scalar multiplication
  \begin{equation}\label{eq:tensor_scalar_multiplication}
  \alpha \bullet \beta \triangleq \mathcal{L}^{-1}(\mathcal{L}(\alpha) * \mathcal{L}(\beta)),
  \end{equation}
  where the multidimensional transform $\mathcal{L}:~\mathfrak{R} \rightarrow \mathfrak{R}$ and inverse transform $\mathcal{L}^{-1}:~\mathfrak{R} \rightarrow \mathfrak{R}$ together perform a forward or backward transform on each tensor-scalar.
  \end{definition}

  In the following, the one-to-one mappings $\mathcal{L}:~\mathfrak{R}^{n_1 \times n_2} \rightarrow \mathfrak{R}^{n_1 \times n_2}$ and $\mathcal{L}^{-1}:~\mathfrak{R}^{n_1 \times n_2} \rightarrow \mathfrak{R}^{n_1 \times n_2}$ represent the forward and backward transforms on each tensor-scalars of the $n_1 \times n_2$ matrices. We introduce the notation $\widetilde{\mathcal{A}}$ to denote the transform-domain representation of $\mathcal{A} \in \mathfrak{R}^{n_1 \times n_2}$ such that $\widetilde{\mathcal{A}} = \mathcal{L}(\mathcal{A})$ and $\mathcal{A} = \mathcal{L}^{-1}(\widetilde{\mathcal{A}})$.

    \begin{definition}\label{def:tensor_scalar_magnitude_ordering}
  (\textbf{Magnitude and ordering of tensor-scalars})
  The magnitudes of $\alpha \in \mathfrak{R}$ is denoted as $\text{abs}(\cdot)$, defined in an elementwise way as follows
  \begin{equation}\label{eq:magnitude_definition}
  \text{abs}(\alpha) = \mathcal{L}^{-1}(|\widetilde{\alpha}|),
  \end{equation}
  where $|\cdot|$ denotes the absolute values in an elementwise manner. Then, we introduce the ordering of tensor-scalars as follows
  \begin{equation}\label{eq:ordering_tensor_scalar}
  \alpha \succeq \beta,~~\text{if}~\alpha_p \geq \beta_p~~\text{for~all}~p\in[n_3n_4].
  \end{equation}
  \end{definition}

  \begin{definition}
  (\textbf{Sign of a tensor-scalar}) Given a tensor-scalar $\alpha \in \mathfrak{R}$, we denote its sign as $\angle \alpha$, defined as follows
  \begin{equation}\label{eq:angle_definition}
  \angle \alpha \bullet \text{abs}(\alpha) = \alpha,
  \end{equation}
  where $\text{abs}(\cdot)$ is given in Definition \ref{def:tensor_scalar_magnitude_ordering}.
  \end{definition}

  \begin{remark}
  Combining (\ref{eq:tensor_scalar_multiplication}), (\ref{eq:magnitude_definition}) and (\ref{eq:angle_definition}), one can compute the sign of a tensor-scalar $\alpha \in \mathfrak{R}$ as follows
  \begin{equation}
  \begin{split}
  \angle \alpha &= \mathcal{L}^{-1}\left(\frac{\widetilde{\alpha}_1}{\mathcal{L}(\text{abs}(\alpha))_1}, \frac{\widetilde{\alpha}_2}{\mathcal{L}(\text{abs}(\alpha))_2}, ..., \frac{\widetilde{\alpha}_{n_3n_3}}{\mathcal{L}(\text{abs}(\alpha))_{n_3n_3}} \right)\\
  &= \mathcal{L}^{-1}\left(\frac{\widetilde{\alpha}_1}{|\widetilde{\alpha}_1|}, \frac{\widetilde{\alpha}_2}{|\widetilde{\alpha}_2|}, ..., \frac{\widetilde{\alpha}_{n_3n_3}}{|\widetilde{\alpha}_{n_3n_3}|} \right),
  \end{split}
  \end{equation}
  where $\mathcal{L}(\text{abs}(\alpha))_p = |\widetilde{\alpha}_p|$ for $p \in [n_3n_4]$.
  \end{remark}

  \begin{definition}
  (\textbf{Square roots of a tensor-scalar}) For a tensor-scalar $\alpha \in \mathfrak{R}$, the square roots of $\alpha$ is defined as: $\sqrt{\alpha} \triangleq \mathcal{L}^{-1}(\sqrt{\widetilde{\alpha}})$, where $\sqrt{\widetilde{\alpha}}$ is computed in an elementwise manner. Note that $\sqrt{\alpha}$ can be complex-valued, and the set of square roots $\sqrt{\alpha}$ could be as large as $2^{n_3n_4}$.
  \end{definition}

  \begin{lemma}\label{lemma:multiplicative_unity}
  (\textbf{Multiplicative unity $e$}) Let $e = \mathcal{L}^{-1}(\bm{1})$ where $\bm{1} \in \mathbb{R}^{1 \times 1 \times n_3 \times n_4}$ denotes an $n_3 \times n_4$ matrix with all entries equal to $1$, then $e$ is the multiplicative unity for the tensor-scalar multiplication $\bullet$.
  \end{lemma}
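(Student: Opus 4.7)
The plan is to prove the claim by direct substitution into the definition of tensor-scalar multiplication given in (\ref{eq:tensor_scalar_multiplication}), exploiting the fact that the elementwise multiplication $*$ has the all-ones tensor $\bm{1} \in \mathbb{R}^{1 \times 1 \times n_3 \times n_4}$ as its identity. Since both $\mathcal{L}$ and $\mathcal{L}^{-1}$ are one-to-one mappings on $\mathfrak{R}$ and $\mathcal{L}(\mathcal{L}^{-1}(\cdot))$ is the identity map, applying the transforms to $e = \mathcal{L}^{-1}(\bm{1})$ returns the all-ones tensor-scalar in the transform domain.

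First, I would unwind the definition: for an arbitrary $\alpha \in \mathfrak{R}$, write $\alpha \bullet e = \mathcal{L}^{-1}(\mathcal{L}(\alpha) * \mathcal{L}(e))$. Then I would substitute $\mathcal{L}(e) = \mathcal{L}(\mathcal{L}^{-1}(\bm{1})) = \bm{1}$, so the expression reduces to $\mathcal{L}^{-1}(\mathcal{L}(\alpha) * \bm{1})$. Because $*$ is the elementwise product, multiplying any tensor-scalar by $\bm{1}$ elementwise leaves it unchanged, i.e., $\mathcal{L}(\alpha) * \bm{1} = \mathcal{L}(\alpha)$. Finally, applying $\mathcal{L}^{-1}$ yields $\alpha$, as desired.

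For the other side, I would note that the elementwise product $*$ is commutative, and therefore so is $\bullet$ at the level of tensor-scalars, giving $e \bullet \alpha = \alpha \bullet e = \alpha$. Since $\alpha \in \mathfrak{R}$ was arbitrary, $e$ serves as the (two-sided) multiplicative unity, completing the proof. There is no real obstacle here; the only subtlety worth flagging is the implicit reliance on $\mathcal{L}$ being invertible (so that the composition $\mathcal{L} \circ \mathcal{L}^{-1}$ acts as the identity), which is part of the standing hypothesis on $\mathcal{L}$ in Definition~\ref{def:tensor_scalar_multiplication}.
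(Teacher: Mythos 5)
Your proof is correct and follows essentially the same route as the paper's: both use $\mathcal{L}(e)=\mathcal{L}(\mathcal{L}^{-1}(\bm{1}))=\bm{1}$, the fact that $\bm{1}$ is the identity for the elementwise product $*$, and the invertibility of $\mathcal{L}$ to pull the identity back through $\mathcal{L}^{-1}$. No gaps; the two-sidedness via commutativity of $*$ is handled the same way in the paper.
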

  \begin{proof}
  To show that $e$ is the multiplicative unity, we prove that $\alpha \bullet e = e \bullet \alpha = \alpha$ for any $\alpha \in \mathfrak{R}$. Since $e = \mathcal{L}^{-1}(\bm{1})$, we have $\mathcal{L}(e) = \bm{1}$. For the elementwise multiplication $*$, we have $\bm{1} * \mathcal{L}(\alpha) = \mathcal{L}(\alpha) * \bm{1} = \mathcal{L}(\alpha)$. Since $\mathcal{L}$ is an invertible transform, namely a bijection mapping, we apply the inverse transform $\mathcal{L}^{-1}$ to both sides and get $\alpha \bullet e = e \bullet \alpha = \alpha$.
  \end{proof}

  \begin{lemma}\label{lemma:multiplication_is_communicative}
   (\textbf{Multiplicative communicativity}) The tensor-scalar multiplication $\bullet$ is communicative.
  \end{lemma}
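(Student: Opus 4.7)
The plan is to reduce commutativity of $\bullet$ on $\mathfrak{R}$ to the (elementary) commutativity of the elementwise product $*$ on the transform-domain representations, via the definition in (\ref{eq:tensor_scalar_multiplication}). Because $\mathcal{L}$ is, by hypothesis, an invertible mapping $\mathfrak{R}\to\mathfrak{R}$, the two compositions $\mathcal{L}^{-1}(\mathcal{L}(\cdot)*\mathcal{L}(\cdot))$ agree as soon as their arguments inside $*$ agree.

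First I would fix arbitrary tensor-scalars $\alpha,\beta\in\mathfrak{R}$ and write out both sides, namely $\alpha\bullet\beta=\mathcal{L}^{-1}(\mathcal{L}(\alpha)*\mathcal{L}(\beta))$ and $\beta\bullet\alpha=\mathcal{L}^{-1}(\mathcal{L}(\beta)*\mathcal{L}(\alpha))$, directly from Definition \ref{def:tensor_scalar_multiplication}. Next I would observe that $*$ is defined elementwise on the $n_3 n_4$ transform-domain entries of a tensor-scalar, so for each index $p\in[n_3 n_4]$ the $p$th entry of $\mathcal{L}(\alpha)*\mathcal{L}(\beta)$ is the ordinary real (or complex, if $\mathcal{L}$ is complex-valued) product $\widetilde{\alpha}_p\,\widetilde{\beta}_p$, which equals $\widetilde{\beta}_p\,\widetilde{\alpha}_p$ by commutativity of multiplication in the underlying scalar field. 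Hence $\mathcal{L}(\alpha)*\mathcal{L}(\beta)=\mathcal{L}(\beta)*\mathcal{L}(\alpha)$ as elements of $\mathfrak{R}$.

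Finally I would apply $\mathcal{L}^{-1}$ to both sides of this equality; since $\mathcal{L}^{-1}$ is well-defined as a single-valued map, the images coincide, giving $\alpha\bullet\beta=\beta\bullet\alpha$. There is no real obstacle here: the statement is essentially a one-line consequence of the fact that the new multiplication $\bullet$ inherits its algebraic shape from the pointwise product $*$ in the $\mathcal{L}$-domain, so every commutative identity of $*$ is automatically transported back to $\bullet$ by conjugation with the bijection $\mathcal{L}$. The only subtlety worth flagging is that this argument uses nothing about the particular transform $\mathcal{L}$ beyond invertibility, which foreshadows why many subsequent algebraic laws (associativity, distributivity with tensor-scalar addition) will admit essentially the same three-line proof.
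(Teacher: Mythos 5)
Your proposal is correct and follows essentially the same route as the paper's own proof: both reduce commutativity of $\bullet$ to the elementwise commutativity of $*$ in the transform domain and then apply $\mathcal{L}^{-1}$ to both sides. Your version merely spells out the per-entry step $\widetilde{\alpha}_p\,\widetilde{\beta}_p=\widetilde{\beta}_p\,\widetilde{\alpha}_p$ explicitly, which the paper leaves implicit.
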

  \begin{proof}
  We show that $\bullet$ is communicative by proving $\alpha \bullet \beta = \beta \bullet \alpha$ for any $\alpha,~\beta \in \mathfrak{R}$. Since the elementwise multiplication $*$ is communicative, i.e., $\mathcal{L}(\alpha) * \mathcal{L}(\beta) = \mathcal{L}(\beta) * \mathcal{L}(\alpha)$, then applying the inverse transform $\mathcal{L}^{-1}$ to both sides, we get $\alpha \bullet \beta = \beta \bullet \alpha$. Therefore, $(\mathfrak{R},~\bullet)$ is an abelian group.
  \end{proof}

  Next we prove that the operator defined in (\ref{def:tensor_scalar_multiplication}) is actually an operation in the space $\mathfrak{R} = \mathbb{R}^{1 \times 1 \times n_3 \times n_4}$, while on the contrary  \cite{kilmer2008third}\cite{kilmer2011factorization} adopted an existing operation (i.e., the circular convolution operation).   Lemma \ref{lemma:tensor_scalar_multiplication_is_an_operation} is the starting point for further definitions including tensor identity, tensor inverse, and tensor eigenvalue.

  \begin{lemma}\label{lemma:tensor_scalar_multiplication_is_an_operation}
  The tensor-scalar multiplication $\bullet$ is an operation in the space $\mathfrak{R}$. Furthermore, $(\mathfrak{R},~\bullet)$ is an abelian group.
  \end{lemma}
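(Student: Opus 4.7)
The plan is to verify, one by one, the group axioms---closure, associativity, identity, inverse, and commutativity---by pushing every statement about $\bullet$ through the bijection $\mathcal{L}$ to the corresponding statement about the entrywise product $*$, and then pulling back with $\mathcal{L}^{-1}$. In this way most of the work reduces to standard facts about $*$, which is itself an operation on $\mathfrak{R}$ because $\mathfrak{R}=\mathbb{R}^{1\times 1\times n_3\times n_4}$ is closed under entrywise multiplication.

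First I would establish closure, which is literally the statement that $\bullet$ is an operation on $\mathfrak{R}$. For $\alpha,\beta\in\mathfrak{R}$, the hypothesis $\mathcal{L}:\mathfrak{R}\to\mathfrak{R}$ gives $\mathcal{L}(\alpha),\mathcal{L}(\beta)\in\mathfrak{R}$; entrywise multiplication keeps us in $\mathfrak{R}$; and $\mathcal{L}^{-1}:\mathfrak{R}\to\mathfrak{R}$ returns $\alpha\bullet\beta\in\mathfrak{R}$. Associativity then follows at once by the same transport principle, since $*$ is associative: expanding $(\alpha\bullet\beta)\bullet\gamma$ via Definition \ref{def:tensor_scalar_multiplication} and using $\mathcal{L}(\mathcal{L}^{-1}(x))=x$ reduces the expression to $\mathcal{L}^{-1}((\widetilde{\alpha}*\widetilde{\beta})*\widetilde{\gamma})$, which by associativity of $*$ equals $\mathcal{L}^{-1}(\widetilde{\alpha}*(\widetilde{\beta}*\widetilde{\gamma}))=\alpha\bullet(\beta\bullet\gamma)$.

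Next I would invoke Lemma \ref{lemma:multiplicative_unity} to supply the identity element $e=\mathcal{L}^{-1}(\bm{1})$ and Lemma \ref{lemma:multiplication_is_communicative} to supply commutativity. For inverses, I would set $\alpha^{-1}\triangleq\mathcal{L}^{-1}(\widetilde{\alpha}^{-1})$, where $\widetilde{\alpha}^{-1}$ denotes the entrywise reciprocal of $\widetilde{\alpha}$, and check directly from Definition \ref{def:tensor_scalar_multiplication} that $\alpha\bullet\alpha^{-1}=\mathcal{L}^{-1}(\widetilde{\alpha}*\widetilde{\alpha}^{-1})=\mathcal{L}^{-1}(\bm{1})=e$.

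The only non-routine point---and hence the main obstacle---is the inverse axiom, because $\widetilde{\alpha}^{-1}$ is only defined when every entry of $\widetilde{\alpha}$ is nonzero. In particular the zero tensor-scalar $\bm{0}$ has no inverse, so strictly speaking $(\mathfrak{R},\bullet)$ is a commutative monoid; the abelian-group statement should be read as applying to the subset of tensor-scalars whose transform is entrywise nonzero, in exact parallel with the scalar case where $(\mathbb{R},\cdot)$ is not a group but $(\mathbb{R}\setminus\{0\},\cdot)$ is. In the write-up I would make this restriction explicit and note that ``non-vanishing in the transform domain'' is the natural $\mathcal{L}$-analogue of being nonzero; this subtlety will also matter later when the proof of $\mathcal{L}$-SVD needs to divide by singular tensor-scalars.
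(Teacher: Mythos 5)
Your proof follows the same transport-through-$\mathcal{L}$ strategy as the paper's: associativity is pulled back from associativity of the entrywise product, the identity comes from Lemma \ref{lemma:multiplicative_unity}, commutativity from Lemma \ref{lemma:multiplication_is_communicative}, and the inverse is constructed as $\mathcal{L}^{-1}$ of the entrywise reciprocal. Where you differ is on the inverse axiom, and there you are right and the paper is not: the paper asserts that the elementwise inverse of $\mathcal{L}(\alpha)$ ``is well-defined'' and concludes that $(\mathfrak{R},\bullet)$ is an abelian group, but this fails for any $\alpha$ whose transform $\widetilde{\alpha}$ has a zero entry, most obviously $\alpha=\bm{0}$. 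So $(\mathfrak{R},\bullet)$ is only a commutative monoid, and your proposed restriction to tensor-scalars that are entrywise nonzero in the transform domain is the correct fix. Note that your restriction is genuinely necessary and cannot be weakened to merely excluding $\bm{0}$: the paper's own Example \ref{exam:R_is_not_a_field} exhibits nonzero zero divisors $\bm{a}\bullet\bm{b}=\bm{0}$, so $\mathfrak{R}\setminus\{\bm{0}\}$ is not even closed under $\bullet$, whereas the set of elements with $\widetilde{\alpha}$ entrywise nonzero is exactly the unit group of the ring and is closed. Your remark that this subtlety resurfaces when the $\mathcal{L}$-SVD argument divides by $\bm{\sigma}_j^{-1}$ is also well taken; the paper silently assumes invertibility there too.
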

  \begin{proof}
  To prove that $\bullet$ is an operation in $\mathfrak{R}$, we need to verify that the tensor-scalar multiplication $\bullet$ satisfies three axioms \cite{pinter2010book}: 1) $\bullet$ is associative, i.e., $(\alpha \bullet \beta) \bullet \gamma = \alpha \bullet (\beta \bullet \gamma)$ for $\alpha,~\beta,~\gamma \in \mathfrak{R}$; 2) the existence of a multiplicative unity, namely, there is a tensor-scalar $e$ in $\mathfrak{R}$ such that $\alpha \bullet e = e \bullet \alpha = \alpha$ for any $\alpha \in \mathfrak{R}$; and 3) the existence of a multiplicative inverse, namely, for every tensor scalar $\alpha \in \mathfrak{R}$, there is an tensor-scalar $\alpha^{-1} \in \mathfrak{R}$  such that $\alpha \bullet \alpha^{-1} = \alpha^{-1} \bullet \alpha = e$. In addition, to prove that $(\mathfrak{R},~\bullet)$ is an abelian group we need to show that the tensor-scalar multiplication $\bullet$ is communicative, i.e., $\alpha \bullet \beta = \beta \bullet \alpha$.

  First, we verify the associativity. Note that the elementwise multiplication $*$ is associative, i.e., $(\mathcal{L}(\alpha) * \mathcal{L}(\beta)) * \mathcal{L}(\gamma) = \mathcal{L}(\alpha) * (\mathcal{L}(\beta) * \mathcal{L}(\gamma))$. Applying the inverse transform $\mathcal{L}^{-1}$ to both sides and combining the definition in (\ref{def:tensor_scalar_multiplication}), we get $(\alpha \bullet \beta) \bullet \gamma = \alpha \bullet (\beta \bullet \gamma)$. Secondly, the existence of a multiplicative unity is verified in Lemma \ref{lemma:multiplicative_unity}.
  Thirdly, we verify the existence of a multiplicative inverse. Let $\alpha^{-1} = \mathcal{L}^{-1}(\mathcal{L}(\alpha)^{-1})$ where $\mathcal{L}(\alpha)^{-1}$ denotes the elementwise inverse of $\mathcal{L}(\alpha)$ that is well-defined. Then, $\alpha \bullet \alpha^{-1} = \mathcal{L}^{-1}(\mathcal{L}(\alpha) * \mathcal{L}(\alpha)^{-1}) = \mathcal{L}^{-1}(\bm{1}) = e$, i.e., $\alpha \bullet \alpha^{-1} = e$. Similarly, we can verify that $\alpha^{-1} \bullet \alpha = e$. Therefore, the space $\mathfrak{R}$ with the tensor-scalar multiplication $\bullet$ is a group.

  Further, Lemma \ref{lemma:multiplication_is_communicative} shows that $\bullet$ is communicative. Therefore, $(\mathfrak{R},~\bullet)$ is an abelian group.
  \end{proof}

  Note that the tensor-scalars play the role of ``scalars" in the space $\mathfrak{R}$. It would be ideal for $\mathfrak{R}$ to be a field, unfortunately, this is not the case as we point out in the following example. Therefore, existing results in the conventional matrix space and the vector space that are defined on fields will not hold. However, we show that it is still able to build a new tensor space to support classic algorithms (SVD, QR, power method and etc.) developed in the conventional matrix space.

  \begin{example}\label{exam:R_is_not_a_field}
  Consider the case $\mathfrak{R} = \mathbb{R}^{1 \times 1 \times 2 \times 2}$ where the tensor-scalars are essentially matrices. Let $\bm{a} = [a~a; a~a]$ such that $a \neq 0$, $\bm{b} = [1~1;-1~-1]$, and the discrete transform $\mathcal{L}$ be the discrete Fourier transform (DFT), then
  \begin{equation}
  \bm{a} \bullet \bm{b} = \bm{0} = \left[
  \begin{array}{cc}
  0 & 0 \\
  0 & 0 \\
  \end{array}
  \right].
  \end{equation}
  For other transforms, one can construct similar examples to show the existence of zero divisors. Therefore, $\mathfrak{R}$ is not a field.
  \end{example}

  \begin{definition}\label{def:tensor_row_column}
  (\textbf{Tensor-column and tensor-row}) We view a fourth-order tensor $\mathcal{A} \in \mathbb{R}^{n_1 \times n_2 \times n_3 \times n_4}$ as an $n_1 \times n_2$ matrix of tensor-scalars, and define the tensor-columns to be $\mathcal{A}(:,j,:,:),~j \in [n_2]$ and the tensor-rows to be $\mathcal{A}(i,:,:,:),~i \in [n_1]$.
  \end{definition}

  The tensor-columns and tensor-rows are essentially ``vectors". For example, the $j$th tensor-column $\mathcal{A}(:,j,:,:)$ is a column vector of tensor-scalars, while the $i$th tensor-row $\mathcal{A}(i,:,:,:)$ is a row vector of tensor-scalars. For easy presentation, we use $\mathcal{A}_j$ to denote $\mathcal{A}(:,j,:,:)$, and $\mathcal{A}_i^T$ to denote $\mathcal{A}(i,:,:,:)$~\footnote{Here, the superscript $^T$ means that we treat $\mathcal{A}_i^T$ as a row vector, as in the matrix case.}. Correspondingly, the space $\mathbb{R}^{n_1 \times n_2 \times n_3 \times n_4}$ is viewed as a matrix space $\mathfrak{R}^{n_1 \times n_2}$.

  \begin{definition}\label{def:square_tensor}
  (\textbf{Square tensor and rectangular tensor}) We view a fourth-order tensor $\mathcal{A} \in \mathbb{R}^{n_1 \times n_2 \times n_3 \times n_4}$ as an $n_1 \times n_2$ matrix in the space $\mathfrak{R}^{n_1 \times n_2}$ with entries being tensor-scalars. If $n_1 = n_2$, we say $\mathcal{A}$ is a square tensor, otherwise we call it a rectangular tensor.
  \end{definition}

  \begin{definition}\label{def:tensor_L_diagonal}
  (\textbf{$\mathcal{L}$-diagonal tensor}) A tensor $\mathcal{A} \in \mathfrak{R}^{n_1 \times n_2}$ is called {\em $\mathcal{L}$-diagonal} if $\mathcal{A}(i,j) = \bm{0}$ for $i \neq j$, where $\bm{0}$ denotes the zero tensor-scalars.
  \end{definition}

  \begin{definition}\label{def:tensor_linear_combination}
  (\textbf{Tensor-linear combinations}) Given tensor scalars $\bm{c}_j \in \mathfrak{R}$, $j \in [n_2]$, a tensor-linear combination of the tensor-columns $\mathcal{A}_j \in \mathfrak{R}^{n_1 \times 1}$, $j \in [n_2]$, is defined as
  \begin{equation}
  \begin{split}
  &\mathcal{A}_1 \bullet \bm{c}_1 + ... + \mathcal{A}_{n_2} \bullet \bm{c}_{n_2} = \mathcal{A} \bullet \bm{c},
  ~\text{where}~\mathcal{A} = [\mathcal{A}_1, ..., \mathcal{A}_{n_2}],
  \bm{c} = \left[
  \begin{array}{c}
  \bm{c}_1\\
  \vdots \\
  \bm{c}_{n_2}
  \end{array}
  \right].
  \end{split}
  \end{equation}
  \end{definition}

  \begin{definition}\label{def:tensor_product}
  (\textbf{Tensor product: $\mathcal{L}$-product}) The $\mathcal{L}$-product $\mathcal{C} = \mathcal{A} \bullet \mathcal{B}$ of $\mathcal{A} \in \mathfrak{R}^{n_1 \times n'}$ and $\mathcal{B} \in \mathfrak{R}^{n' \times n_2}$ is a tensor in $\mathfrak{R}^{n_1 \times n_2}$ (i.e., $\mathbb{R}^{n_1 \times n_2 \times n_3 \times n_4}$), the $(i,j)$th element of $\mathcal{C}$ is defined as follows
  \begin{equation}\label{eq:tenso_scalar_matrix_multiplication}
      \mathcal{C}(i,j) = \sum\limits_{k \in [n']} \mathcal{A}(i, k) \bullet \mathcal{B}(k,j), ~~i \in [n_1],~j\in [n_2].
  \end{equation}
  \end{definition}

  \begin{lemma}\label{lemma:tensor_product_transform_domain_matrix_multiplication}
  The $\mathcal{L}$-product $\mathcal{C} = \mathcal{A} \bullet \mathcal{B}$ can be calculated in the following way:
  \begin{equation}\label{eq:tensor_product_matrix_view}
  \text{bkldiag}(\text{MatView}(\widetilde{\mathcal{C}})) = \text{bkldiag}(\text{MatView}(\widetilde{\mathcal{A}})) \cdot \text{bkldiag}(\text{MatView}(\widetilde{\mathcal{B}})).
  \end{equation}
  Then, we stack the diagonal block matrix $\text{bkldiag}(\text{MatView}(\widetilde{\mathcal{C}}))$ back to tensor $\widetilde{\mathcal{C}}$ and then perform the inverse transform to get $\mathcal{C}$, i.e., $\mathcal{C} = \mathcal{L}^{-1}(\widetilde{\mathcal{C}})$.
  \end{lemma}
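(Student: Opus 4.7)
The plan is to prove the lemma by translating the entrywise definition of the $\mathcal{L}$-product into the transform domain, reducing it to a collection of ordinary matrix products on the frontal slices, and then repackaging these into the block-diagonal equation.

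First, I would use the fact that the invertible discrete transform $\mathcal{L}$ is $\mathbb{R}$-linear, so it commutes with finite sums. Applying $\mathcal{L}$ to both sides of the defining identity $\mathcal{C}(i,j) = \sum_{k \in [n']} \mathcal{A}(i,k) \bullet \mathcal{B}(k,j)$ from Definition \ref{def:tensor_product}, and rewriting each $\bullet$ via Definition \ref{def:tensor_scalar_multiplication}, I obtain
\begin{equation*}
\widetilde{\mathcal{C}}(i,j) \;=\; \sum_{k \in [n']} \widetilde{\mathcal{A}}(i,k) * \widetilde{\mathcal{B}}(k,j).
\end{equation*}
Since $*$ is the elementwise (Hadamard) product on tensor-scalars in $\mathbb{R}^{1 \times 1 \times n_3 \times n_4}$, restricting to a single coordinate $(r,s)$ of the tensor-scalar yields the purely scalar identity
\begin{equation*}
\widetilde{\mathcal{C}}(i,j,r,s) \;=\; \sum_{k \in [n']} \widetilde{\mathcal{A}}(i,k,r,s)\, \widetilde{\mathcal{B}}(k,j,r,s).
\end{equation*}

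Next, I would invoke the $\text{MatView}(\cdot)$ definition in (\ref{eq:matveiew}): setting $p = (r-1)n_3 + s$, the identity above reads exactly $\widetilde{\bm{C}}^p(i,j) = \sum_k \widetilde{\bm{A}}^p(i,k)\, \widetilde{\bm{B}}^p(k,j)$, which is the entrywise definition of the conventional matrix product $\widetilde{\bm{C}}^p = \widetilde{\bm{A}}^p \cdot \widetilde{\bm{B}}^p$. Thus each of the $P = n_3 n_4$ frontal-slice matrices satisfies an ordinary matrix product. By the block-diagonal packaging rule (\ref{eq:blkdiagonal_presentation_multiplication_two_matrix_sequence})--(\ref{eq:parallel_multiplication_block_structure}), the collection of $P$ parallel matrix multiplications is equivalent to the single block-diagonal identity
\begin{equation*}
\text{blkdiag}(\text{MatView}(\widetilde{\mathcal{C}})) = \text{blkdiag}(\text{MatView}(\widetilde{\mathcal{A}})) \cdot \text{blkdiag}(\text{MatView}(\widetilde{\mathcal{B}})),
\end{equation*}
as claimed. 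The recovery $\mathcal{C} = \mathcal{L}^{-1}(\widetilde{\mathcal{C}})$ is then immediate from the invertibility of $\mathcal{L}$ and the notation introduced right after Definition \ref{def:tensor_scalar_multiplication}.

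The main obstacle is bookkeeping rather than substance: one must verify that the ``discrete transform'' $\mathcal{L}$ is genuinely $\mathbb{R}$-linear (so that it commutes with the sum over $k$), and that the index correspondence $p \leftrightarrow (r,s)$ used by $\text{MatView}$ is applied consistently to $\mathcal{A}$, $\mathcal{B}$, and $\mathcal{C}$. Linearity is automatic for any transform expressible as a matrix--vector product (DFT, DCT, orthogonal wavelets, etc.), so once this is granted the proof is a direct index-chase with no deeper algebraic subtlety.
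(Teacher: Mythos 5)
Your proof is correct and follows essentially the same route as the paper's: apply the transform to the entrywise definition of the $\mathcal{L}$-product, reduce to $P$ parallel slice-wise matrix products, and package them via the block-diagonal representation (\ref{eq:parallel_multiplication_block_structure}). Your explicit observation that $\mathcal{L}$ must be $\mathbb{R}$-linear for the transform to commute with the sum over $k$ is a point the paper's proof silently assumes, so your version is, if anything, slightly more careful.
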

  \begin{proof}
  Let us consider the transform-domain representation of (\ref{eq:tenso_scalar_matrix_multiplication}), we have
  \begin{equation}
  \mathcal{L}(\mathcal{C}(i,j))  = \sum\limits_{k \in [n']}  \mathcal{L}(\mathcal{A}(i, k)) * \mathcal{L}(\mathcal{B}(k,j)),~k \in [n'],
  \end{equation}
  which can be represented as $\widetilde{\bm{C}}^p =\widetilde{\bm{A}}^p \widetilde{\bm{B}}^p,~p \in [P]$. Then, according to (\ref{eq:parallel_multiplication_block_structure}), one can easily get (\ref{eq:tensor_product_matrix_view}).
  \end{proof}

  \begin{lemma}\label{lemma:tensor_identity}
  (\textbf{Identity tensor}) The identity tensor $\mathcal{I} \in \mathfrak{R}^{n \times n}$ is an $\mathcal{L}$-diagonal square tensor with $e$'s on the main diagonal and zeros elsewhere, i.e, $\mathcal{I}(i,i) = e$ for $i\in [n]$, where all other entries are zero tensor-scalars $\bm{0}$'s.
  \end{lemma}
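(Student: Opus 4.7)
The plan is to verify, directly from Definition \ref{def:tensor_product}, that the tensor $\mathcal{I}$ described in the statement satisfies the two defining identities of an identity element, namely $\mathcal{I} \bullet \mathcal{A} = \mathcal{A}$ and $\mathcal{A} \bullet \mathcal{I} = \mathcal{A}$ for every compatible $\mathcal{A}$. Since the $\mathcal{L}$-product is defined entrywise as a sum of tensor-scalar products $\bullet$, the whole argument reduces to two facts about tensor-scalars: that $e$ is the multiplicative unity of $(\mathfrak{R},\bullet)$, and that the zero tensor-scalar $\bm{0}$ annihilates every $\alpha\in\mathfrak{R}$.

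First I would record the annihilation property $\bm{0}\bullet\alpha = \alpha\bullet\bm{0} = \bm{0}$ as a one-line consequence of Definition \ref{def:tensor_scalar_multiplication}: because $\mathcal{L}$ is linear we have $\mathcal{L}(\bm{0})=\bm{0}$, hence $\mathcal{L}(\bm{0})*\mathcal{L}(\alpha)=\bm{0}$, and applying $\mathcal{L}^{-1}$ yields $\bm{0}$. The multiplicative-unity property $e\bullet\alpha=\alpha\bullet e=\alpha$ is already established in Lemma \ref{lemma:multiplicative_unity}.

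With those two ingredients in hand, the main calculation is short. Applying Definition \ref{def:tensor_product} gives
\begin{equation*}
(\mathcal{I}\bullet\mathcal{A})(i,j) \;=\; \sum_{k\in[n]} \mathcal{I}(i,k)\bullet\mathcal{A}(k,j).
\end{equation*}
For every $k\neq i$, $\mathcal{I}(i,k)=\bm{0}$, so by annihilation the corresponding summand is $\bm{0}$; for $k=i$, $\mathcal{I}(i,i)=e$, so the summand equals $e\bullet\mathcal{A}(i,j)=\mathcal{A}(i,j)$ by Lemma \ref{lemma:multiplicative_unity}. Summing the single surviving term gives $(\mathcal{I}\bullet\mathcal{A})(i,j)=\mathcal{A}(i,j)$. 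The reverse identity $\mathcal{A}\bullet\mathcal{I}=\mathcal{A}$ follows by the same calculation together with the commutativity of $\bullet$ established in Lemma \ref{lemma:multiplication_is_communicative}.

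There is no real obstacle here; the statement is essentially an unwinding of the definitions. If a sanity check is desired, an alternative is to pass to the transform domain via Lemma \ref{lemma:tensor_product_transform_domain_matrix_multiplication}: since $\mathcal{L}(e)=\bm{1}$ and $\mathcal{L}(\bm{0})=\bm{0}$, every frontal slice of $\widetilde{\mathcal{I}}$ is the ordinary $n\times n$ identity matrix, so $\text{blkdiag}(\text{MatView}(\widetilde{\mathcal{I}}))$ is the identity of the appropriate size and therefore acts trivially under the block-diagonal matrix product on the right-hand side of (\ref{eq:tensor_product_matrix_view}). Either route provides a complete proof in a few lines.
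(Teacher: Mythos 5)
Your proof is correct and follows essentially the same route as the paper: expand $(\mathcal{A}\bullet\mathcal{I})(i,j)$ via Definition \ref{def:tensor_product}, observe that only the $k=j$ term survives, and invoke Lemma \ref{lemma:multiplicative_unity} for that term and commutativity for the reverse product. You are in fact slightly more careful than the paper, which silently uses the annihilation property $\alpha\bullet\bm{0}=\bm{0}$ that you justify explicitly.
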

  \begin{proof}
  Given any $\mathcal{A} \in \mathfrak{R}^{n \times n}$ and the identity tensor $\mathcal{I} \in \mathfrak{R}^{n \times n}$, we prove that $\mathcal{C} = \mathcal{A} ~\bullet~ \mathcal{I} = \mathcal{I} ~\bullet~ \mathcal{A} = \mathcal{A}$. According to Definition \ref{def:tensor_product}, we have
  \begin{equation}
  \mathcal{C}(i,j) = \sum\limits_{k \in [n]} \mathcal{A}(i, k) \bullet \mathcal{I}(k,j) = \mathcal{A}(i, j) \bullet \mathcal{I}(j,j) = \mathcal{A}(i, j), ~~i \in [n], j \in [n],
  \end{equation}
  which verifies that $\mathcal{C} = \mathcal{A} \bullet \mathcal{I}= \mathcal{A}$. Since Lemma \ref{lemma:tensor_scalar_multiplication_is_an_operation} showed that the tensor-scalar product $\bullet$ is communicative, we can easily verify that $\mathcal{C} = \mathcal{I} \bullet \mathcal{A}= \mathcal{A}$.
  \end{proof}

  \begin{definition}\label{def:tensor_inverse}
  (\textbf{Tensor inverse}) A tensor $\mathcal{A} \in \mathfrak{R}^{n \times n}$ is invertible if there exists a tensor $\mathcal{A}^{-1} \in \mathfrak{R}^{n \times n}$ such that $\mathcal{A} \bullet \mathcal{A}^{-1} = \mathcal{A}^{-1} \bullet \mathcal{A} = \mathcal{I}$. Note that sometimes it is convenient to use the conventional notation $\mathcal{A}^{-1} = \frac{\mathcal{I}}{\mathcal{A}}$.
  \end{definition}

  \begin{definition}\label{def:tensor_column_row_subspaces}
  (\textbf{Tensor-column subspace and tensor-row subspace}) We define the tensor-column subspace of a tensor $\mathcal{A} \in \mathfrak{R}^{n_1 \times n_2}$ to be the space spanned by the tensor-columns $\mathcal{A}_j$, $j \in [n_2]$, and similarly the tensor-row subspace to be the space spanned by the tensor-rows $\mathcal{A}_i^T$, $i \in [n_1]$, denoted by $\text{t-span}(\mathcal{U})$ and $\text{t-span}(\mathcal{V})$, respectively. Formally, $\mathcal{U} \in \mathfrak{R}^{n_2 \times n_2}$ and $\mathcal{V} \in \mathfrak{R}^{n_1 \times n_1}$ are tensors, and the two subspaces can be expressed as follows
  \begin{equation}\label{eq:tensor_column_row_subspaces}
  \begin{split}
  \text{t-span}(\mathcal{U}) &= \left\{\mathcal{X} = \sum\limits_{j \in [n_2]} \mathcal{U}_j \bullet \bm{c}_j,~\bm{c}_j \in \mathfrak{R}\right\} = \left\{\mathcal{Y} = \sum\limits_{j \in [n_2]} \mathcal{A}_j \bullet \bm{d}_j,~\bm{d}_j \in \mathfrak{R}\right\},\\
  \text{t-span}(\mathcal{V}) &= \left\{\mathcal{X} = \sum\limits_{i \in [n_1]} \mathcal{V}_i \bullet \bm{c}_i,~\bm{c}_i \in \mathfrak{R}~\right\} = \left\{\mathcal{Y} = \sum\limits_{i \in [n_1]} \mathcal{A}_i^T \bullet \bm{d}_i,~\bm{d}_i \in \mathfrak{R}\right\}.
  \end{split}
  \end{equation}
  \end{definition}

  Therefore, if we restrict $\mathcal{U}$ and $\mathcal{V}$ to be ``orthogonal" (as formally defined in Definition \ref{def:orthogonal_tensor} in the following), then the tensor-columns $\mathcal{U}_j, ~j\in [n_2]$ and $\mathcal{V}_i,~i \in [n_1]$ are the basis of $\text{t-span}(\mathcal{U})$ and $\text{t-span}(\mathcal{V})$, respectively.

  \begin{lemma}\label{lemma:tensor_hermintian_transpose}
  (\textbf{Tensor Hermintian transpose}) Given $\mathcal{A} \in \mathfrak{R}^{n_1 \times n_2} = \mathbb{R}^{n_1 \times n_2 \times n_3 \times n_4}$, we define the Hermintian transpose $\mathcal{A}^H \in \mathfrak{R}^{n_2 \times n_1}$ such that
  in the two sequences $\text{MatView}(\mathcal{L}(\mathcal{A}^H)) =[\widetilde{\bm{A}^H}^1,...,\widetilde{\bm{A}^H}^P]$ and $\text{MatView}(\mathcal{A}) =[\widetilde{\bm{A}}^1,...,\widetilde{\bm{A}}^P]$ with $P = n_3n_4$, we have $\widetilde{\bm{A}^H}^p = (\widetilde{\bm{A}^p})^H$ for $p \in [P]$.
  Then, the multiplication reversal property of the Hermintian transpose holds, i.e., $\mathcal{A}^H \bullet \mathcal{B}^H = (\mathcal{B} \bullet \mathcal{A})^H$ for $\mathcal{A} \in \mathfrak{R}^{n_1 \times n_2},~\mathcal{B} \in \mathfrak{R}^{n_3 \times n_1}$.
  \end{lemma}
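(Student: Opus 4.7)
The plan is to reduce everything to the transform domain and invoke Lemma~\ref{lemma:tensor_product_transform_domain_matrix_multiplication}, which says that an $\mathcal{L}$-product corresponds to parallel ordinary matrix products on the $P=n_3n_4$ frontal slices $\widetilde{\bm{A}}^p$, $\widetilde{\bm{B}}^p$ of the transform-domain representations. Since the definition of $\mathcal{A}^H$ is given slice-wise in the transform domain, the multiplication reversal identity should follow directly from the corresponding reversal identity for ordinary matrices, applied $P$ times in parallel.

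First I would check that the definition produces a well-defined tensor. The sequence $\{(\widetilde{\bm{A}}^p)^H\}_{p\in[P]}$ consists of $n_2\times n_1$ matrices, so $\text{TenView}(\cdot)$ returns a unique tensor $\widetilde{\mathcal{A}^H}\in\mathfrak{R}^{n_2\times n_1}$, and the invertibility of $\mathcal{L}$ yields a unique preimage $\mathcal{A}^H=\mathcal{L}^{-1}(\widetilde{\mathcal{A}^H})$. Next I would compute the left-hand side of the claimed identity in the transform domain: by the definition of the Hermitian transpose, the $p$th slices of $\mathcal{L}(\mathcal{A}^H)$ and $\mathcal{L}(\mathcal{B}^H)$ are $(\widetilde{\bm{A}}^p)^H$ and $(\widetilde{\bm{B}}^p)^H$ respectively; then Lemma~\ref{lemma:tensor_product_transform_domain_matrix_multiplication} gives the $p$th slice of $\mathcal{L}(\mathcal{A}^H\bullet\mathcal{B}^H)$ as $(\widetilde{\bm{A}}^p)^H(\widetilde{\bm{B}}^p)^H$, provided the inner dimensions $n_1$ agree (which is precisely the assumption $\mathcal{A}\in\mathfrak{R}^{n_1\times n_2}$ and $\mathcal{B}\in\mathfrak{R}^{n_3\times n_1}$ in the lemma).

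For the right-hand side, applying Lemma~\ref{lemma:tensor_product_transform_domain_matrix_multiplication} again yields the $p$th slice of $\mathcal{L}(\mathcal{B}\bullet\mathcal{A})$ as $\widetilde{\bm{B}}^p\widetilde{\bm{A}}^p$, and then the definition of the Hermitian transpose produces $(\widetilde{\bm{B}}^p\widetilde{\bm{A}}^p)^H$ as the $p$th slice of $\mathcal{L}((\mathcal{B}\bullet\mathcal{A})^H)$. The ordinary matrix identity $(\widetilde{\bm{B}}^p\widetilde{\bm{A}}^p)^H=(\widetilde{\bm{A}}^p)^H(\widetilde{\bm{B}}^p)^H$ holds for each $p\in[P]$, so all $P$ frontal slices of $\mathcal{L}(\mathcal{A}^H\bullet\mathcal{B}^H)$ and $\mathcal{L}((\mathcal{B}\bullet\mathcal{A})^H)$ coincide. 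Because $\mathcal{L}$ is a bijection, I can apply $\mathcal{L}^{-1}$ to conclude $\mathcal{A}^H\bullet\mathcal{B}^H=(\mathcal{B}\bullet\mathcal{A})^H$.

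The only mildly subtle step is keeping the index bookkeeping straight: one must make sure the flattening $p=(k-1)n_3+\ell$ in~(\ref{eq:matveiew}) is used consistently for all three tensors $\mathcal{A}$, $\mathcal{B}$, and $\mathcal{A}^H$, and that the ordinary matrix reversal rule is applied to matrices of matching sizes. I expect no genuine difficulty beyond this indexing, because once everything is phrased slice-wise via $\text{MatView}(\cdot)$, the argument is literally the classical Hermitian-reversal identity done $P$ times in parallel, as formalized by the block-diagonal representation in~(\ref{eq:parallel_multiplication_block_structure}).
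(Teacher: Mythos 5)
Your proposal is correct and follows essentially the same route as the paper: both reduce the identity to the transform domain via Lemma~\ref{lemma:tensor_product_transform_domain_matrix_multiplication} and apply the classical matrix Hermitian-reversal rule to the $P$ parallel slices (the paper merely packages the slices as one block-diagonal product rather than indexing them by $p$). Your added well-definedness check and the explicit appeal to the injectivity of $\mathcal{L}$ are harmless refinements of the same argument.
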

  \begin{proof}
  According to Lemma \ref{lemma:tensor_product_transform_domain_matrix_multiplication}, we have
  \begin{equation}
  \begin{split}
  \mathcal{B} \bullet \mathcal{A} ~\leftrightarrow&~ \text{bkldiag}(\text{MatView}(\widetilde{\mathcal{B}})) \cdot \text{bkldiag}(\text{MatView}(\widetilde{\mathcal{A}})),\\
  (\mathcal{B} \bullet \mathcal{A})^H ~\leftrightarrow&~
  \left( \text{bkldiag}(\text{MatView}(\widetilde{\mathcal{B}})) \cdot \text{bkldiag}(\text{MatView}(\widetilde{\mathcal{A}})) \right)^H \\
  &= \text{bkldiag}(\text{MatView}(\widetilde{\mathcal{A}})^H) \cdot \text{bkldiag}(\text{MatView}(\widetilde{\mathcal{B}})^H),\\
  \end{split}
  \end{equation}
  which gives us $\mathcal{A}^H \bullet \mathcal{B}^H$, according to the definition of the tensor hermintian transpose.
  \end{proof}

  \begin{definition}\label{def:orthogonal_tensor}
  (\textbf{Orthogonal tensor}) A tensor $\mathcal{Q} \in \mathfrak{R}^{n \times n}$ is orthogonal if
  \begin{equation}
  \mathcal{Q}^H \bullet \mathcal{Q} = \mathcal{Q} \bullet \mathcal{Q}^H = \mathcal{I}.
  \end{equation}
  \end{definition}

  \begin{definition}\label{def:symmetric_tensor}
  (\textbf{Symmetric tensor}) A symmetric tensor $\mathcal{A} \in \mathfrak{R}^{n \times n}$ is symmetric if $\mathcal{A}$ can be expressed as $\mathcal{A} = \mathcal{X} \bullet \mathcal{X}^{H}$ where $\mathcal{X} \in \mathfrak{R}^{n \times n}$.
  \end{definition}

  \begin{definition}\label{def:tensor_spectrum_nor}
  (\textbf{Tensor spectrum norm}) The spectrum norm of a tensor $\mathcal{A} \in \mathfrak{R}^{n_1 \times n_2}$ is defined in terms of the $F$-norm of a vector
  \begin{equation}\label{eq:matrix_spectrum_norm}
  ||\mathcal{A}|| = \max\limits_{\bm{x} \in \mathfrak{R}^{n_2 \times 1},~||\bm{x}||_F = 1} ||\mathcal{A} \bullet \bm{x}||_F.
  \end{equation}
  \end{definition}

\subsection{Discussions}
\label{sec:low_tubal_rank_tensor_model}
  We compare our discrete transform-based approach with the low-tubal-rank tensor model \cite{kilmer2008third,kilmer2011factorization,martin2013order}.

\subsubsection{\textbf{Several Operations}}

 For a fourth-order tensor $\mathcal{A} \in \mathbb{R}^{n_1 \times n_2 \times n_3 \times n_4}$, we use the notation $\bm{A}_{(i)} \in \mathbb{R}^{n_1 \times n_2 \times n_3}$ to denote the third-order tensor created by holding the $4$th index of $\mathcal{A}$ fixed at $i$, $i \in [n_4]$. We create the following block circulant representation
  \begin{equation}\label{eq:bcirc}
  \text{bcirc}(\mathcal{A}) = \left[
  \begin{array}{ccccc}
  \bm{A}_{(1)} & \bm{A}_{(n_4)} & \bm{A}_{(n_4 - 1)} & \cdots & \bm{A}_{(2)} \\
  \bm{A}_{(2)} & \bm{A}_{(1)} & \bm{A}_{(n_4)}     & \cdots & \bm{A}_{(3)} \\
  \vdots   & \ddots   & \ddots       & \ddots & \vdots \\
  \bm{A}_{(n_4)} & \bm{A}_{(n_4 - 1)} & \ddots   & \bm{A}_{(2)} & \bm{A}_{(1)}\\
  \end{array}
  \right],
  \end{equation}
  where $\text{bcirc}(\mathcal{A}) \in \mathbb{R}^{n_1n_4 \times n_2n_4 \times n_3}$.
  The $\text{unfold}(\cdot)$ command takes an $n_1 \times n_2 \times n_3 \times n_4$ tensor and returns an $n_1n_4 \times n_2 \times n_3$ block tensor as follows
  \begin{equation}\label{eq:unfold}
  \text{unfold}(\mathcal{A}) = \left[
  \begin{array}{ccccc}
  \bm{A}_{(1)} \\
  \bm{A}_{(2)} \\
  \vdots   \\
  \bm{A}_{(n_4)} \\
  \end{array}
  \right].
  \end{equation}
  The operation that takes $\text{unfold}(\mathcal{A})$ back to tensor $\mathcal{A}$ is the $\text{fold}(\cdot)$ command:
  \begin{equation}\label{eq:fold}
  \text{fold}(\text{unfold}(\mathcal{A})) = \mathcal{A}.
  \end{equation}

\subsubsection{\textbf{The t-product}}

  The key of the low-tubal-rank tensor model  \cite{kilmer2008third,kilmer2011factorization,martin2013order} is a newly introduced multiplication between two tensors, called the \textit{t-product}, that is defined according to an unfolding-folding scheme: 1) unfold the left tensor into a block circulant representation as in (\ref{eq:bcirc}) and the right tensor as a block tensor as in (\ref{eq:unfold}); 2) calculate the \textit{t-product} between those two tensors, by applying the first step again and then we reach the conventional matrix multiplication between two matrices; and 3) fold the result matrix back into a fourth-order tensor.

  \begin{definition}\label{def:t_product}
  (\textbf{t-product} for fourth-order tensors \cite{martin2013order})
  Given two fourth-order tensors $\mathcal{A} \in \mathbb{R}^{n_1 \times n' \times n_3 \times n_4}$ and $\mathcal{B} \in \mathbb{R}^{n' \times n_2 \times n_3 \times n_4}$, the \textit{t-product} $\mathcal{A} *_t \mathcal{B} \in \mathbb{R}^{n_1 \times n_2 \times n_3 \times n_4}$ is defined as follows
  \begin{equation}\label{eq:t_product_unfold_fold}
  \mathcal{A} *_t \mathcal{B} = \text{fold}(\text{bcirc}(\mathcal{A}) *_t \text{unfold}(\mathcal{B})).
  \end{equation}
  Note that (\ref{eq:t_product_unfold_fold}) is recursive because the right-hand side of (\ref{eq:t_product_unfold_fold}) involves a t-product between two third-order tensors, defined as follows
  \begin{equation}
  \begin{split}
  &\mathcal{C} \in \mathbb{R}^{n_1 \times p \times n_3},~\mathcal{D} \in \mathbb{R}^{p \times n_2 \times n_3},\\
  &\mathcal{C} *_t \mathcal{D} = \text{fold}(\text{bcirc}(\mathcal{C}) \cdot \text{unfold}(\mathcal{D})),
  \end{split}
  \end{equation}
  where the right-hand side is the conventional matrix multiplication.
  \end{definition}
  Note that each successive t-product operation involves tensors of one order less, and at the base level we have the conventional matrix multiplication. Therefore, this unfolding-folding computation scheme applies to other higher-order tensors \cite{martin2013order}.

\subsubsection{\textbf{Major drawback (computationally impractical)}}

  However, such a unfolding-folding scheme is computationally impractical for higher-order tensors, due to the following challenges. First, the unfolding operations in (\ref{eq:bcirc})(\ref{eq:unfold}) and the folding operation in (\ref{eq:fold}) are time-consuming. Secondly, the $\text{bcirc}(\cdot)$ operation in (\ref{eq:bcirc}) puts severe challenges in memory since it expands the size exponentially with the order, namely, $\text{bcirc}(\mathcal{A})$ leads to a matrix of size $n_1n_2n_3^2n_4^2$ for $\mathcal{A} \in \mathbb{R}^{n_1 \times n_2 \times n_3 \times n_4}$, and when $n_1 = n_2 = n_3 = n_4 = n$, $\text{bcirc}(\mathcal{A})$ is $n^{2}$ times larger than $\mathcal{A}$. Thirdly, (\ref{eq:t_product_unfold_fold}) requires $O(n_1 n'n_2 n_3^2n_4^2)$ complex-value multiplications.
  For example, given two $100 \times 100 \times 100 \times 100$ tensors $\mathcal{A}$ and $\mathcal{B}$, $\text{bcirc}(\mathcal{A})$ comsumes approximately $8$T Bytes memory where a double-type variable requies $8$ Bytes, while (\ref{eq:t_product_unfold_fold}) requires $10^{14}$ complex-value multiplications.

  For third-order tensors \cite{kilmer2008third,kilmer2011factorization}, the authors exploit the fact that the multiplication of a circulant matrix (determined by its first column vector) and a vector is equivalent to the discrete circular convolution of those two vectors. Thus, the t-product in (\ref{eq:t_product_unfold_fold}) between two third-order tensors is similar to the traditional matrix multiplication between two matrices whose entries are $1 \times 1 \times n$ tensors, where the scalar product is replaced by the discrete circular convolution. More specifically, the t-product in (\ref{eq:t_product_unfold_fold}) can be rewritten as follows
  \begin{definition}
  (\textbf{t-product} for third-order tensors \cite{kilmer2008third,kilmer2011factorization}) The t-product $\mathcal{C} = \mathcal{A} *_t \mathcal{B}$ of $\mathcal{A} \in \mathbb{R}^{n_1 \times n' \times n_3}$ and $\mathcal{B} \in \mathbb{R}^{n' \times n_2 \times n_3}$ is a tensor of size $n_1 \times n_2 \times n_3$,  the $(i,j)$th element of $\mathcal{C}$ is defined as follows
    \begin{equation}\label{eq:t_product_circular_convolution}
      \mathcal{C}(i,j, :) = \sum\limits_{p \in [n']} \mathcal{A}(i, p, :) \odot \mathcal{B}(p,j,:), ~~i \in [n_1],~j\in [n_2],
    \end{equation}
  where $\odot$ is the discrete circular convolution between two same-sized vectors.
  \end{definition}

  \begin{remark}\label{remark:circular_convolution_fourier}
  Let $\bm{F}_n$ be the Discrete Fourier Transform (DFT) matrix, given two vectors $\bm{x}, \bm{y} \in \mathbb{R}^n$, the circular convolution operation can be computed in the following way: $\bm{x} \odot \bm{y} \triangleq \bm{F}_n^{-1} \cdot ((\bm{F}_n \bm{x}) * (\bm{F}_n \bm{y}))$ where $*$ is the Hadmard product (elementwise product). Exploiting the FFT algorithm, the computation complexity of the circular convolution operation can be reduced from $O(n^2)$ to $O(n\log n)$, assuming $n$ is a power of $2$.
  \end{remark}

  \begin{remark}\label{remark:t_product_fourier_transform}
  (\textbf{Transform-based t-product} for third-order tensors) The t-product $\mathcal{C} = \mathcal{A} *_t \mathcal{B}$ of $\mathcal{A} \in \mathbb{R}^{n_1 \times n' \times n_3}$ and $\mathcal{B} \in \mathbb{R}^{n' \times n_2 \times n_3}$ is a tensor of size $n_1 \times n_2 \times n_3$,  the $(i,j)$th element of $\mathcal{C}$ is defined as follows
    \begin{equation}\label{eq:t_product_transform_based_multiplication}
      \mathcal{C}(i,j, :) = \sum\limits_{p \in [n']} \bm{F}_n^{-1} \cdot ((\bm{F}_n \cdot \mathcal{A}(i, p, :)) * (\bm{F}_n \cdot \mathcal{B}(p,j,:))), ~~i \in [n_1],~j\in [n_2].
    \end{equation}
  \end{remark}

  \begin{remark}\label{remark:t_product_transform_is_better}
  The above equations $(\ref{eq:t_product_unfold_fold})$ $(\ref{eq:t_product_circular_convolution})$ and $(\ref{eq:t_product_transform_based_multiplication})$ are three equivalent definitions of the t-product for third-order tensors. Computing $(\ref{eq:t_product_unfold_fold})$ and $(\ref{eq:t_product_circular_convolution})$ require the same amount of complex-value multiplications (i.e., $O(n_1n'n_2n_3^2)$). $(\ref{eq:t_product_unfold_fold})$ requires $O(n_1n'n_3^2 + n'n_2n_3 + n_1n_2n_3)$ memory while $(\ref{eq:t_product_circular_convolution})$ needs $O(n_1n'n_3 + n'n_2n_3 + n_1n_2n_3)$ memory. The transform-based approach (\ref{eq:t_product_transform_based_multiplication}) has advantages in both computation and memory, namely $O(n_1n'n_3 \log n_3 + n'n_2 n_3\log n_3 + n_1n'n_2n_3 + n_1n_2n_3 \log n_3)$ and $O(n_1n'n_3 + n'n_2n_3 + n_1n_2n_3)$, respectively.
  \end{remark}

\section{Fourth-order Tensor SVD Decomposition}
\label{sec:4D_tensor_decomposition}

   First, we show that in the new tensor space defined in Section \ref{sec:new_tensor_space}, any fourth-order tensor can be diagonalized.
   Based on this result, we define the eigenvalue-eigenvector pairs. The tensor-eigenvalue equation and tensor-eigenvector equation are no longer equivalent, which is essentially different from the conventional matrix case. This fact leads to stronger conditions to guarantee uniqueness of tensor SVD. Then, we define a new tensor SVD with a corresponding algorithm, and also a much simpler condition as a positive indicator when the algorithm output is a unique decomposition.

\subsection{Tensor Diagonalization and Eigenvalue-Eigenvector}

  Recall that eigenvalues of matrices are the roots of the characteristic polynomial $\textbf{det}(\bm{A} - \lambda \bm{I}) = 0$. The existence of eigenvalues implies the existence of corresponding eigenvectors $\bm{x} \in \mathbb{R}^{n \times 1}$.

  \begin{definition}
  (\textbf{Tensor determinant}) The determinant of a square tensor is computed similar to the determinant of a square matrix, except that we replace the real-value scalar multiplication by the tensor-scalar multiplication, e.g., $\mathcal{A} \in \mathfrak{R}^{2 \times 2}$,
  \begin{equation}
  \textbf{det}(\mathcal{A})=  \left|
  \begin{array}{cc}
  \bm{a} & \bm{b} \\
  \bm{c} & \bm{d} \\
  \end{array}
  \right| = \bm{a} \bullet \bm{d} - \bm{b} \bullet \bm{c}.
  \end{equation}
  \end{definition}

  Consider a square tensor $\mathcal{A} \in \mathfrak{R}^{n \times n}$ and $ \bm{\lambda} \in \mathfrak{R}$,  then the characteristic polynomial in our new tensor space becomes as follows
  \begin{equation}\label{eq:determinant_equation}
  \textbf{det}(\mathcal{A} - \bm{\lambda} \bullet \mathcal{I}) = \bm{0}.
  \end{equation}
  In the following, we then formally describe the tensor-eigenvalue equation where the tensor-eigenvalue and the tensor-eigenvector are defined.

  \begin{definition}\label{def:eigenvalue_eigenvector}
  Given a square tensor $\mathcal{A} \in \mathfrak{R}^{n \times n}$, we define the tensor-eigenvalue as $\bm{\lambda} \in \mathfrak{R}$ and the tensor-eigenvector as $\bm{x} \in \mathfrak{R}^{n \times 1}$ such that the following tensor-eigenvalue equation holds
  \begin{equation}\label{eq:eigenvalue_eigenvector}
  \mathcal{A} \bullet \bm{x} = \bm{\lambda} \bullet \bm{x}.
  \end{equation}
  \end{definition}

  \begin{remark}\label{remark:tensor_has_more_eigenvectors}
  In contrast to the matrix case, the determinant equation (\ref{eq:determinant_equation}) and the eigenvector equation (\ref{eq:eigenvalue_eigenvector}) are no longer equivalent in the space $(\mathfrak{R},~\bullet)$. E.g., in the transform domain, let us set $\widetilde{\bm{x}}_1$, $\widetilde{\lambda}_1$ to be an eigenpair of $\widetilde{\bm{A}}_1$, and $\widetilde{x}_p = 0$ for $p = 2:n_3n_4$, then any value of $\widetilde{\lambda}_p$ fits into (\ref{eq:eigenvalue_eigenvector}). However, only a subset of these solutions will also satisfy (\ref{eq:determinant_equation}). Therefore, a big difference from the conventional matrix space is that: a square tensor $\mathcal{A} \in \mathfrak{R}^{n \times n}$ may have more than $n$ pairs of tensor-eigenvalue and tensor-eigenvector satisfying (\ref{eq:eigenvalue_eigenvector}).
  \end{remark}

  Lemma \ref{lemma:tensor_diagonalization} shows the existence of a diagonalization equation. Although this lemma does not provide a method to diagonalize a tensor, it shows that (\ref{eq:eigenvalue_eigenvector}) can be extracted from the diagonalizing equation (\ref{eq:diagonalization_equation}).
  \begin{lemma}\label{lemma:tensor_diagonalization}
  (\textbf{Tensor diagonalization}) Given a square tensor $\mathcal{A} \in \mathfrak{R}^{n \times n}$, there exists a tensor $\mathcal{X} \in \mathfrak{R}^{n \times n}$ and an $\mathcal{L}$-diagonal tensor $\mathcal{D} \in \mathfrak{R}^{n \times n}$, such that
  \begin{equation}\label{eq:diagonalization_equation}
  \mathcal{A} \bullet \mathcal{X} = \mathcal{X} \bullet \mathcal{D}.
  \end{equation}
  \end{lemma}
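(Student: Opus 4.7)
The plan is to transfer the problem to the transform domain and reduce the tensor equation to a decoupled system of matrix eigendecomposition problems, one per frontal slice. By Lemma \ref{lemma:tensor_product_transform_domain_matrix_multiplication}, $\mathcal{A} \bullet \mathcal{X} = \mathcal{X} \bullet \mathcal{D}$ is equivalent to the $P = n_3 n_4$ parallel matrix equations $\widetilde{\bm{A}}^p \widetilde{\bm{X}}^p = \widetilde{\bm{X}}^p \widetilde{\bm{D}}^p$ for $p \in [P]$. Since the transform $\mathcal{L}$ acts only on the third and fourth modes and leaves the matrix indices $(i,j)$ untouched, $\mathcal{L}$-diagonality of $\mathcal{D}$ is preserved by $\mathcal{L}$: if $\mathcal{D}(i,j) = \bm{0}$ for $i \neq j$, then $\widetilde{\mathcal{D}}(i,j) = \mathcal{L}(\bm{0}) = \bm{0}$ for $i \neq j$, and hence each frontal slice $\widetilde{\bm{D}}^p$ is a diagonal matrix in the usual sense. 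So the task reduces to producing, for each $p$, an $n \times n$ matrix $\widetilde{\bm{X}}^p$ and a diagonal $\widetilde{\bm{D}}^p$ with $\widetilde{\bm{A}}^p \widetilde{\bm{X}}^p = \widetilde{\bm{X}}^p \widetilde{\bm{D}}^p$.

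Next, for each $p \in [P]$, I would invoke the fundamental theorem of algebra on the characteristic polynomial $\textbf{det}(\widetilde{\bm{A}}^p - \lambda \bm{I})$ to obtain $n$ roots (with multiplicity) over $\mathbb{C}$, each accompanied by at least one eigenvector. When $\widetilde{\bm{A}}^p$ is diagonalizable, form $\widetilde{\bm{X}}^p$ from $n$ independent eigenvectors and let $\widetilde{\bm{D}}^p$ collect the eigenvalues on its diagonal. When $\widetilde{\bm{A}}^p$ is defective, take any set of eigenvectors as columns of $\widetilde{\bm{X}}^p$ (padding with zero columns if fewer than $n$ are available) with $\widetilde{\bm{D}}^p$ carrying the matching eigenvalues (with any entries, say zero, paired to the zero columns). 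In either case the column-wise identity $\widetilde{\bm{A}}^p \widetilde{\bm{x}}^p_j = \widetilde{\lambda}^p_j \widetilde{\bm{x}}^p_j$ yields $\widetilde{\bm{A}}^p \widetilde{\bm{X}}^p = \widetilde{\bm{X}}^p \widetilde{\bm{D}}^p$.

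Finally, I would assemble the slices into $\widetilde{\mathcal{X}}$ and $\widetilde{\mathcal{D}}$ via $\text{TenView}(\cdot)$, apply the inverse transform to set $\mathcal{X} = \mathcal{L}^{-1}(\widetilde{\mathcal{X}})$ and $\mathcal{D} = \mathcal{L}^{-1}(\widetilde{\mathcal{D}})$, and observe that the diagonal structure in the transform domain lifts back to $\mathcal{L}$-diagonality of $\mathcal{D}$ by the same mode-wise argument. Applying Lemma \ref{lemma:tensor_product_transform_domain_matrix_multiplication} in reverse recovers $\mathcal{A} \bullet \mathcal{X} = \mathcal{X} \bullet \mathcal{D}$.

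The main obstacle is the possibility that one or more slices $\widetilde{\bm{A}}^p$ are defective, which would block a classical eigendecomposition. The key observation that saves the argument is that the lemma is purely existential and places no invertibility requirement on $\mathcal{X}$; this frees us to take rank-deficient $\widetilde{\bm{X}}^p$ built from a partial eigenvector system, sidestepping the defective case entirely. This is a strictly weaker statement than classical matrix diagonalization, which is exactly what is needed to extract the tensor-eigenvalue equation (\ref{eq:eigenvalue_eigenvector}) from (\ref{eq:diagonalization_equation}) in the sequel.
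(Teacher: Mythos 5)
Your proposal is correct and follows essentially the same route as the paper: pass to the transform domain, solve a classical eigenproblem on each slice $\widetilde{\bm{A}}^p$, and assemble the slice-wise eigenvectors and eigenvalues into $\mathcal{X}$ and an $\mathcal{L}$-diagonal $\mathcal{D}$ via the inverse transform. Your explicit handling of defective slices (padding with zero columns, exploiting that $\mathcal{X}$ need not be invertible) is a point the paper's proof glosses over, but it does not change the argument's substance.
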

  \begin{proof}
  Considering the $i$th tensor-column, we have the following equivalent forms
  \begin{equation}\label{eq:tensor_diagonalization_formula_1}
  \begin{split}
  [\mathcal{A} \bullet \mathcal{X}]_i &= \mathcal{A} \bullet \mathcal{X}_i, ~i\in[n].
  \end{split}
  \end{equation}
  In the transform-domain, we know that there exists an eigenvalue-eigenvector pair for each matrix $\widetilde{\bm{A}}^p,~p\in[n_3n_4]$, i.e., $\widetilde{\bm{A}}^p \widetilde{\bm{x}}_i^p = \lambda^p \widetilde{\bm{x}}_i^p$. Transforming back to the time-domain, we know that there exists at least one eigenvalue-eigenvector pair (as pointed in Remark \ref{remark:tensor_has_more_eigenvectors}), $\mathcal{X}_i = \mathcal{L}^{-1}( \text{fold}([\widetilde{\bm{x}}_i^1,...,\widetilde{\bm{x}}_i^{n_3n_4}]))$ and $\bm{\lambda}_{i} = \mathcal{L}^{-1}([\lambda^1,...,\lambda^{n_3n_4}])$,
  such that for $i \in [n]$,
  \begin{equation}
  \mathcal{A} \bullet \mathcal{X}_i = \bm{\lambda}_{i} \bullet \mathcal{X}_i.
  \end{equation}
  Putting $\bm{\lambda}_{i}$ in the diagonal of an $n \times n$ tensor $\mathcal{D} \in \mathfrak{R}^{n \times n}$, and $\mathcal{X}_i$ as tensor-columns of an $n \times n$ tensor $\mathcal{X}$, then we have
  \begin{equation}
  \begin{split}
  \mathcal{A} \bullet \mathcal{X}_i = \mathcal{D}_{ii} \bullet \mathcal{X}_i = \mathcal{X} \bullet \mathcal{D}_i,
  \end{split}
  \end{equation}
  Therefore, we obtain
  \begin{equation}
  \mathcal{A} \bullet \mathcal{X} = \mathcal{X} \bullet \mathcal{D}.
  \end{equation}
  \end{proof}

  \begin{lemma}\label{lemma:matrix_eigenvector_exists}
  \cite{golub2012matrix}
  Given a matrix $\bm{A} \in \mathbb{R}^{n_1 \times n_2}$, there exists a unit vector $\bm{z} \in \mathbb{R}^{n_2 \times 1}$ such that $\bm{A}^T \bm{A} \bm{z} = \mu^2 \bm{z}$ where $\mu = ||\bm{A}||$.
  \end{lemma}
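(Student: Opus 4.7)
The plan is to combine a compactness argument with a variational/first-order optimality argument. First I would invoke the definition of the spectral norm, $\mu = \max_{\|\bm{x}\|_2 = 1}\|\bm{A}\bm{x}\|_2$, and observe that the map $\bm{x} \mapsto \|\bm{A}\bm{x}\|_2^2 = \bm{x}^T \bm{A}^T \bm{A} \bm{x}$ is continuous on $\mathbb{R}^{n_2}$ while the unit sphere $S = \{\bm{x}\in\mathbb{R}^{n_2}: \|\bm{x}\|_2 = 1\}$ is compact. Hence the maximum is attained by some unit vector $\bm{z} \in S$, giving $\|\bm{A}\bm{z}\|_2 = \mu$, equivalently $\bm{z}^T \bm{A}^T \bm{A} \bm{z} = \mu^2$.

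Next I would extract the eigenvector relation from the optimality of $\bm{z}$. The cleanest route is Lagrange multipliers applied to
\[
\text{maximize } \bm{x}^T \bm{A}^T \bm{A} \bm{x} \quad \text{subject to } \bm{x}^T \bm{x} = 1.
\]
Forming the Lagrangian $L(\bm{x},\nu) = \bm{x}^T \bm{A}^T \bm{A} \bm{x} - \nu(\bm{x}^T\bm{x}-1)$ and setting $\nabla_{\bm{x}} L = \bm{0}$ yields $\bm{A}^T \bm{A}\bm{z} = \nu \bm{z}$ for some scalar $\nu$. Premultiplying by $\bm{z}^T$ and using $\|\bm{z}\|_2 = 1$ gives $\nu = \bm{z}^T \bm{A}^T \bm{A} \bm{z} = \mu^2$, so $\bm{A}^T\bm{A}\bm{z} = \mu^2\bm{z}$ as required.

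As a clean backup, the statement also follows from the spectral theorem: $\bm{A}^T\bm{A}$ is symmetric positive semi-definite, so it admits an orthonormal eigenbasis $\{\bm{v}_1,\dots,\bm{v}_{n_2}\}$ with real eigenvalues $\lambda_1 \geq \cdots \geq \lambda_{n_2} \geq 0$. Expanding any unit vector $\bm{x} = \sum_i c_i \bm{v}_i$ gives $\|\bm{A}\bm{x}\|_2^2 = \sum_i \lambda_i c_i^2 \leq \lambda_1$, with equality when $\bm{x} = \bm{v}_1$. Hence $\mu^2 = \lambda_1$ and $\bm{z} = \bm{v}_1$ satisfies $\bm{A}^T\bm{A}\bm{z} = \mu^2 \bm{z}$.

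I do not expect a real obstacle here since every step is standard finite-dimensional linear algebra; the only mildly delicate point is justifying that the maximizer exists (handled by compactness) and that the Lagrange multiplier must equal $\mu^2$ (handled by left-multiplying the first-order condition by $\bm{z}^T$). The spectral-theorem route bypasses the variational argument entirely and is the path I would most likely present in the paper for brevity.
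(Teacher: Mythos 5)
Your proof is correct. Note that the paper itself does not prove this lemma at all --- it is stated with a citation to Golub and Van Loan and used as a black box in the proof of Lemma \ref{lemma:tensor_eigenvector_exists} --- so there is no in-paper argument to compare against; your write-up simply supplies the standard justification that the paper delegates to the reference. Both of your routes are sound: the compactness-plus-Lagrange-multiplier argument is essentially the one in the cited textbook (attain the maximum of $\bm{x}^T\bm{A}^T\bm{A}\bm{x}$ on the unit sphere, then read off $\bm{A}^T\bm{A}\bm{z}=\nu\bm{z}$ from stationarity and identify $\nu=\mu^2$ by premultiplying with $\bm{z}^T$), while the spectral-theorem route is shorter and also immediately gives the identity $\mu^2=\lambda_{\max}(\bm{A}^T\bm{A})$. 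Either would serve as a complete proof; no gaps.
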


  \begin{lemma}\label{lemma:tensor_eigenvector_exists}
  Given a tensor $\mathcal{A} \in \mathfrak{R}^{n_1 \times n_2}$, then there exists a unit $F$-norm tensor $\bm{z} \in \mathfrak{R}^{n_2 \times 1}$ such that $\mathcal{A}^{H} \bullet \mathcal{A} \bullet \bm{z} = \mu^2 \bm{z}$ where $\mu = ||\mathcal{A}||$ where the tensor spectrum is defined in Definition \ref{def:tensor_spectrum_nor}.
  \end{lemma}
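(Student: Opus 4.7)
The plan is to reduce the statement to a collection of parallel matrix statements in the transform domain and then invoke Lemma \ref{lemma:matrix_eigenvector_exists}. By Lemma \ref{lemma:tensor_product_transform_domain_matrix_multiplication} and Lemma \ref{lemma:tensor_hermintian_transpose}, the Hermitian product $\mathcal{A}^H \bullet \mathcal{A}$ corresponds, in the transform domain, to the block-diagonal matrix with blocks $(\widetilde{\bm{A}}^p)^H \widetilde{\bm{A}}^p$ for $p \in [P]$ with $P = n_3 n_4$. Thus the equation $\mathcal{A}^H \bullet \mathcal{A} \bullet \bm{z} = \mu^2 \bm{z}$ is equivalent to the $P$ parallel matrix equations $(\widetilde{\bm{A}}^p)^H \widetilde{\bm{A}}^p \widetilde{\bm{z}}^p = \mu^2 \widetilde{\bm{z}}^p$ for $p \in [P]$.

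The first concrete step is to characterize $\mu = \|\mathcal{A}\|$ in terms of the blocks. Using the fact that the transform $\mathcal{L}$ preserves the $F$-norm (up to a harmless scaling), the optimization $\max_{\|\bm{x}\|_F = 1} \|\mathcal{A} \bullet \bm{x}\|_F$ unfolds into $\max \sum_p \|\widetilde{\bm{A}}^p \widetilde{\bm{x}}^p\|_F^2$ subject to $\sum_p \|\widetilde{\bm{x}}^p\|_F^2 = 1$. Bounding each summand by $\|\widetilde{\bm{A}}^p\|^2 \|\widetilde{\bm{x}}^p\|_F^2$ and concentrating all mass on the index $p^\star$ attaining $\max_p \|\widetilde{\bm{A}}^p\|$ gives
\begin{equation*}
\mu = \|\mathcal{A}\| = \max_{p \in [P]} \|\widetilde{\bm{A}}^p\|.
\end{equation*}
The maximizer is realized by setting $\widetilde{\bm{x}}^{p^\star}$ to the top right singular vector of $\widetilde{\bm{A}}^{p^\star}$ and $\widetilde{\bm{x}}^p = \bm{0}$ for $p \neq p^\star$.

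Next I would construct the eigen-tensor explicitly. Apply Lemma \ref{lemma:matrix_eigenvector_exists} to $\widetilde{\bm{A}}^{p^\star}$ to obtain a unit vector $\widetilde{\bm{z}}^{p^\star}$ with $(\widetilde{\bm{A}}^{p^\star})^H \widetilde{\bm{A}}^{p^\star} \widetilde{\bm{z}}^{p^\star} = \mu^2 \widetilde{\bm{z}}^{p^\star}$, and set $\widetilde{\bm{z}}^p = \bm{0}$ for every $p \neq p^\star$, so the remaining block equations $(\widetilde{\bm{A}}^p)^H \widetilde{\bm{A}}^p \widetilde{\bm{z}}^p = \mu^2 \widetilde{\bm{z}}^p$ are trivially satisfied. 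Transforming back via $\bm{z} = \mathcal{L}^{-1}(\widetilde{\bm{z}})$ yields a unit $F$-norm tensor satisfying $\mathcal{A}^H \bullet \mathcal{A} \bullet \bm{z} = \mu^2 \bm{z}$, which is exactly the claim.

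The main obstacle is conceptual rather than computational: because the tensor-scalar ring $\mathfrak{R}$ is not a field (Example \ref{exam:R_is_not_a_field}), one cannot simply assemble the per-block top singular vectors into a single eigen-tensor whose tensor-eigenvalue equals the (scalar) spectrum norm, since the block eigenvalues generally differ. The crucial trick, highlighted in Remark \ref{remark:tensor_has_more_eigenvectors}, is that a tensor-eigenvector is free to be zero on blocks where the scalar eigenvalue $\mu^2$ does not appear in the spectrum of $(\widetilde{\bm{A}}^p)^H \widetilde{\bm{A}}^p$; concentrating all energy on the single block $p^\star$ that actually attains the spectrum norm both keeps the eigenvalue consistent across blocks and preserves unit $F$-norm.
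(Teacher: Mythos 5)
Your proof follows the same transform-domain route as the paper's: represent $\mathcal{A}^{H}\bullet\mathcal{A}\bullet\bm{z}$ block-diagonally via Lemma \ref{lemma:tensor_product_transform_domain_matrix_multiplication} and Lemma \ref{lemma:tensor_hermintian_transpose}, and invoke Lemma \ref{lemma:matrix_eigenvector_exists} blockwise. The substantive difference is that you complete a step the paper leaves as a gap: the paper applies the matrix lemma to every block $p$, obtaining $(\widetilde{\bm{A}}^{p})^{H}\widetilde{\bm{A}}^{p}\widetilde{\bm{z}}^{p}=\mu_p^2\widetilde{\bm{z}}^{p}$ with $\mu_p=\|\widetilde{\bm{A}}^{p}\|$, and then simply asserts that $\mu=\|\mathcal{A}\|$, ignoring that the $\mu_p$ generally differ across blocks, so nonzero eigenvectors in every block cannot share one scalar $\mu$. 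Your identification $\|\mathcal{A}\|=\max_{p}\|\widetilde{\bm{A}}^{p}\|$ together with supporting $\widetilde{\bm{z}}$ only on the maximizing block $p^{\star}$ (zero elsewhere, so the remaining block equations hold vacuously) is exactly the repair needed, and it is consistent with the freedom described in Remark \ref{remark:tensor_has_more_eigenvectors}. Two caveats, both inherited from the paper rather than introduced by you: the identity $\|\bm{x}\|_F=\|\widetilde{\bm{x}}\|_F$ (hence $\|\mathcal{A}\|=\max_{p}\|\widetilde{\bm{A}}^{p}\|$) requires $\mathcal{L}$ to be unitary up to scaling, not merely invertible as in Definition \ref{def:tensor_scalar_multiplication}; and a $\widetilde{\bm{z}}$ supported on a single block need not be the transform of a \emph{real} tensor (for the DFT it violates conjugate symmetry), so the resulting $\bm{z}$ may be complex-valued --- the paper's own proof has the same defect since it, too, chooses the $\widetilde{\bm{z}}^{p}$ independently per block.
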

  \begin{proof}
  We can represent $\mathcal{A}^{H} \bullet \mathcal{A} \bullet \bm{z}$ in the block diagonal matrix form (\ref{eq:blkdiagonal_presentation_multiplication_two_matrix_sequence}) as follows
  \begin{equation}
  \begin{split}
    \mathcal{A}^{H} \bullet \mathcal{A} \bullet \bm{z} ~&\leftrightarrow~ \text{blkdiag}(\text{MatView}(\widetilde{\mathcal{A}^H})) \cdot \text{blkdiag}(\text{MatView}(\widetilde{\mathcal{A}})) \cdot \text{blkdiag}(\text{MatView}(\widetilde{\bm{z}})),\\
    &\leftrightarrow~(\widetilde{\bm{A}^H})^p \widetilde{\bm{A}}^p \widetilde{\bm{z}}^p = (\widetilde{\bm{A}^p})^H \widetilde{\bm{A}}^p \widetilde{\bm{z}}^p,~p \in [P],
  \end{split}
  \end{equation}
  where the last equation follows from Lemma \ref{lemma:tensor_hermintian_transpose}.
  Note that Lemma \ref{lemma:matrix_eigenvector_exists} proved the existence of a unite vector $\widetilde{\bm{z}}^p$ such that
  \begin{equation}
  \begin{split}
  (\widetilde{\bm{A}^p})^H \widetilde{\bm{A}}^p \widetilde{\bm{z}}^p = \mu^2 \widetilde{\bm{z}}^p,~~\text{with}~\mu = ||\widetilde{\bm{A}}^p||,
  \end{split}
  \end{equation}
  where we verify that $\mu = ||\mathcal{A}||$ according to the tensor spectrum norm in Definition \ref{def:tensor_spectrum_nor}.
  \end{proof}

\subsection{Tensor $\mathcal{L}$-SVD}

  We first consider the eigendecomposition for symmetric tensors, because symmetry guarantees that there is an orthogonal basis for eigenvectors. Then based on Theorem \ref{theorem:symmetric_tensor_eigendecompsotion}, we prove Theorem \ref{theorem:tensor_L_SVD}.

  \begin{theorem}\label{theorem:symmetric_tensor_eigendecompsotion}
  Let $\mathcal{A} \in \mathfrak{R}^{n \times n}$ be a symmetric tensor. Then, there exists an orthogonal $\mathcal{Q}$ such that $\mathcal{A}$ can be expresses as the following diagonal form
  \begin{equation}
  \mathcal{Q}^H \bullet \mathcal{A} \bullet \mathcal{Q} = \mathcal{S} = \text{diag}(\bm{\lambda}_1, ..., \bm{\lambda}_n).
  \end{equation}
  where $\mathcal{A} \bullet \mathcal{Q}_i = \bm{\lambda}_i \bullet \mathcal{Q}_i$ for $i \in [n]$, and the tensor-columns $\mathcal{Q}_i$ can be permutated so that the tensor-eigenvlues $\bm{\lambda}_i$ appear correspondingly along the diagonal of $\mathcal{S}$.
  \end{theorem}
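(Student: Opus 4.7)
The plan is to reduce the tensor eigendecomposition to $P = n_3 n_4$ independent classical matrix eigendecompositions via the transform domain, and then transform back. Specifically, by Lemma \ref{lemma:tensor_product_transform_domain_matrix_multiplication}, the $\mathcal{L}$-product corresponds to block-diagonal matrix multiplication after applying $\mathcal{L}$ and $\text{MatView}$, so a symmetric $\mathcal{A} = \mathcal{X} \bullet \mathcal{X}^H$ translates slice-wise (using Lemma \ref{lemma:tensor_hermintian_transpose}) into $\widetilde{\bm{A}}^p = \widetilde{\bm{X}}^p (\widetilde{\bm{X}}^p)^H$ for $p \in [P]$. Each such block is therefore Hermitian positive semidefinite, and the classical spectral theorem supplies a unitary $\widetilde{\bm{Q}}^p$ and a real diagonal $\widetilde{\bm{S}}^p$ with $(\widetilde{\bm{Q}}^p)^H \widetilde{\bm{A}}^p \widetilde{\bm{Q}}^p = \widetilde{\bm{S}}^p$.

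Carrying the plan out, I would proceed in the following order. First, assemble the block-diagonal matrices $\text{blkdiag}(\text{MatView}(\widetilde{\mathcal{Q}}))$ and $\text{blkdiag}(\text{MatView}(\widetilde{\mathcal{S}}))$ from the per-slice unitaries and diagonals. Second, invert $\text{MatView}$ via $\text{TenView}$ and apply $\mathcal{L}^{-1}$ to obtain tensors $\mathcal{Q}, \mathcal{S} \in \mathfrak{R}^{n \times n}$. Third, verify orthogonality of $\mathcal{Q}$: by Lemma \ref{lemma:tensor_product_transform_domain_matrix_multiplication} and the block-diagonal form of identity, $\mathcal{Q}^H \bullet \mathcal{Q}$ corresponds slice-wise to $(\widetilde{\bm{Q}}^p)^H \widetilde{\bm{Q}}^p = \bm{I}$, so $\mathcal{Q}^H \bullet \mathcal{Q} = \mathcal{I}$, and similarly for $\mathcal{Q} \bullet \mathcal{Q}^H$. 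Fourth, verify $\mathcal{Q}^H \bullet \mathcal{A} \bullet \mathcal{Q} = \mathcal{S}$ by translating both sides into the transform-domain block-diagonal product and invoking the slice-wise diagonalizations. Fifth, observe that $\mathcal{S}$ has zero tensor-scalars off the diagonal because each $\widetilde{\bm{S}}^p$ is diagonal, so $\mathcal{S}$ is $\mathcal{L}$-diagonal in the sense of Definition \ref{def:tensor_L_diagonal}; writing $\mathcal{S} = \text{diag}(\bm{\lambda}_1, \ldots, \bm{\lambda}_n)$ with $\bm{\lambda}_i = \mathcal{S}(i,i)$ and reading off the $i$-th tensor-column of $\mathcal{A} \bullet \mathcal{Q} = \mathcal{Q} \bullet \mathcal{S}$ then gives the claimed tensor-eigenvalue equation $\mathcal{A} \bullet \mathcal{Q}_i = \bm{\lambda}_i \bullet \mathcal{Q}_i$. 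Finally, the permutation statement follows from the freedom to permute eigenpairs within each slice before reassembling.

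The main obstacle I anticipate is ensuring that $\mathcal{Q}$ and $\mathcal{S}$ are well-defined objects of the tensor space and that the per-slice freedom in choosing eigendecompositions does not break the reconstruction: each $\widetilde{\bm{A}}^p$ admits non-unique unitaries whenever eigenvalues repeat, so the assembly is only defined up to within-slice unitary changes on eigenspaces and a per-slice permutation. For the inverse transform $\mathcal{L}^{-1}$ to return a tensor in $\mathfrak{R}^{n \times n}$ consistent with the original real-valued setting, one must choose the per-slice eigendecompositions to respect whatever conjugacy structure $\mathcal{L}$ imposes on real inputs (for example, conjugate-symmetric pairs of slices when $\mathcal{L}$ is the DFT); once such a consistent choice is fixed, the rest of the argument is a direct slice-wise verification via Lemmas \ref{lemma:tensor_product_transform_domain_matrix_multiplication} and \ref{lemma:tensor_hermintian_transpose}. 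A secondary point is that the tensor-eigenvalues $\bm{\lambda}_i$ inherit from the real diagonal blocks $\widetilde{\bm{S}}^p$, so the ordering claim amounts to choosing, coherently across slices, a permutation of eigenpairs, which is always available.
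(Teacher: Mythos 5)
Your proof is correct, but it takes a genuinely different route from the paper. You reduce the statement to $P=n_3n_4$ independent applications of the classical spectral theorem in the transform domain and then reassemble via $\text{TenView}$ and $\mathcal{L}^{-1}$; the paper instead works intrinsically in $\mathfrak{R}^{n\times n}$, mimicking the textbook deflation argument: it invokes Lemma \ref{lemma:tensor_eigenvector_exists} to produce one unit tensor-eigenvector $\bm{x}$, asserts an orthogonal completion $\mathcal{P}_1$ with $\mathcal{P}_1^H\bullet\bm{x}=\mathcal{I}_1$, uses symmetry to obtain a block form $\left[\begin{smallmatrix}\bm{\lambda}_1 & \bm{0}\\ \bm{0} & \mathcal{A}(2\!:\!n,2\!:\!n)\end{smallmatrix}\right]$, and finishes by induction on $n$. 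Your slice-wise route buys two things: it sidesteps the orthogonal-completion step, which the paper leaves unjustified and which is delicate here because $\mathfrak{R}$ has zero divisors (Example \ref{exam:R_is_not_a_field}) -- if some transform-domain slice $\widetilde{\bm{x}}^p$ vanishes, completing $\bm{x}$ to an orthogonal tensor fails -- and it makes the per-slice Hermitian positive-semidefiniteness of $\widetilde{\bm{A}}^p=\widetilde{\bm{X}}^p(\widetilde{\bm{X}}^p)^H$ explicit, which the paper's induction quietly relies on to keep the deflated block symmetric. The paper's route buys an argument that is intrinsic to the tensor space and does not presuppose the transform-domain machinery. The conjugate-symmetry caveat you raise (so that $\mathcal{L}^{-1}$ returns a real tensor) is a genuine subtlety that the paper's proof also glosses over, so flagging it is a strength rather than a defect of your write-up.
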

  \begin{proof}
  Lemma \ref{lemma:tensor_eigenvector_exists} says that, given $\bm{\lambda}_1$ and tensor $\mathcal{A} \in \mathfrak{R}^{n \times n}$, there exists a unit $F$-norm tensor-eigenvector such that $\mathcal{A} \bullet \bm{x} = \bm{\lambda}_1 \bullet \bm{x}$. There exists a tensor $\mathcal{P}_1 \in \mathfrak{R}^{n \times n}$ such that $\mathcal{P}_1^H \bullet \bm{x} = \mathcal{I}_1$ (since $\bm{x}$ behaves as a ``vector" in the space $\mathfrak{R}^{n \times n}$ and is essentially rank-$1$, and $\mathcal{I}_1$ is the first tensor-column of the identity tensor $\mathcal{I}$). Therefore, from $\mathcal{A} \bullet \bm{x} = \bm{\lambda}_1 \bullet \bm{x}$ we get that $(\mathcal{P}_1^H \bullet \mathcal{A} \bullet \mathcal{P}_1) \bullet \mathcal{I}_1 = \bm{\lambda}_1 \bullet \mathcal{I}_1$, namely the first tensor-column of $\mathcal{P}_1^H \bullet \mathcal{A} \bullet \mathcal{P}_1$ is a tensor-scalar multiplication of $\bm{\lambda}_1$ and $\mathcal{I}_1$.
  Since $\mathcal{A}$ is symmetric, we then have the form
  \begin{equation}
  \mathcal{P}_1^H \bullet \mathcal{A} \bullet \mathcal{P}_1 = \left[
  \begin{array}{cc}
  \bm{\lambda}_1 & \bm{0} \\
  \bm{0} & \mathcal{A}(2:n, 2:n)
  \end{array}
  \right],
  \end{equation}
  where $\mathcal{A}(2:n, 2:n) \in \mathfrak{R}^{(n-1) \times (n-1)}$ is also symmetric.

  Next, we apply the induction method by assuming that there is an orthogonal tensor $\mathcal{Q}(2:n,2:n) \in \mathfrak{R}^{(n-1) \times (n-1)}$ such that $\mathcal{Q}(2:n,2:n)^H \bullet \mathcal{A}(2:n, 2:n) \bullet \mathcal{Q}(2:n,2:n) = \text{diag}(\bm{\lambda}_2,...,\bm{\lambda}_n)$. Let us set
  \begin{equation}
  \mathcal{Q} = \mathcal{P}_1 \left[
  \begin{array}{cc}
  e & \bm{0} \\
  \bm{0} & \mathcal{Q}(2:n,2:n)
  \end{array}
  \right], ~~~\mathcal{S} = \left[
  \begin{array}{cc}
  \bm{\lambda}_1 & \bm{0} \\
  \bm{0} & \text{diag}(\bm{\lambda}_2,...,\bm{\lambda}_n)
  \end{array}
  \right],
  \end{equation}
  and then comparing the tensor-columns of the equation $\mathcal{A} \bullet \mathcal{Q} = \mathcal{Q} \bullet \mathcal{S}$, then the theorem is proved.
  \end{proof}

  \begin{theorem}\label{theorem:tensor_L_SVD}
  (\textbf{$\mathcal{L}$-SVD}) For a tensor $\mathcal{A} \in \mathfrak{R}^{n_1 \times n_2}$, the $\mathcal{L}$-SVD of $\mathcal{A}$ is given by
  \begin{equation}\label{eq:tensor_L_SVD}
  \mathcal{A} = \mathcal{U} \bullet \mathcal{S} \bullet \mathcal{V}^H,
  \end{equation}
  where the orthogonal tensors $\mathcal{U} \in \mathfrak{R}^{n_1 \times n_1}$ and $\mathcal{V} \in \mathfrak{R}^{n_2 \times n_2}$ are the tensor-column subspace and tensor-row subspace, respectively, and $\mathcal{S} \in \mathfrak{R}^{n_1 \times n_2}$ is a rectangular $\mathcal{L}$-diagonal tensor.
  \end{theorem}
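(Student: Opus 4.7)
The plan is to exploit Lemma~\ref{lemma:tensor_product_transform_domain_matrix_multiplication}, which says that any $\mathcal{L}$-product becomes a block-diagonal matrix product in the transform domain, and thereby reduce the problem to $P = n_3 n_4$ independent classical matrix SVDs. Because the heavy algebraic content of SVD is already available for ordinary matrices, the task reduces to (a) performing a matrix SVD on every frontal slice $\widetilde{\bm{A}}^p$, (b) reassembling the resulting factors into original-domain tensors, and (c) checking that the reassembled triple $(\mathcal{U}, \mathcal{S}, \mathcal{V})$ inherits orthogonality and $\mathcal{L}$-diagonality from its slice-wise counterparts.

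Concretely, I would first compute $\widetilde{\mathcal{A}} = \mathcal{L}(\mathcal{A})$ and extract $\text{MatView}(\widetilde{\mathcal{A}}) = \{\widetilde{\bm{A}}^1, \ldots, \widetilde{\bm{A}}^P\}$. For each $p \in [P]$, I invoke the classical matrix SVD to obtain $\widetilde{\bm{A}}^p = \widetilde{\bm{U}}^p \widetilde{\bm{S}}^p (\widetilde{\bm{V}}^p)^H$ with unitary $\widetilde{\bm{U}}^p,~\widetilde{\bm{V}}^p$ and rectangular diagonal $\widetilde{\bm{S}}^p$ of nonnegative entries, and then TenView-assemble these slices into transform-domain tensors $\widetilde{\mathcal{U}},~\widetilde{\mathcal{S}},~\widetilde{\mathcal{V}}$. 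Finally, I define the candidate factors in the original domain by $\mathcal{U} = \mathcal{L}^{-1}(\widetilde{\mathcal{U}})$, $\mathcal{S} = \mathcal{L}^{-1}(\widetilde{\mathcal{S}})$, and $\mathcal{V} = \mathcal{L}^{-1}(\widetilde{\mathcal{V}})$. Using Lemma~\ref{lemma:tensor_product_transform_domain_matrix_multiplication} and Lemma~\ref{lemma:tensor_hermintian_transpose}, the product $\mathcal{U} \bullet \mathcal{S} \bullet \mathcal{V}^H$ corresponds in the transform domain to the block-diagonal matrix whose $p$-th block is $\widetilde{\bm{U}}^p \widetilde{\bm{S}}^p (\widetilde{\bm{V}}^p)^H = \widetilde{\bm{A}}^p$, so inverse-transforming recovers $\mathcal{A}$; the orthogonality $\mathcal{U}^H \bullet \mathcal{U} = \mathcal{I}$ follows because $(\widetilde{\bm{U}}^p)^H \widetilde{\bm{U}}^p = \bm{I}_{n_1}$ for every $p$ and the transform-domain representation of $\mathcal{I}$ (Lemma~\ref{lemma:tensor_identity}) is precisely the block-identity, since $\mathcal{L}(e) = \bm{1}$; and the $\mathcal{L}$-diagonality of $\mathcal{S}$ is immediate from the matrix-diagonality of each $\widetilde{\bm{S}}^p$ together with the linearity of $\mathcal{L}^{-1}$.

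The hard part will be the bookkeeping between the two domains, specifically verifying carefully that (i) the original-domain identity tensor $\mathcal{I}$ maps to the block-identity under MatView after transform, (ii) Hermitian transpose commutes with $\mathcal{L}$ in the slice-by-slice sense of Lemma~\ref{lemma:tensor_hermintian_transpose}, and (iii) $\mathcal{L}^{-1}$ preserves the zero-off-diagonal pattern of $\widetilde{\mathcal{S}}$. None of these is deep, but each must be checked explicitly. An alternative, more intrinsic route would be to mimic the classical proof by applying Theorem~\ref{theorem:symmetric_tensor_eigendecompsotion} to the symmetric tensor $\mathcal{A}^H \bullet \mathcal{A}$ and then setting $\mathcal{U}_i = (\mathcal{A} \bullet \mathcal{V}_i) \bullet \bm{\sigma}_i^{-1}$ with $\bm{\sigma}_i = \sqrt{\bm{\lambda}_i}$; however, since $\mathfrak{R}$ is not a field (Example~\ref{exam:R_is_not_a_field}), the singular tensor-scalars may fail to be invertible whenever their transform-domain entries vanish, forcing extra case analysis that the transform-domain approach avoids entirely.
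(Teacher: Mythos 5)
Your proposal is correct, but it is not the route the paper takes for this theorem. The paper proves Theorem~\ref{theorem:tensor_L_SVD} intrinsically, mimicking the classical matrix argument: it applies the symmetric eigendecomposition of Theorem~\ref{theorem:symmetric_tensor_eigendecompsotion} to $\mathcal{A}^H \bullet \mathcal{A}$, sets $\bm{\sigma}_j = \sqrt{\bm{\lambda}_j}$ and $\bm{q}_j = \mathcal{A} \bullet \bm{x}_j \bullet \bm{\sigma}_j^{-1}$, checks $\bm{q}_i^H \bullet \bm{q}_j = \bm{\delta}_{ij}$, and concludes $\mathcal{U}^H \bullet \mathcal{A} \bullet \mathcal{V} = \mathcal{S}$. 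Your transform-domain argument --- slice-wise matrix SVD, reassembly via Lemma~\ref{lemma:tensor_product_transform_domain_matrix_multiplication}, and the three bookkeeping checks on $\mathcal{I}$, the Hermitian transpose, and the diagonal pattern --- is instead essentially the paper's proof of the \emph{algorithmic} result, Theorem~\ref{theorem:L_SVD}, so you have in effect proved existence by proving correctness of Alg.~\ref{alg:L_SVD}. Each route buys something: your version sidesteps the invertibility of $\bm{\sigma}_j^{-1}$ entirely (a point the paper's proof does gloss over when singular values vanish in some transform slices, handling $j > r$ only at the final display), and it is shorter because the hard analysis is delegated to the classical SVD; the paper's version, on the other hand, establishes the structural claim implicit in the theorem statement --- that the tensor-columns of $\mathcal{U}$ and $\mathcal{V}$ are tensor-eigenvectors of $\mathcal{A} \bullet \mathcal{A}^H$ and $\mathcal{A}^H \bullet \mathcal{A}$ and that the diagonal of $\mathcal{S}$ consists of square roots of the corresponding tensor-eigenvalues --- which your construction delivers only after unwinding the slice-wise SVDs. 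Your closing observation about $\mathfrak{R}$ not being a field is a fair criticism of the paper's own argument, not a defect of yours.
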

  \begin{proof}
  We say that (\ref{eq:tensor_L_SVD}) gives the right SVD decomposition of a tensor if the following aspects hold (similar to the matrix SVD \cite{golub2012matrix}): 1) the tensor-columns of orthogonal tensors $\mathcal{U} \in \mathfrak{R}^{n_1 \times n_1}$ and $\mathcal{V} \in \mathfrak{R}^{n_2 \times n_2}$ correspond to the tensor-eigenvectors of $\mathcal{A}^H \bullet \mathcal{A}$ and $\mathcal{A} \bullet \mathcal{A}^H$, respectively, and 2) the tensor-eigenvalues on the diagonal of $\mathcal{S} \in \mathfrak{R}^{n_1 \times n_2}$ are the square roots of the nonzero tensor-eigenvalues of both  $\mathcal{A}^H \bullet \mathcal{A}$ and $\mathcal{A} \bullet \mathcal{A}^H$.

  We consider a tensor constructed as $\mathcal{A}^H \bullet \mathcal{A}$, which is a symmetric tensor according to Definition \ref{def:symmetric_tensor}. Theorem \ref{theorem:symmetric_tensor_eigendecompsotion} states that $\mathcal{A}^H \bullet \mathcal{A}$ has a complete set of tensor-eigenvectors that correspond to a basis of the tensor-column subspace, i.e., $\mathcal{A}^H \bullet \mathcal{A} \bullet \bm{x}_j  = \bm{\lambda}_j \bullet \bm{x}_j$. Therefore, we have
  \begin{equation}
  \bm{x}_i^H \bullet (\mathcal{A}^H \bullet \mathcal{A} \bullet \bm{x}_j) = \bm{\lambda}_j \bullet (\bm{x}_i^H \bullet \bm{x}_j) = \bm{\lambda}_j \bullet \bm{\delta}_{ij},
  \end{equation}
  where $\bm{\delta}_{ij}$ is the extended Kronecker delta such that if $i=j$, $\bm{\delta}_{ij} = e$.

  Define $\bm{\sigma}_j = \sqrt{\bm{\lambda}_j}$ and $\bm{q}_j = \mathcal{A} \bullet \bm{x}_j \bullet \bm{\sigma}_j^{-1}$, then we have
  \begin{equation}
  \bm{q}_i^H \bullet \bm{q}_j = (\bm{\sigma}_i^{-1} \bullet \bm{x}_i^H \bullet \mathcal{A}^H) \bullet \mathcal{A} \bullet \bm{x}_j \bullet \bm{\sigma}_j^{-1} = \bm{\delta}_{ij}.
  \end{equation}
  We organize $\bm{x}_j$'s into $\mathcal{V}$ and $\bm{q}_j$'s into $\mathcal{U}$, then
  \begin{equation}
  (\mathcal{U}^H \bullet \mathcal{A} \bullet \mathcal{V})_{ij} = \bm{q}_i^H \bullet \mathcal{A} \bullet \bm{x}_j = \left\{
  \begin{array}{cc}
  \bm{0},~~~~~~~~~~~~~&\text{if}~j > r,\\
  \bm{\sigma_j}\bullet \bm{q}_i^H \bullet \bm{q}_j = \bm{\sigma}_j \bullet \bm{\delta}_{ij}, ~&\text{if}~j \leq r,
  \end{array}
  \right.
  \end{equation}
  which is $\mathcal{U}^H \bullet \mathcal{A} \bullet \mathcal{V} = \mathcal{S}$. Note that the tensor-eigenvalues on the diagonal $\bm{\sigma}_j$ are the square roots of the nonzero tensor-eigenvalues $\bm{\lambda}_j$ of $\mathcal{A}^H \bullet \mathcal{A}$.

  For the case of $\mathcal{A} \bullet \mathcal{A}^H$, the verification procedure is the same. Therefore, we get  $\mathcal{A} = \mathcal{U} \bullet \mathcal{S} \bullet \mathcal{V}^H.$
  \end{proof}

  \begin{definition}
  (\textbf{Tensor $\mathcal{L}$-rank}) The tensor $\mathcal{L}$-rank of $\mathcal{A} \in \mathfrak{R}^{n_1 \times n_2}$, denoted as $\mathcal{L}$-rank, is defined to be the number of non-zero tensor-scalars of $\mathcal{S}$ in the $\mathcal{L}$-SVD factorization (\ref{eq:tensor_L_SVD}).
  \end{definition}

\subsection{Algorithm for Computing $\mathcal{L}$-SVD}

 \begin{algorithm}[t]
  \caption{$\text{$\mathcal{L}$-SVD}(\mathcal{A})$}
  \begin{algorithmic}
   \label{alg:L_SVD}
  \STATE \textbf{Input: } $\mathcal{A} \in \mathfrak{R}^{n_1 \times n_2}$
  \STATE $\widetilde{\mathcal{A}} = \mathcal{L}(\mathcal{A})$,
  \STATE For the matrix sequence $\text{MatView}(\widetilde{\mathcal{A}}) = \{\bm{\widetilde{A}}^1,...,\bm{\widetilde{A}}^P\},~~P=n_3n_4$;
  \FOR{$p = 1~\text{to}~P$}
  	\STATE $[\widetilde{\mathcal{U}}^{p}, \widetilde{\mathcal{S}}^{p}, \widetilde{\mathcal{V}}^{p}] = {\tt SVD} (\widetilde{\bm{A}}^{p})$;
  \ENDFOR
  \STATE $\mathcal{U} \leftarrow \mathcal{L}^{-1}(\widetilde{\mathcal{U}})$, $\mathcal{S} \leftarrow \mathcal{L}^{-1}(\widetilde{\mathcal{S}})$, $\mathcal{V}^{H} \leftarrow \mathcal{L}^{-1}(\widetilde{\mathcal{V}})$.
  \end{algorithmic}
\end{algorithm}

  Now we provide an algorithm to compute the $\mathcal{L}$-SVD, as shown in Alg. \ref{alg:L_SVD}. The basic flow of Alg. \ref{alg:L_SVD} consists of three steps: 1) perform a multidimensional discrete transform $\mathcal{\mathcal{A}}$ to get $\widetilde{\mathcal{A}}$ and forming the matrix view $\text{MatView}(\widetilde{\mathcal{A}})$; 2) execute the conventional matrix SVD on each element of the matrix sequence $\text{MatView}(\widetilde{\mathcal{A}})$; and 3) form the factors $\widetilde{\mathcal{U}},~\widetilde{\mathcal{S}}$ and $\widetilde{\mathcal{V}}$, and then transform them back to the time-domain.

  \begin{theorem}\label{theorem:L_SVD}
   Given $\mathcal{A} \in \mathbb{R}^{n_1 \times n_2 \times n_3}$, then Alg. \ref{alg:L_SVD} outputs an $\mathcal{L}$-SVD as follows
   \begin{equation}\label{eq:tensor_L_SVD_representation}
   \mathcal{A} = \mathcal{U} ~\bullet~ \mathcal{S} ~\bullet~ \mathcal{V}^{H},
   \end{equation}
   where $\mathcal{U} \in \mathfrak{R}^{n_1 \times n_1}$ and $\mathcal{V} \in \mathbb{R}^{n_2 \times n_2}$ are orthogonal respectively, and $\mathcal{S} \in \mathbb{R}^{n_1 \times n_2}$ is $\mathcal{L}$-diagonal.
   \end{theorem}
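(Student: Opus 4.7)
The strategy is to push the entire construction into the transform domain, where the statement reduces to $P = n_3 n_4$ independent invocations of the classical matrix SVD, and then to pull the result back to the time domain using Lemma \ref{lemma:tensor_product_transform_domain_matrix_multiplication} and Lemma \ref{lemma:tensor_hermintian_transpose}. First I would apply $\mathcal{L}$ to obtain $\widetilde{\mathcal{A}}$ and form the sequence $\text{MatView}(\widetilde{\mathcal{A}}) = \{\widetilde{\bm{A}}^1,\dots,\widetilde{\bm{A}}^P\}$, exactly as done in Algorithm \ref{alg:L_SVD}. The classical matrix SVD applied to each $\widetilde{\bm{A}}^p$ produces orthogonal $\widetilde{\bm{U}}^p$, diagonal $\widetilde{\bm{S}}^p$, and orthogonal $\widetilde{\bm{V}}^p$ satisfying $\widetilde{\bm{A}}^p = \widetilde{\bm{U}}^p \widetilde{\bm{S}}^p (\widetilde{\bm{V}}^p)^H$ for every $p \in [P]$.

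Next I would stack these factors block-diagonally and write
\begin{equation*}
\text{blkdiag}(\text{MatView}(\widetilde{\mathcal{A}})) = \text{blkdiag}(\text{MatView}(\widetilde{\mathcal{U}})) \cdot \text{blkdiag}(\text{MatView}(\widetilde{\mathcal{S}})) \cdot \text{blkdiag}(\text{MatView}(\widetilde{\mathcal{V}})^H),
\end{equation*}
and invoke Lemma \ref{lemma:tensor_product_transform_domain_matrix_multiplication} to recognise the right-hand side as the transform-domain representation of an $\mathcal{L}$-product of three tensors. Lemma \ref{lemma:tensor_hermintian_transpose} lets me identify $\text{blkdiag}(\text{MatView}(\widetilde{\mathcal{V}})^H)$ with $\text{blkdiag}(\text{MatView}(\widetilde{\mathcal{V}^H}))$, so this equality reads $\widetilde{\mathcal{A}} = \widetilde{\mathcal{U} \bullet \mathcal{S} \bullet \mathcal{V}^H}$. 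Applying the bijective, tensor-scalar-wise inverse transform $\mathcal{L}^{-1}$ to both sides yields the desired factorisation (\ref{eq:tensor_L_SVD_representation}).

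It then remains to verify the structural properties of the three factors. Orthogonality of $\mathcal{U}$ follows from $(\widetilde{\bm{U}}^p)^H \widetilde{\bm{U}}^p = \bm{I}$ for every $p$: assembling these identities into block-diagonal form, using Lemma \ref{lemma:tensor_product_transform_domain_matrix_multiplication} backwards together with Lemma \ref{lemma:tensor_hermintian_transpose} to translate the Hermitian transpose from the matrix side to the tensor side, and finally applying $\mathcal{L}^{-1}$ yields $\mathcal{U}^H \bullet \mathcal{U} = \mathcal{I}$; the companion identity $\mathcal{U} \bullet \mathcal{U}^H = \mathcal{I}$ and the corresponding statements for $\mathcal{V}$ follow by the same argument. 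For $\mathcal{L}$-diagonality of $\mathcal{S}$, each $\widetilde{\bm{S}}^p$ is diagonal in the ordinary matrix sense, so $\widetilde{\mathcal{S}}(i,j,k,\ell) = 0$ whenever $i \neq j$, for every $(k,\ell)$; since $\mathcal{L}^{-1}$ acts independently on each tensor-scalar $\mathcal{S}(i,j,:,:)$, the off-diagonal slots remain identically zero tensor-scalars after inverse-transforming, which matches Definition \ref{def:tensor_L_diagonal}.

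The only delicate step is the bookkeeping between the three equivalent representations employed throughout: tensor-scalar $\mathcal{L}$-products, $P$ parallel matrix products, and the block-diagonal matrix form. In particular I must confirm that the tensor Hermitian transpose commutes with $\text{MatView}$ and $\text{blkdiag}$ as described by Lemma \ref{lemma:tensor_hermintian_transpose}, and that vanishing of off-diagonal entries in the transform-domain matrix sense implies vanishing tensor-scalars in the time-domain sense. Once these translations are made explicit, the theorem reduces cleanly to $P$ parallel instances of the classical matrix SVD existence result, with no additional analytic difficulty.
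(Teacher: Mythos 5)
Your proposal is correct and follows essentially the same route as the paper's own proof: transform to the $\mathcal{L}$-domain, run $P=n_3n_4$ parallel matrix SVDs, assemble them via the block-diagonal representation of the $\mathcal{L}$-product together with the Hermitian-transpose lemma, and inverse-transform to verify the factorisation, orthogonality, and $\mathcal{L}$-diagonality. No substantive differences to report.
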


   \begin{proof}
   We show that Alg. \ref{alg:L_SVD} outputs one $\mathcal{L}$-SVD decomposition by constructing a correspondence between $\mathcal{L}$-SVD and matrix SVD. We verify that $\mathcal{U} \in \mathfrak{R}^{n_1 \times n_1}$ and $\mathcal{V} \in \mathbb{R}^{n_2 \times n_2}$ are orthogonal, and $\mathcal{S} \in \mathbb{R}^{n_1 \times n_2}$ is $\mathcal{L}$-diagonal.

   As shown in Alg. \ref{alg:L_SVD}, the matrix SVD $\widetilde{\bm{A}}^{p} = \widetilde{\bm{U}}^{p} \widetilde{\bm{S}}^{p} \widetilde{\bm{V}}^{p}$ for $p \in [P]$ with $P=n_3n_4$. We represent those $n_3n_4$ parallel SVDs using the following block diagonal matrices
   \begin{equation}\label{eq:tensor_L_SVD_block_matrix_view}
   \left[
   \begin{array}{ccc}
    \widetilde{\bm{A}}^{1} &  & \\
    & \ddots & \\
    & & \widetilde{\bm{A}}^{P}\\
    \end{array}
    \right] =  \left[
   \begin{array}{ccc}
    \widetilde{\bm{U}}^{1} &  & \\
    & \ddots & \\
    & & \widetilde{\bm{U}}^{P}\\
    \end{array}
    \right]  \left[
   \begin{array}{ccc}
    \widetilde{\bm{S}}^{1} &  & \\
    & \ddots & \\
    & & \widetilde{\mathcal{S}}^{P}\\
    \end{array}
    \right] \left[
   \begin{array}{ccc}
    \widetilde{\mathcal{V}}^{1} &  & \\
    & \ddots & \\
    & & \widetilde{\mathcal{V}}^{P}\\
    \end{array}
    \right],
   \end{equation}
  which can be properly represented as
  \begin{equation}
  \text{blkdiag}(\text{MatView}(\mathcal{A})) = \text{blkdiag}(\text{MatView}(\mathcal{U})) \cdot \text{blkdiag}(\text{MatView}(\mathcal{S})) \cdot \text{blkdiag}(\text{MatView}(\mathcal{V})),
  \end{equation}
   We perform the inverse transform $\mathcal{L}^{-1}$ on $\widetilde{\mathcal{U}}$, and $\widetilde{\mathcal{S}}$, we get $\mathcal{U}$ and $\mathcal{S}$, respectively. Note that for $\widetilde{\mathcal{V}}$, it involves the tensor Hermintian transpose as given in Lemma \ref{lemma:tensor_hermintian_transpose}.

   We then show that $\mathcal{U}$ and $\mathcal{V}$ are orthogonal, relying on the forward and backward transform $\mathcal{L}$ and $\mathcal{L}^{-1}$. We first show that $\mathcal{U}^H ~\bullet~ \mathcal{U} =  \mathcal{I}$. From the matrix SVD, we already know that each $\widetilde{U}^{p}$ is orthgonal, for $p \in [P]$ with $P=n_3n_4$. In the block diagonal matrix form, we have
   \begin{equation}\label{eq:tensor_L_SVD_orthogonal}
   \left[
   \begin{array}{ccccc}
    \widetilde{\mathcal{U}^{H}}^{1} &  & & &\\
    & &  \ddots & & \\
    & & & & \widetilde{\mathcal{U}^{H}}^{P}\\
    \end{array}
    \right]
   \left[
   \begin{array}{ccccc}
    \widetilde{\mathcal{U}}^{1} & &  & &\\
    & & \ddots & & \\
    & & & & \widetilde{\mathcal{U}}^{P}\\
    \end{array}
    \right] = \left[
   \begin{array}{ccccc}
    \bm{I} &  & & & \\
    & & \ddots & &\\
    & & & & \bm{I} \\
    \end{array}
    \right],
   \end{equation}
   where the equality follows from the definition of the tensor Hermintian transpose in Lemma \ref{lemma:tensor_hermintian_transpose}. Note that (\ref{eq:tensor_L_SVD_orthogonal}) can be transformed back to the target equality $\mathcal{U}^H ~\bullet~ \mathcal{U} =  \mathcal{I}$.

   Following the above forward-backward transform scheme, one can easily verify that $\mathcal{U} ~\bullet~ \mathcal{U}^H =  \mathcal{I}$,  $\mathcal{V}^H ~\bullet~ \mathcal{V} =  \mathcal{I}$, and $\mathcal{V} ~\bullet~ \mathcal{V}^H =  \mathcal{I}$, meaning that both $\mathcal{U}$ and $\mathcal{V}$ are $\mathcal{L}$-based orthgonal tensors. Moreover, each matrix $\widetilde{\bm{S}}^{p} $ in (\ref{eq:tensor_L_SVD_block_matrix_view}) is diagonal, and then we know that $\mathcal{S}$ is $\mathcal{L}$-diagonal according to Definition \ref{def:tensor_L_diagonal}.
   \end{proof}

  Remark \ref{remark:tensor_has_more_eigenvectors} state that there are so many tensor-eigenvalues and tensor-eigenvectors. Here, we provide a much simpler condition as a positive indicator when the output of Alg. \ref{alg:L_SVD} is actually a unique decomposition. Note that if the conditions required by Theorem \ref{theorem:unique_canonical_eigendecomposition} do not hold, then we cannot the output $\mathcal{L}$-SVD is not unique.

  \begin{definition}\label{def:tensor_eigenvalue_eigenvector_canonical_set}
  \textbf{(Canonical set of tensor-eigenvalues and tensor-eigenvectors)} Given a tensor $\mathcal{A} \in \mathfrak{R}^{n_1 \times n_2}$, a canonical set of tensor-eigenvalues and tensor-eigenvectors is a set of minimum size, ordered such that $\text{abs}(\lambda_1) \succeq \text{abs}(\lambda_2) \succeq ... \succeq \text{abs}(\lambda_n)$, which contains the information to reproduce any other pair of tensor-eigenvalue and tensor-eigenvector of $\mathcal{A}$ that satisfies (\ref{eq:eigenvalue_eigenvector}).
  \end{definition}

  Next, Theorem \ref{theorem:unique_canonical_eigendecomposition} states that if there exists a canonical set of tensor-eigenvalues and tensor-eigenvectors, then the $\mathcal{L}$-SVD returned by Alg. \ref{alg:L_SVD} is unique.

  \begin{lemma}\label{lemma:tensor_basis_condition}
  Given $\mathcal{X} \in \mathfrak{R}^{n_1 \times n_2}$, then $\mathcal{X} $ spans $\mathfrak{R}^{n_1}$ if and only if $\text{bkldiag}(\text{MatView}(\widetilde{X}))$ has rank $n_1P$. Moreover, $\mathcal{X} $ is a basis if and only if $\text{bkldiag}(\text{MatView}(\widetilde{\mathcal{X}}))$ are invertible.
  \end{lemma}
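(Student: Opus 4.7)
The plan is to use Lemma \ref{lemma:tensor_product_transform_domain_matrix_multiplication} to translate every tensor-linear combination $\mathcal{X} \bullet \bm{c}$ into an ordinary block-diagonal matrix-vector product in the transform domain, so that the statements about spanning and being a basis decouple into $P = n_3 n_4$ independent standard linear-algebra statements about the blocks $\widetilde{\bm{X}}^p$. Since $\mathcal{L}:\mathfrak{R}^{n_1 \times 1} \to \mathfrak{R}^{n_1 \times 1}$ is a bijection, every $\bm{y} \in \mathfrak{R}^{n_1 \times 1}$ corresponds bijectively to $\widetilde{\bm{y}}$, whose $\text{MatView}$ is a sequence of $P$ vectors in $\mathbb{R}^{n_1}$.

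For the first equivalence, I would observe that $\mathcal{X}$ spans $\mathfrak{R}^{n_1}$ iff for every $\bm{y} \in \mathfrak{R}^{n_1 \times 1}$ there exists $\bm{c} \in \mathfrak{R}^{n_2 \times 1}$ with $\mathcal{X} \bullet \bm{c} = \bm{y}$. By Lemma \ref{lemma:tensor_product_transform_domain_matrix_multiplication}, this equation is equivalent to the block-diagonal system
\begin{equation}
\text{blkdiag}(\text{MatView}(\widetilde{\mathcal{X}})) \cdot \text{blkdiag}(\text{MatView}(\widetilde{\bm{c}})) = \text{blkdiag}(\text{MatView}(\widetilde{\bm{y}})),
\end{equation}
which separates into $\widetilde{\bm{X}}^p \widetilde{\bm{c}}^p = \widetilde{\bm{y}}^p$ for $p \in [P]$. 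Solvability for every $\widetilde{\bm{y}}^p \in \mathbb{R}^{n_1}$ is equivalent to each $\widetilde{\bm{X}}^p$ having full row rank $n_1$. Since the rank of a block-diagonal matrix is the sum of the ranks of its blocks, this is exactly the condition $\text{rank}\bigl(\text{blkdiag}(\text{MatView}(\widetilde{\mathcal{X}}))\bigr) = n_1 P$.

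For the second equivalence, $\mathcal{X}$ being a basis additionally requires tensor-linear independence of the tensor-columns, i.e., $\mathcal{X} \bullet \bm{c} = \bm{0}$ forces $\bm{c} = \bm{0}$. Again via the transform-domain decoupling, this is equivalent to each $\widetilde{\bm{X}}^p$ having trivial kernel, i.e., full column rank $n_2$. Combined with the span condition (full row rank $n_1$), each $\widetilde{\bm{X}}^p$ must be square and invertible, which is exactly invertibility of the block-diagonal matrix. I would also note the natural forcing $n_1 = n_2$ that this produces.

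The main obstacle is mostly bookkeeping rather than a deep point: one must carefully justify the claim that an identity of the form $\mathcal{X} \bullet \bm{c} = \bm{y}$ really does decouple in the transform domain across the $P$ slices, and that the resulting block-diagonal viewpoint correctly tracks both the domain ($\bm{c}$) and codomain ($\bm{y}$) sizes. This is guaranteed by Lemma \ref{lemma:tensor_product_transform_domain_matrix_multiplication} together with the invertibility of $\mathcal{L}$, so once that correspondence is invoked the equivalences reduce to the standard matrix facts that a linear map is surjective iff it has full row rank, and bijective iff its matrix is invertible.
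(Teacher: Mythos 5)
Your proposal is correct and follows essentially the same route as the paper: pass to the transform domain via Lemma \ref{lemma:tensor_product_transform_domain_matrix_multiplication}, decouple into $P$ independent matrix systems, and invoke the standard rank characterizations of surjectivity and invertibility. If anything, your treatment of the basis clause is more careful than the paper's, which silently equates ``rank $n_1P$'' with invertibility; your observation that independence forces full column rank and hence square invertible blocks (so $n_1=n_2$) fills a gap the paper leaves implicit.
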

  \begin{proof}
  Assume that $\bm{y} = \mathcal{X} \bullet \bm{a}$ where $\bm{y} \in \mathfrak{R}^{n_1},~\bm{a} \in \mathfrak{R}^{n_2}$, we have
  \begin{equation}\label{eq:existence_tensor_basis}
  \text{bkldiag}(\text{MatView}(\widetilde{\bm{y}})) = \text{bkldiag}(\text{MatView}(\widetilde{\mathcal{X}})) \cdot \text{bkldiag}(\text{MatView}(\widetilde{\bm{a}})).
  \end{equation}
  Assume that (\ref{eq:existence_tensor_basis}) holds for any $\bm{y} \in \mathfrak{R}^{n_1}$, then each matrix $\bm{\widetilde{X}}^p$ must have rank $n_1$, namely, $\text{bkldiag}(\text{MatView}(\widetilde{X}))$ has rank $n_1P$.

  Conversely, if $\text{bkldiag}(\text{MatView}(\widetilde{\mathcal{X}}))$ has rank $n_1P$ (being invertible), then each matrix $\bm{\widetilde{X}}^p$ must have rank $n_1$, and thus (\ref{eq:existence_tensor_basis}) holds for any $\bm{y} \in \mathfrak{R}^{n_1}$. Therefore, $\mathcal{X} $ is a basis for the space $\mathfrak{R}^{n_1}$.
  \end{proof}

  \begin{theorem}\label{theorem:unique_canonical_eigendecomposition}
  (\textbf{Unique canonical $\mathcal{L}$-SVD}) Given a tensor $\mathcal{A} \in \mathfrak{R}^{n_1 \times n_2}$, $\mathcal{A}$ has a unique canonical set of $n = \min(n_1,n_2)$ tensor-eigenvalues and tensor-eigenvalues if the matrices $\widetilde{\bm{A}}^{p},~p\in[n_3n_4]$ have distinct eigenvlues with distince magnitudes. This canonical set of tensor-eigenvectors corresponds to a basis of the tensor-column subspace and also the tensor-row subspace, yielding an tensor-eigendecomposition as (\ref{eq:tensor_L_SVD_representation}).
  \end{theorem}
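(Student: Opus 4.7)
The plan is to lift the problem into the transform domain via $\mathcal{L}$ and work slice-by-slice, reducing the tensor statement to a family of classical matrix eigendecompositions that we can then glue back together. First I would apply $\mathcal{L}$ to obtain $\widetilde{\mathcal{A}}$ and the block-diagonal representation $\text{blkdiag}(\text{MatView}(\widetilde{\mathcal{A}}))$. Under the hypothesis that each $\widetilde{\bm{A}}^{p}$ has $n$ distinct eigenvalues with distinct magnitudes, classical matrix theory gives, for each $p \in [n_3 n_4]$, a unique eigenpair list $(\mu_i^{p}, \widetilde{\bm{x}}_i^{p})_{i=1}^{n}$ ordered by decreasing magnitude, where the eigenvectors are fixed up to sign/phase by the unit-norm convention used in Alg.~\ref{alg:L_SVD}. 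Inverse-transforming the slice-wise eigenvalues into tensor-scalars $\bm{\lambda}_i = \mathcal{L}^{-1}(\mu_i^{1},\ldots,\mu_i^{n_3 n_4})$ and the slice-wise eigenvectors into tensor-columns $\bm{x}_i \in \mathfrak{R}^{n_1 \times 1}$, I would check that the equation $\mathcal{A} \bullet \bm{x}_i = \bm{\lambda}_i \bullet \bm{x}_i$ follows because the block-diagonal form of (\ref{eq:eigenvalue_eigenvector}) decouples exactly into the $n_3 n_4$ per-slice matrix eigenvalue equations.

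Next I would address the ``canonical and unique'' portion. The distinct-magnitude hypothesis is crucial: it ensures that the ordering $\text{abs}(\bm{\lambda}_1) \succeq \text{abs}(\bm{\lambda}_2) \succeq \cdots \succeq \text{abs}(\bm{\lambda}_n)$ fixes, independently for every slice $p$, the same permutation of matrix eigenvalues—so the assembly into tensor-scalars is unambiguous. For uniqueness as a generating set, I would take an arbitrary tensor-eigenpair $(\bm{\lambda}, \bm{x})$ satisfying (\ref{eq:eigenvalue_eigenvector}), pass to the transform domain, and observe that each slice $(\mu^{p}, \widetilde{\bm{x}}^{p})$ must be a matrix eigenpair of $\widetilde{\bm{A}}^{p}$ whenever $\widetilde{\bm{x}}^{p} \neq 0$, hence coincides (up to scaling) with one of the $(\mu_i^{p}, \widetilde{\bm{x}}_i^{p})$. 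Thus every tensor-eigenpair of $\mathcal{A}$ is reconstructible by selecting, slice-by-slice, from our canonical list of $n$ pairs—exactly the extra freedom flagged in Remark~\ref{remark:tensor_has_more_eigenvectors}—and minimality follows because each slice already requires $n$ distinct eigenvalues.

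For the basis claim, I would invoke Lemma~\ref{lemma:tensor_basis_condition}. Gathering the tensor-eigenvectors into $\mathcal{X} = [\bm{x}_1, \ldots, \bm{x}_n]$, the block-diagonal matrix $\text{blkdiag}(\text{MatView}(\widetilde{\mathcal{X}}))$ has each diagonal block equal to the full matrix of distinct eigenvectors of $\widetilde{\bm{A}}^{p}$, hence invertible; by Lemma~\ref{lemma:tensor_basis_condition} this makes $\mathcal{X}$ a basis of the tensor-column subspace. Running the same argument on $\mathcal{A}^{H}$ (whose per-slice eigenstructure is obtained by the conjugate transpose of $\widetilde{\bm{A}}^{p}$ and thus inherits the distinct-magnitude property) supplies a basis for the tensor-row subspace. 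Assembling these two orthogonalized families together with the square-roots of the tensor-eigenvalues of $\mathcal{A}^{H} \bullet \mathcal{A}$ along the $\mathcal{L}$-diagonal reproduces exactly the factorization (\ref{eq:tensor_L_SVD_representation}), as already justified in the proof of Theorem~\ref{theorem:tensor_L_SVD}.

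The main obstacle I anticipate is making the notion of ``canonical set'' fully precise: Remark~\ref{remark:tensor_has_more_eigenvectors} warns that (\ref{eq:eigenvalue_eigenvector}) admits a combinatorial explosion of solutions obtained by mixing eigenpairs across transform-domain slices, so the task is to show that size $n$ suffices and that no smaller set can. The distinct-magnitude hypothesis is what breaks this degeneracy, and the careful bookkeeping of how an arbitrary tensor-eigenpair projects slicewise onto the canonical list will be the delicate step—everything else reduces to bookkeeping in the transform domain via the block-diagonal identity of Lemma~\ref{lemma:tensor_product_transform_domain_matrix_multiplication}.
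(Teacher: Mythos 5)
Your proposal is correct and follows essentially the same route as the paper's proof: pass to the transform domain, use the distinct-magnitude hypothesis to order the slice-wise matrix eigenvalues and assemble them via $\mathcal{L}^{-1}$ into a canonical set, invoke Lemma~\ref{lemma:tensor_basis_condition} for the basis claim, and derive uniqueness from the strict ordering. Your additional step showing that an arbitrary tensor-eigenpair decomposes slice-by-slice onto the canonical list is a welcome elaboration of the ``reproducibility'' requirement in Definition~\ref{def:tensor_eigenvalue_eigenvector_canonical_set}, which the paper's proof treats only implicitly.
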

  \begin{proof}
  Since all the tensor-eigenvalues of $\widetilde{\mathcal{A}}_p,~p\in[n_3n_4]$ are distinct with distinct magnitudes, there are $nn_3n_4$ distinct numbers. It implies that any unique canonical set must have at least $n$ tensor-eigenvalues and correspondingly $n$ tensor-eigenvectors. Let $\widetilde{\lambda}_i^p$ ($i\in[n],~p\in[P]$ with $P=n_3n_4$) be the $i$th tensor-eigenvalue of $\widetilde{\bm{A}}^p$, with the ordering $|\widetilde{\lambda}_i^1| > ...> \widetilde{\lambda}_i^p > ... > |\widetilde{\lambda}_i^{P}|$. Then $\bm{\lambda}_i = \mathcal{L}^{-1}(\widetilde{\lambda}_i^1,...,\widetilde{\lambda}_i^P)$ is a canonical set of tensor-eigenvalues.

  Now we show that this set constitutes an tensor-eigenbasis. Consider the matrix SVD $\widetilde{\bm{A}}^{p} = \widetilde{\bm{U}}^{p} \widetilde{\bm{S}}^{p} \widetilde{\bm{V}}^{p}$ for $p \in [P]$ with the ordering of eigenvalues. Then, we construct $\mathcal{U}$ and $\mathcal{V}^H$ as in Alg. \ref{alg:L_SVD}. Since all the tensor-eigenvalues of $\widetilde{\mathcal{A}}_p,~p\in[n_3n_4]$ are distinct with distinct magnitudes, then $\text{bkldiag}(\text{MatView}(\widetilde{\mathcal{U}}))$ and $\text{bkldiag}(\text{MatView}(\widetilde{\mathcal{V}}))$ has rank at least $nP$ with $n = \min(n_1,n_2)$. According to Lemma \ref{lemma:tensor_basis_condition}, we know that $\mathcal{U}$ and $\mathcal{V}^H$ are a basis of the tensor-column subspace and the tensor-row subspace.

  Finally, we show that this set is unique. For any canonical set, according to Definition \ref{def:tensor_eigenvalue_eigenvector_canonical_set}, we know that $\text{abs}(\bm{\lambda}_1) \succeq \text{abs}(\bm{\lambda}_i)$ for $i=2:n$. Recalling the ordering (\ref{eq:ordering_tensor_scalar}), we get $\bm{\widetilde{\lambda}}_1^p \geq \bm{\widetilde{\lambda}}_i^p$ for $i=2:n$. Since all the values $\bm{\widetilde{\lambda}}_i^p$ are unique, we have $\bm{\widetilde{\lambda}}_1^p > \bm{\widetilde{\lambda}}_i^p$ for $i=2:n$. Therefore, there is no other choice of $\bm{\widetilde{\lambda}}_1^p$ in a canonical set, namely, $\bm{\lambda}_1$ is unique. Repeating the above argument on the rest tensor-eigenvalues $\bm{\lambda}_i$ for $i=2:n$, then the uniqueness is verified.
  \end{proof}

\section{$\mathcal{L}$-QR Decomposition}

  We define the $\mathcal{L}$-QR for fourth-order tensors. We propose a Householder transformation-based algorithm that outputs the expected triangular factorization.

\subsection{Tensor QR: $\mathcal{L}$-QR}

  \begin{definition}\label{def:matrix_QR}
  (\textbf{Matrix QR})
  The QR factorization of an $n_1 \times n_2$ matrix $\bm{A}$ is given by
  \begin{equation}
  \bm{A} = \bm{Q} \bm{R},
  \end{equation}
  where $\bm{Q} \in \mathbb{R}^{n_1 \times n_1}$ is orthogonal, $\bm{R} \in \mathbb{R}^{n_1 \times n_2}$ is upper triangular, and we assume $n_1 \geq n_2$.
  \end{definition}

  \begin{definition}\label{def:tensor_L_QR}
  (\textbf{$\mathcal{L}$-QR})
  The $\mathcal{L}$-QR factorization of a tensor $\mathcal{A} \in \mathfrak{R}^{n_1 \times n_2}$ is given by
  \begin{equation}
  \mathcal{A} = \mathcal{Q} \bullet \mathcal{R},
  \end{equation}
  where $\mathcal{Q} \in \mathfrak{R}^{n_1 \times n_1}$ is orthogonal, $\mathcal{R} \in \mathfrak{R}^{n_1 \times n_2}$ is upper triangular, and we assume $n_1 \geq n_2$.
  \end{definition}

  The Gram-Schmidt orthogonalization procedure used in \cite{kilmer2008third,kilmer2011factorization} may fall victim to the ``catastrophic cancelation" problem \cite{golub2012matrix} where one starts with large values and ends up with small values with large relative errors. Suppose $\bm{A} = [\bm{A}_1,...,\bm{A}_{n_1}]$ and $\bm{Q} = [\bm{Q}_1,...,\bm{Q}_{n_1}]$, the $j$th column $\bm{A}_j$ is successively reduced in length as components in the directions of $\bm{Q}_1,...,\bm{Q}_{j-1}$ are substracted, resulting in a small vector with large relative errors, which will destroy the accuracy of the computed $\bm{Q}_j$, i.e., the newly computed $\bm{Q}_j$ may not be orthogonal to the previous $\bm{Q}_1,...,\bm{Q}_{j-1}$.

\subsection{Householder QR for Matrices}

  It is known that multiplication of a unitary transformation to a matrix or vector inherently preserves the length. The Householder transformation is a kind of unitary transformation, and in the following, we introduce the Householder QR for matrices that is implemented by applying a sequence of Householder transformations.

\subsubsection{\textbf{Householder Transformation}}

  Let $\bm{u} \in \mathbb{R}^{n}$ be a vector of unit length, i.e., $||\bm{u}||=1$. The \textit{Householder transformation} (also called Householder reflections/reflectors) is an $n \times n$ square matrix
  \begin{equation}\label{eq:householder_matrix}
  \bm{P} = \bm{I} - 2\bm{u} \bm{u}^T,
  \end{equation}
  where $\bm{u}$ is called a Householder vector.
  If a vector $\bm{x}$ is multiplied by $\bm{P}$, then it is reflected in the hyperplane span$\{\bm{u}\}^{\perp}$. Note that Householder matrices are unitary.

  \begin{remark}\label{remark:househoulder_reflection}
  (\textbf{Intuitive understanding}) Let $\bm{S} = \text{span}\{\bm{u}\}^{\perp}$ that is a space perpendicular to $\bm{u}$. Imagine the space $\bm{S}$ as a ``mirror", we then the following two interpretations:
  \begin{itemize}
    \item Any vector in $\bm{S}$ (along the imaginary mirror) is not reflected. Let $\bm{z} \in \bm{S}$ be any vector that is perpendicular to $\bm{u}$, we get
        \begin{equation}
        \bm{P} \bm{z} =  (\bm{I} - 2\bm{u} \bm{u}^T)\bm{z} = \bm{z} - 2\bm{u}\underbrace{(\bm{u}^T\bm{z})}_{0} = \bm{z},
        \end{equation}
        which means that $\bm{z}$ is unchanged.
    \item Any vector has the component that is orthogonal to $\bm{S}$ (orthogonal to the minor), then that component reverses in direction. Any vector $\bm{x}$ can be expressed as $\bm{x} = \bm{z} + \bm{u}^T \bm{x} \bm{u}$ where $\bm{z} \in \bm{S}$ (perpendicular to $\bm{u}$) and $\bm{u}^T \bm{x} \bm{u}$ is the component in the direction of $\bm{u}$. We get
        \begin{equation}
        \begin{split}
        \bm{P} \bm{x} =  (\bm{I} - 2\bm{u} \bm{u}^T)(\bm{z} + \bm{u}^T \bm{x} \bm{u}) &= \bm{z} + \bm{u}^T \bm{x} \bm{u} - 2\bm{u}\underbrace{(\bm{u}^T\bm{z})}_{0}-  2\bm{u} \bm{u}^T (\bm{u}^T \bm{x}) \bm{u} \\
        &= \bm{z} + \bm{u}^T \bm{x} \bm{u} - 2 \bm{u}^T \bm{x} \underbrace{(\bm{u}^T \bm{u})}_{1} \bm{u} = \bm{z} - \bm{u}^T \bm{x} \bm{u},
        \end{split}
        \end{equation}
        which means that the component $\bm{u}^T \bm{x} \bm{u}$ has reversed its direction.
  \end{itemize}
  \end{remark}

  \begin{algorithm}[t]
  \caption{Householder QR (\cite{golub2012matrix}, Alg. 5.2.1)}
  \begin{algorithmic}
   \label{alg:complex_QR}
  \STATE \textbf{Input: } $\bm{A} \in \mathbb{R}^{n_1 \times n_2}$
  \FOR{$j = 1~\text{to}~n_2$}
  	\STATE Compute $\bm{u}$ for vector $\bm{x} = \bm{A}(j:n_1,j)$ according to (\ref{eq:householder_transform_matrix_case});\\
    \STATE Construct the $j$th Household matrix $\widehat{\bm{H}}^j = \bm{I}_{n_1 - j + 1} - 2 \bm{u} \bm{u}^H$ where $\bm{I}_{n_1 - j + 1}$ is the identity matrix of size $(n_1 - j + 1) \times (n_1 - j + 1)$.
    \STATE $\bm{A}(j:n_1,j:n_2) \leftarrow  \widehat{\bm{H}}^j  \bm{A}(j:n_1, j:n_2)$;
    \IF{$j < n_1$}
        \STATE $\bm{A}(j+1:n_1,j) = \bm{u}(2:n_1 -j + 1)$;
    \ENDIF
  \ENDFOR
  \end{algorithmic}
\end{algorithm}

  We know from (\ref{eq:householder_matrix}) that Householder matrices are rank-$1$ modifications of the identity matrix and they can be used to zero selected components of a vector. Specifically, suppose we are given a nonzero vector $\bm{x} \in \mathbb{R}^n$ and want $\bm{P} \bm{x}$ to be a scalar multiplication of $\bm{e}_1 = \bm{I}_1$. Note that
  \begin{equation}
  \bm{P}\bm{x} = \left(\bm{I} - \frac{2\bm{u}\bm{u}^T}{\bm{u}^T \bm{u}} \right) \bm{x} = \bm{x} -  \frac{2\bm{u}^T\bm{x}}{\bm{u}^T \bm{u}}\bm{u},~~\text{and}~~\bm{P}\bm{x} \in \text{span}\{\bm{e}_1\}
  \end{equation}
  imply that $\bm{u} \in \text{span}\{\bm{x},\bm{e}_1\}$, i.e., $\bm{u}$ can be expressed as  $\bm{u} = \bm{x} + a \bm{e}_1$.
  Setting $a = \pm ||\bm{x}||_2$ \cite{golub2012matrix}, we get
  \begin{equation}\label{eq:householder_transform_matrix_case}
  \bm{u} = \bm{x} \pm ||\bm{x}||_2\bm{e}_1~~\Longrightarrow~~\bm{P}\bm{x} = \left(\bm{I} - \frac{2\bm{u}\bm{u}^T}{\bm{u}^T \bm{u}} \right) \bm{x} = \mp ||\bm{x}||_2 \bm{e}_1.
  \end{equation}

  Given a matrix $\bm{A} \in \mathbb{R}^{n_1 \times n_2}$ ($n_1 \geq n_2$), Alg. \ref{alg:complex_QR} (\cite{golub2012matrix}, Alg. 5.2.1) finds Householder matrices $\bm{H}_1,...,\bm{H}_{n_2}$. Let $\bm{Q} = \bm{H}_1\cdots \bm{H}_{n_2}$, than $\bm{Q}^T \bm{A} = \bm{R}$ is upper triangular. In the pseudocode of Alg. \ref{alg:complex_QR}, the upper triangular part of $\bm{A}$ is overwritten by the upper triangular par of $\bm{R}$ and components $j+1:m$ of the $j$th Householder vector are stored in $\bm{A}(j+1:m,j),~j< m$.

%
%
%

\begin{remark}\label{remark:reflection_direction}
Considering (\ref{eq:householder_transform_matrix_case}), for a given vector $\bm{x}$, there exists a Householder vector $\bm{u}$ such that $\bm{x}$ is reflected into another vector $\bm{y}$, i.e., $(\bm{I} - 2\bm{u}\bm{u}^T) \bm{x} = \bm{y}$ with $||\bm{x}||_2 = ||\bm{y}||_2$. Note that the vector $\bm{v} = \bm{x} - \bm{y}$ is perpendicular to the space $\bm{S} = \text{span}\{\bm{u}\}^{\perp}$, implying that $\bm{u}$ equals a unit vector in the direction $\bm{v}$, i.e., $\bm{u} = \bm{v} / ||\bm{v}||_2$.
\end{remark}

\subsection{Householder $\mathcal{L}$-QR}

  In a tensor subspace in Definition \ref{def:tensor_column_row_subspaces}, we want to transform (or reflect) a given tensor-column vector $\bm{x} \in \mathfrak{R}^{n}$ into another tensor-column vector $\bm{y} \in \mathfrak{R}^{n}$ where $||\bm{x}||_F = ||\bm{y}||_F$.  Note that the signs $\pm$ defined for real scalars does not work for tensor-scalars in Definition \ref{def:tensor_scalar}, thus (\ref{eq:householder_transform_matrix_case}) is not directly available for us to reverse the direction as in Remark \ref{remark:househoulder_reflection}.
%

\subsubsection{\textbf{Householder Transformation}}

  Let $\bm{u} \in \mathfrak{R}^{n}$ be a nonzero vector of tensor-scalars. The \textit{Householder transformation} is an $n \times n$ square tensor $\mathcal{P} \in \mathfrak{R}^{n \times n}$ of the following form
  \begin{equation}\label{eq:householder_tensor}
  \mathcal{P} = \mathcal{I} - 2 (\bm{u}^H \bullet \bm{u})^{-1} \bullet \bm{u} \bullet \bm{u}^H.
  \end{equation}
  If a vector $\bm{x} \in \mathfrak{R}^n$ is multiplied by $\mathcal{P}$, then it is reflected in the hyperplane t-span$\{\bm{u}\}^{\perp}$.

  Given a vector $\bm{x}$, the key is to find the ``right" Household vector $\bm{u}$.

  For both $\bm{x}$ and $\bm{u}$, partition them into two parts: the first tensor-scalar and the rest. Let $\bm{x}' = \bm{x}_{[2:n]}$ and $\bm{u}' = \bm{u}_{[2:n]}$, we want to find a House vector $\bm{u}$ such that
  \begin{equation}\label{eq:householder_L_QR_one_vector}
  \begin{split}
   &\bm{x} = \left[
   \begin{array}{c}
   \bm{x}_1\\
   \bm{x}'
   \end{array}
   \right],~~\bm{u} = \left[
   \begin{array}{c}
   \bm{u}_1\\
   \bm{u}'
   \end{array}
   \right],\\
   &\left(\mathcal{I} - \frac{2}{\bm{u}^H \bullet \bm{u}} \bullet \left[
   \begin{array}{c}
   \bm{u}_1\\
   \bm{u}'
   \end{array}
   \right] \bullet \left[
   \begin{array}{c}
   \bm{u}_1\\
   \bm{u}'
   \end{array}
   \right]^H
    \right) \left[
   \begin{array}{c}
   \bm{x}_1\\
   \bm{x}'
   \end{array}
   \right] = \left[
   \begin{array}{c}
   \angle \bm{x}_1 \bullet ||\bm{x}||_F\\
   \bm{0}'
   \end{array}
   \right],
   \end{split}
  \end{equation}
  where $\bm{0}'$ denotes a vector with $n-1$ zero tensor-scalars.

  As pointed out in Remark \ref{remark:reflection_direction}, we know that the direction of the Householder vector $\bm{u}$ is given by the direction by
  \begin{equation}
  \bm{v} = \left[
   \begin{array}{c}
   \bm{x}_1 - \angle \bm{x}_1 \bullet ||\bm{x}||_F\\
   \bm{x}'
   \end{array}
   \right].
  \end{equation}
  Then, we normalize this vector to make the first entry equals to ``e":
  \begin{equation}
  \bm{u} = \frac{\bm{v}}{||\bm{v}||_F} = \frac{1}{\bm{x}_1 - \angle \bm{x}_1 \bullet ||\bm{x}||_F} \bullet
  \left[
   \begin{array}{c}
   \bm{x}_1 - \angle \bm{x}_1 \bullet ||\bm{x}||_F\\
   \bm{x}'
   \end{array}
   \right] = \left[
   \begin{array}{c}
   e\\
   \bm{x}'/\bm{v}_1
   \end{array}
   \right],
  \end{equation}
  where $\bm{v}_1 = \bm{x}_1 - \angle \bm{x}_1 \bullet ||\bm{x}||_F$.

  \begin{algorithm}[t]
  \caption{Householder $\mathcal{L}$-QR}
  \begin{algorithmic}
   \label{alg:Householder_L_QR}
  \STATE \textbf{Input: } $\mathcal{A} \in \mathfrak{R}^{n_1 \times n_2}$
  \STATE Initialize $\mathcal{Q}^0 = \mathcal{I}_{n_1}$  where $\mathcal{I}_{n_1}$ is the identity matrix of size $n_1 \times n_1$.
  \FOR{$j = 1~\text{to}~n_2$}
  	\STATE $\bm{x} = \mathcal{A}(j:n_1,j)$;\\
    \STATE $\bm{u} = \bm{x} -  \angle \bm{x}_1 \bullet ||\bm{x}||_F \bullet \bm{e}_1$ where $\bm{x}_1 = \angle \bm{x}_1 \bullet \text{abs}(\bm{x})$;\\
    \STATE $\widehat{\mathcal{H}}^j = \mathcal{I}_{n_1 - j + 1} - 2(\bm{u}^H \ \bullet \bm{u})^{-1} \bullet \bm{u} \bullet \bm{u}^H$;\\
    \STATE $\mathcal{A}(j:n_1,j:n_2) = \widehat{\mathcal{H}}^j \bullet \mathcal{A}(j:n_1, j:n_2)$;
    \IF{$j \geq 2$}
    \STATE $\mathcal{Q}^{j} \leftarrow  \text{diag}(\mathcal{I}_{j-1},\widehat{\mathcal{H}}^j) \bullet \mathcal{Q}^{j-1}$;
    \ELSE
    \STATE $\mathcal{Q}^{j} \leftarrow \mathcal{Q}^{j-1} \bullet \widehat{\mathcal{H}}^j$.
    \ENDIF
  \ENDFOR
  \end{algorithmic}
\end{algorithm}

  \begin{theorem}\label{theorem:householder_L_QR}
  Alg. \ref{alg:Householder_L_QR} outputs an $\mathcal{L}$-QR as in Definition \ref{def:tensor_L_QR}. If $\mathcal{A} = \mathcal{Q} \bullet \mathcal{R}$ has full tensor-column rank, suppose $\mathcal{A} = [\mathcal{A}_1,...,\mathcal{A}_{n_1}]$ and $\mathcal{Q} = [\mathcal{Q}_1,...,\mathcal{Q}_{n_1}]$, then
  \begin{equation}
  \text{t-span}(\mathcal{A}_1,...,\mathcal{A}_{k}) = \text{t-span}(\mathcal{Q}_1,...,\mathcal{Q}_{k}),~~k \in [n_1].
  \end{equation}
  \end{theorem}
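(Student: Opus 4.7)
The plan is to establish the theorem in two pieces: first that Algorithm \ref{alg:Householder_L_QR} terminates with a valid $\mathcal{L}$-QR factorization (Definition \ref{def:tensor_L_QR}), and second that the partial tensor-column subspaces coincide at every stage. Throughout, I will exploit the block-diagonal matrix view of Lemma \ref{lemma:tensor_product_transform_domain_matrix_multiplication}, which reduces statements about $\bullet$ to parallel statements about each transform-domain slice $\widetilde{\bm{A}}^p$, $p \in [P]$, $P = n_3 n_4$.

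First I would verify that each Householder reflector $\widehat{\mathcal{H}}^j = \mathcal{I} - 2(\bm{u}^H \bullet \bm{u})^{-1} \bullet \bm{u} \bullet \bm{u}^H$ built in the loop is well defined and orthogonal. The tensor-scalar $\bm{u}^H \bullet \bm{u}$ corresponds slice-wise to the squared $\ell_2$-norms $\|\widetilde{\bm{u}}^p\|^2$, so in the transform domain its inverse exists as long as the working subvector $\bm{x} = \mathcal{A}(j:n_1,j)$ has no vanishing slice, a genericity assumption that is implicit in the full-rank hypothesis used later. Expanding $\widehat{\mathcal{H}}^{jH} \bullet \widehat{\mathcal{H}}^j$ using Lemma \ref{lemma:tensor_hermintian_transpose}, the tensor-scalar $(\bm{u}^H\bullet \bm{u})^{-1}$ is its own Hermitian transpose, so the cross terms cancel exactly as in the scalar Householder calculation and one obtains $\widehat{\mathcal{H}}^{jH} \bullet \widehat{\mathcal{H}}^j = \mathcal{I}$. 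Products of orthogonal tensors are orthogonal (again by the block-diagonal view), so the accumulated $\mathcal{Q}^{n_2}$ is orthogonal and $\mathcal{Q} \triangleq (\mathcal{Q}^{n_2})^H$ inherits this property.

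Next I would prove the zeroing identity $\widehat{\mathcal{H}}^j \bullet \bm{x} = \angle \bm{x}_1 \bullet \|\bm{x}\|_F \bullet \bm{e}_1$ for the chosen Householder vector $\bm{u} = \bm{x} - \angle\bm{x}_1 \bullet \|\bm{x}\|_F \bullet \bm{e}_1$. Writing $\bm{y} = \angle\bm{x}_1 \bullet \|\bm{x}\|_F \bullet \bm{e}_1$, so $\bm{u} = \bm{x} - \bm{y}$, a direct expansion using $\|\bm{x}\|_F = \|\bm{y}\|_F$ (which follows from $\angle\bm{x}_1$ being a pure ``phase'' tensor-scalar of unit modulus in every transform slice) gives $\bm{u}^H \bullet \bm{u} = 2(\bm{u}^H \bullet \bm{x})$, exactly paralleling the matrix derivation in Remark \ref{remark:reflection_direction}; this is the key algebraic identity and I would verify it slice-by-slice in the transform domain. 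Applying this to $\widehat{\mathcal{H}}^j \bullet \bm{x} = \bm{x} - 2(\bm{u}^H \bullet \bm{u})^{-1} \bullet \bm{u} \bullet (\bm{u}^H \bullet \bm{x})$ collapses to $\bm{x} - \bm{u} = \bm{y}$, as required. An induction on $j$ then shows that after step $j$ the first $j$ columns of $\mathcal{A}$ have become upper triangular, so the final $\mathcal{R}$ is upper triangular. Composing with the orthogonal $\mathcal{Q}$ gives $\mathcal{A} = \mathcal{Q} \bullet \mathcal{R}$.

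For the subspace equality, the upper triangular structure of $\mathcal{R}$ immediately yields $\mathcal{A}_k = \sum_{i=1}^{k} \mathcal{Q}_i \bullet \mathcal{R}(i,k)$, so $\mathcal{A}_k \in \text{t-span}(\mathcal{Q}_1,\ldots,\mathcal{Q}_k)$ and one inclusion follows by induction on $k$. The reverse inclusion requires inverting the leading $k\times k$ triangular block of $\mathcal{R}$, i.e.\ showing that each diagonal tensor-scalar $\mathcal{R}(k,k)$ is invertible under the full tensor-column-rank assumption; invoking Lemma \ref{lemma:tensor_basis_condition} in the transform domain, full rank of $\mathcal{A}$ forces every $\widetilde{\bm{A}}^p$ to have rank $n_2$, hence every $\widetilde{\bm{R}}^p(k,k) \neq 0$, which is exactly invertibility of the tensor-scalar $\mathcal{R}(k,k)$ via Lemma \ref{lemma:tensor_scalar_multiplication_is_an_operation}. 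I expect the main obstacle to be the reflection identity in the middle paragraph: unlike the matrix case where $\pm\|\bm{x}\|_2$ is a scalar sign choice, here $\angle \bm{x}_1$ is a genuine tensor-scalar and the argument hinges on it satisfying $\angle\bm{x}_1 \bullet (\angle\bm{x}_1)^H = e$ slice-wise together with non-vanishing of $\bm{x}_1 - \angle\bm{x}_1 \bullet \|\bm{x}\|_F$ in every transform slice, a degenerate-slice issue that has no analogue in the classical proof and that one must handle with a genericity or rank hypothesis.
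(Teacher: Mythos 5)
Your proposal is correct and follows essentially the same route as the paper: an induction over the loop iterations driven by the Householder zeroing identity, followed by the triangular-structure argument for the span equality. Two local differences are worth noting. First, the paper simply asserts the reflection property (its equation defining how $\widehat{\mathcal{H}}^j$ acts on $[\bm{x}_1;\bm{x}']$) and invokes it inside the induction, whereas you actually derive it from the identity $\bm{u}^H\bullet\bm{u}=2\,\bm{u}^H\bullet\bm{x}$ checked slice-wise, and you flag the genuine degeneracy issue --- that $(\bm{u}^H\bullet\bm{u})^{-1}$ and $\angle\bm{x}_1$ require every transform-domain slice of the working subvector to be nonvanishing, an issue with no classical analogue. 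The paper is silent on this, so your extra care fills a real gap rather than duplicating effort; if anything you could push further, since the paper's $\|\bm{x}\|_F$ is a single real scalar while norm preservation in your identity is naturally a per-slice statement. Second, for the reverse inclusion $\text{t-span}(\mathcal{Q}_1,\ldots,\mathcal{Q}_k)\subset\text{t-span}(\mathcal{A}_1,\ldots,\mathcal{A}_k)$ the paper uses a dimension-counting argument ($\text{t-span}(\mathcal{A}_1,\ldots,\mathcal{A}_k)$ has dimension $k$, hence equals the containing span), while you explicitly invert the leading triangular block of $\mathcal{R}$ via Lemma \ref{lemma:tensor_basis_condition}; your version is more constructive and makes visible exactly where the full-rank hypothesis is consumed (invertibility of each $\mathcal{R}(k,k)$ as a tensor-scalar), at the cost of a little more bookkeeping. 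Both arguments are sound; neither changes the overall structure of the proof.
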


  \begin{proof}
  We use the induction method to prove that Alg. \ref{alg:Householder_L_QR} outputs a structured $\mathcal{L}$-QR as in Definition \ref{def:tensor_L_QR}, such that $\mathcal{Q} \in \mathfrak{R}^{n_1 \times n_1}$ is orthogonal and $\mathcal{R} \in \mathfrak{R}^{n_1 \times n_2}$ is upper triangular. In the first iteration of Alg. \ref{alg:Householder_L_QR}, $\bm{x} = \mathcal{A}_1$, we know that
  \begin{equation}
  \begin{split}
  \widehat{\mathcal{H}}^1
  \left[
   \begin{array}{c}
   \bm{x}_1\\
   \bm{x}'
   \end{array}
   \right] = \left[
   \begin{array}{c}
   \angle \bm{x}_1 \bullet ||\bm{x}||_F\\
   \bm{0}'
   \end{array}
   \right],
  \end{split}
  \end{equation}
  according to (\ref{eq:householder_L_QR_one_vector}). Therefore, we have
  \begin{equation}
  \begin{split}
  \widehat{\mathcal{Q}}^1 &= \widehat{\mathcal{Q}}^0 \bullet \widehat{\mathcal{H}}^1 = \widehat{\mathcal{H}}^1, \\
  \widehat{\mathcal{Q}}^1 \bullet
  \left[
   \begin{array}{c}
   \mathcal{A}_{11}\\
   \mathcal{A}_1'
   \end{array}, \mathcal{A}_2, \cdots, \mathcal{A}_{n_2}
   \right] &= \left[
   \begin{array}{c}
   \angle \bm{x}_1 \bullet ||\bm{x}||_F\\
   \bm{0}'
   \end{array}, \widehat{\mathcal{H}}^1 \bullet \mathcal{A}_2, \cdots, \widehat{\mathcal{H}}^1 \bullet \mathcal{A}_{n_2}
   \right],
  \end{split}
  \end{equation}
  Let us assume that the triangulation (\ref{def:tensor_L_QR}) is correct in the $j$th iteration, i.e.,
  \begin{equation}
  \begin{split}
  \widehat{\mathcal{Q}}^j \bullet
  \left[\mathcal{A}_1, \mathcal{A}_2, \cdots, \mathcal{A}_{j}
   \right] &= \mathcal{R}_{[1:j]} = \left[\mathcal{R}_1, \mathcal{R}_2, \cdots, \mathcal{R}_{j}
   \right],
  \end{split}
  \end{equation}
  where $\mathcal{R}_{[1:j]}$ is upper triangular.

  Next, we verify that in the $(j+1)$th iteration,
  \begin{equation}
  \begin{split}
  \widehat{\mathcal{Q}}^{j+1} \bullet
  \left[\mathcal{A}_1, \mathcal{A}_2, \cdots, \mathcal{A}_{j+1}
   \right] &= \mathcal{R}_{[1:j+1]} = \left[\mathcal{R}_1, \mathcal{R}_2, \cdots, \mathcal{R}_{j+1}
   \right],
  \end{split}
  \end{equation}
  where $\mathcal{R}_{[1:j+1]}$ is upper triangular.
  Since   $\mathcal{Q}^{j+1} = \text{diag}(\mathcal{I}_{j},\widehat{\mathcal{H}}^{j+1}) \bullet  \mathcal{Q}^{j}$ and $\mathcal{R}(j+1:n_1,j+1:n_2) = \widehat{\mathcal{H}}^{j+1} \bullet \mathcal{A}(j+1:n_1, j+1:n_2)$,
  \begin{equation}
  \begin{split}
  \widehat{\mathcal{Q}}^{j+1} \bullet
  \mathcal{A}_{[1:j+1]} &=
  \left[
   \begin{array}{cc}
   \mathcal{I}_j & \\
    & \widehat{\mathcal{H}}^{j+1}
   \end{array}
   \right] \bullet (\widehat{\mathcal{Q}}^j \bullet
  \mathcal{A}_{[1:j+1]}) =  \left[
   \begin{array}{cc}
   \mathcal{I}_j & \\
    & \widehat{\mathcal{H}}^{j+1}
   \end{array}
   \right] \bullet \left[\mathcal{R}_{[1:j]}, \widehat{\mathcal{Q}}^j \bullet \mathcal{A}_{j+1} \right]\\
   & = \left[\mathcal{R}_{[1:j]}, \left[
   \begin{array}{cc}
   \mathcal{I}_j & \\
    & \widehat{\mathcal{H}}^{j+1}
   \end{array}
   \right] \bullet \left[
   \begin{array}{c}
   (\widehat{\mathcal{Q}}^j \bullet \mathcal{A}_{j+1})_{1:j-1} \\
   (\widehat{\mathcal{Q}}^j \bullet \mathcal{A}_{j+1})_{j:n_1}
   \end{array}
   \right]\right].
  \end{split}
  \end{equation}
  According to (\ref{eq:householder_L_QR_one_vector}), we know that the $j+1$th element of $\widehat{\mathcal{H}}^{j+1} \bullet (\widehat{\mathcal{Q}}^j \bullet \mathcal{A}_{j+1})_{j:n_1}$ is nonzero and the $j+2:n_1$th elements are zero. Therefore, $\mathcal{R}_{[1: j+1]}$ is upper triangular.

  Comparing the $k$th tensor-columns in $\mathcal{A} = \mathcal{Q} \bullet \mathcal{R}$ we get
  \begin{equation}
  \mathcal{A}_k = \sum\limits_{i=[k]} \mathcal{R}_{ik} \bullet \mathcal{Q}_i \in \text{t-span}\{\mathcal{Q}_1,...,\mathcal{Q}_k\}.
  \end{equation}
  Thus, $\text{t-span}(\mathcal{A}_1,...,\mathcal{A}_{k}) \subset \text{t-span}(\mathcal{Q}_1,...,\mathcal{Q}_{k})$.
  Since $\mathcal{A} = \mathcal{Q} \bullet \mathcal{R}$ has full tensor-column rank, $\text{rank}(\mathcal{A}) = n_2$, it follows that $\text{t-span}(\mathcal{A}_1,...,\mathcal{A}_{k})$ has dimension $k$ and so must equal $\text{t-span}(\mathcal{Q}_1,...,\mathcal{Q}_{k})$. The proof is completed.
  \end{proof}

\section{Performance Evaluation}
\label{sec:performance_evaluation}

  We apply the proposed framework to two examplar applications: video compression and one-shot face recognition. For video compression, we select two realistic and representative scenarios: an online NBA basketball video \cite{basketball_video} and a drone video of the Central Park in autumn \cite{central_park}. For one-shot face recognition, we use the Weizmann face database \cite{weizmann}.

  Our experiment platform is a Matlab IDE installed on a server with Linux operating system. The parameters of the server are as follows: Intel$\circledR$ Xeon$^{\circledR}$ Processor E5-2650 v3, $2.3$ GHz clock speed, $2$ CPU each having $10$ physical cores (virtually maximum $40$ threads), $25$ MB cache, and $64$ GB memory.

\subsection{Video Compression}

  \begin{figure}[t]\centering
  \includegraphics[width=0.49\textwidth]{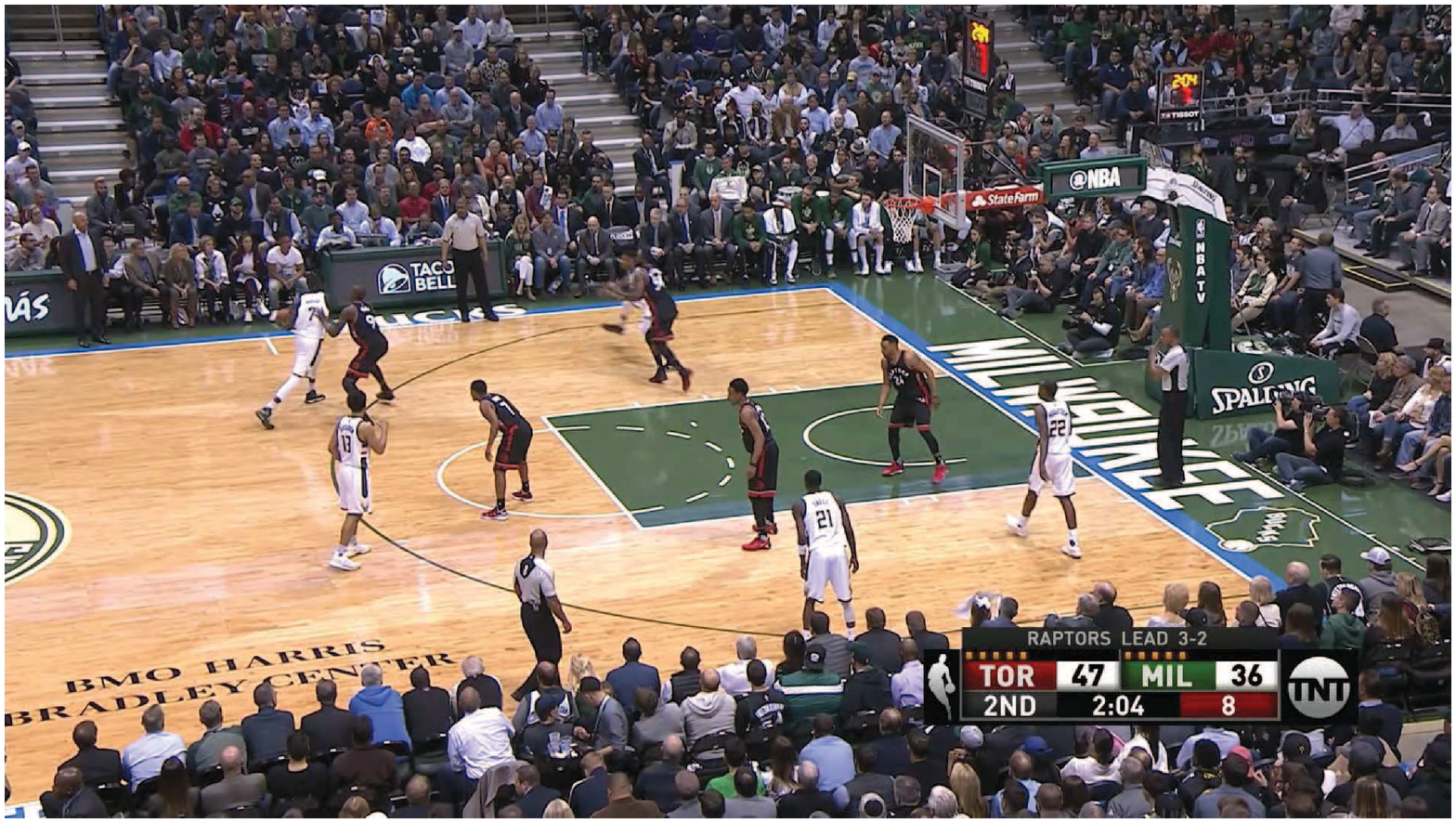}
  \includegraphics[width=0.49\textwidth]{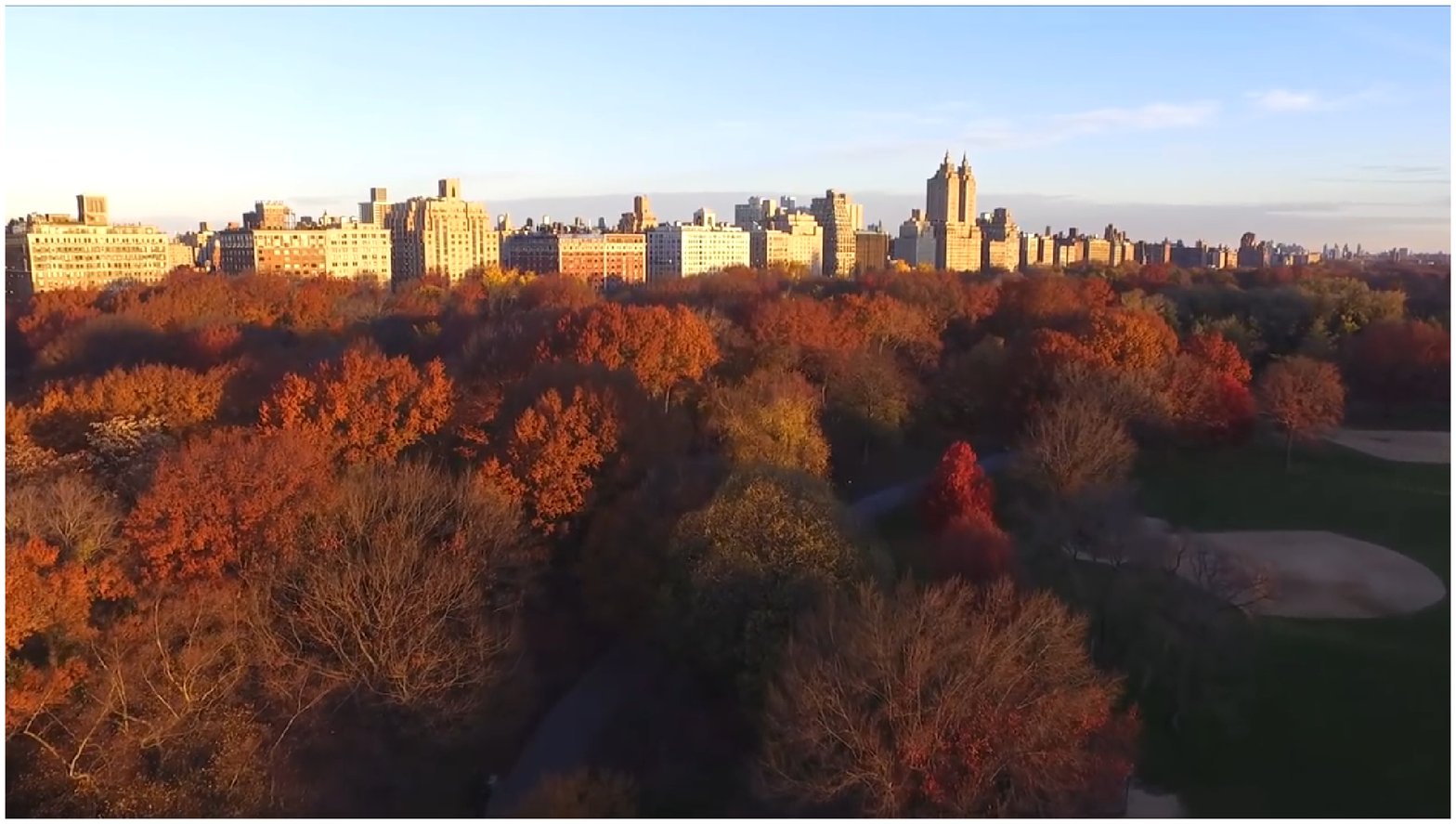}
  \caption{Overview of the NBA basketball video (the $30$th frame) and the drone video of Central Park in autumn (the $8,100$th frame).}\label{fig:video_overview}
  \end{figure}

   We ultilize two realistic video datasets, an online NBA basketball video \cite{basketball_video} (NBA video) and a drone video of the Central Park in autumn \cite{central_park} (CPark video), as shown in Fig. \ref{fig:video_overview}. Both videos have $29.97$ frames per second, and are in RGB$24$ format.
   The NBA video lasts for $17$ seconds with $535$ frames available, and has size $1,080 \times 1,920 \times 3 \times 535$. The CPark video lasts for $5$ minutes with $8,980$ frames available, and has size $720 \times 1,280 \times 3 \times 8,980$. To avoid memory outage, we use the first $120$ frames ($\approx 4$ seconds) of the NBA video and the $8,001$th to the $8,300$th frames ($\approx 10$ seconds) of the CPark video.

   First, we describe the compression methods, comparing with the truncated SVD based approach. The compression performance is measured by the ratio of the total numbers of entries in the SVD factors to the total number of entries in the original tensor.

   \begin{itemize}
     \item SVD (truncated SVD) \cite{klema1980singular,shi1999image}: For a matrix SVD $\bm{A} = \bm{U} \bm{S} \bm{V}^T \in \mathbb{R}^{n_1 \times n_2}$, the rank-$r$ approximation is $\bm{A}_r = \bm{U}_r \bm{S}_r \bm{V}_r^T$ with $r \leq \min(n_1, n_2)$, where $\bm{S}_r$ is an $r \times r$ diagonal matrix, $\bm{U}_r$ consists of the first $r$ columns of $U$, and $\bm{V}_r^T$ consists of the first $r$ rows of $\bm{V}^T$. The total number of entries in $\bm{U}_r$, $\bm{S}_r$, and $\bm{V}_r^T$ equals to $(n_1 + n_2 + 1)r$. Extending this approach to a fourth-order tensor $\mathcal{A} \in \mathbb{R}^{n_1 \times n_2 \times n_3 \times n_4}$ as follows: we use $n_1 \times n_2$ to denote the resolution, set $n_3 =3$ and $n_4$ to be the number of frames, and we perform SVD on each $n_1 \times n_2$ matrix. Then the compression ratio of rank-$r_1$ approximation is
         \begin{equation}
         \text{ratio}_{\text{SVD}} = \frac{(n_1 + n_2 + 1)r_1 n_3n_4}{n_1n_2n_3n_4}=\frac{(n_1 + n_2 + 1)r_1}{n_1n_2},
         \end{equation}
         where $1 \leq r_1 \leq \min(n_1, n_2)$. Note that $r_1$ is selected to be the maximum of the ranks of those $n_3n_4$ matrices, which indicates that collectively compress the tensor may have lower compression ratio than dealing each matrix separately.
     \item $\mathcal{L}$-SVD in Alg. \ref{alg:L_SVD}: we carry out the compression in the transform-domains. First, we set $n_1$ to be the frame-rows, $n_2$ to be the number of frames, $n_3 =3$ and $n_4$ to be the frame-columns. E.g., for the NBA video of size $1,080 \times 1,920 \times 3 \times 120$, its tensor representation $\mathcal{A} \in \mathbb{R}^{1080 \times 120 \times 3 \times 1920}$.    Secondly, we compute $\mathcal{L}$-SVD as in Alg. \ref{alg:L_SVD} and $\widetilde{\mathcal{U}}^p,~\widetilde{\mathcal{S}}^p$ and $\widetilde{\mathcal{V}}^p$ for $p \in [n_3n_4]$. It is know that $\widetilde{\mathcal{S}}^p$ is diagonal, so the total number of diagonal entries of $\widetilde{\mathcal{S}}^p,~p \in [n_3n_4]$ is $n_3n_4\cdot \min(n_1, n_2)$. Thirdly, we choose an integer $r_2,~1 \leq r_2 \leq n_3n_4\cdot \min(n_1, n_2)$, keep the $r_2$ largest diagonal entries of all $\widetilde{\mathcal{S}}^p,~p \in [n_3n_4]$ and set the rest to be $0$. If $\widetilde{\mathcal{S}}^p(i,i)$ is set to be $0$, then let the corresponding columns $\widetilde{\mathcal{U}}^p(:,i)$ and rows $\widetilde{\mathcal{V}}^p(i,:)$ also be $0$. Let us call the resulting tensors $\mathcal{U}_{r_2}$, $\mathcal{S}_{r_2}$ and $\mathcal{V}_{r_2}^{H}$, and the approximation $\mathcal{A}_{r_2}$. Then the compression ratio of $\mathcal{L}$-SVD approximation is
         \begin{equation}
         \text{ratio}_{\text{SVD}} = \frac{(n_1 + n_2 + 1)r_2}{n_1n_2n_3n_4},
         \end{equation}
         where $1 \leq r_2 \leq n_3n_4\cdot \min(n_1, n_2)$.
   \end{itemize}

   Let $\mathcal{L}$ be the two-dimensional Fourier transform, we get t-SVD \cite{martin2013order} (similar to \cite{Shuchin2014CVPR} that compressed grey videos, being third-order tensors). We also test when the transform $\mathcal{L}$ is set to be a discrete cosine transform (dct-SVD) and a Daubechies-4 Discrete Wavelet Transform (dwt-SVD) \cite{daubechies1992ten}, respectively. For the above compression method, we measure the approximation performance via the relative square error (RSE) that is defined in dB, i.e., $\text{RSE} = 20 \log_{10}(||\mathcal{A} - \mathcal{A}_r ||_F / ||\mathcal{A}||_F)$. Note that a $3$dB gain corresponds to $\frac{\sqrt{2}}{2}=0.707$, i.e., $\text{RSE}_{\text{new}} = 0.707 \cdot \text{RSE}_{\text{old}}$, while a $5$dB gain corresponds to $0.5623$ and a $10$dB gain corresponds to $0.316$, respectively.

  Fig. \ref{fig:video_compression} shows the compression results for the two videos. All tensor-based compression methods have lower RSE, since the tensor representation can exploit the inter-frame correlations. For the NBA video, dct-SVD achieves about $3$dB gains over tSVD while dwt-SVD achieves $3\sim 10$dB gains. This observation suggests that the human movements in videos are better captured by the discrete cosine domain representation and the discrete wavelet domain representation.

  For the CPark video, all compression methods have lower RSE errors, while the performance improvements of exploiting transforms are less, e.g., dwt-SVD achieves $3\sim 5$dB gains over tSVD. dct-SVD has $1 \sim 3$dB gains over matrix SVD for cases with compression ratio less than $0.6$, while tSVD is only slightly better than matrix SVD. However, for  cases with compression ratio bigger than $0.6$, dct-SVD and tSVD behave almost the same with matrix SVD. The possible reasons would be: 1) the CPark video captures an overview of a park, being much bigger than the basketball field, therefore, each frame is strongly compressible already and there is less improvement space for using a better transform; and 2) the NBA video has more stationary background while the players' activities can be better captured by transforms.

  Fig. \ref{fig:video_compression_runningtime} compares the running time of tSVD, dct-SVD and dwt-SVD, while we do not include the running time of SVD as it is unfair. For both videos, the running time increases as the compression ratio increases. Comparing dct-SVD and tSVD, it verifies the our intuition that the discrete cosine transform involves about half amount of computations taken by the Fourier transform. As expected, dwt-SVD requires much less amount of computations.

  \begin{figure}[t]\centering
  \includegraphics[width=0.49\textwidth]{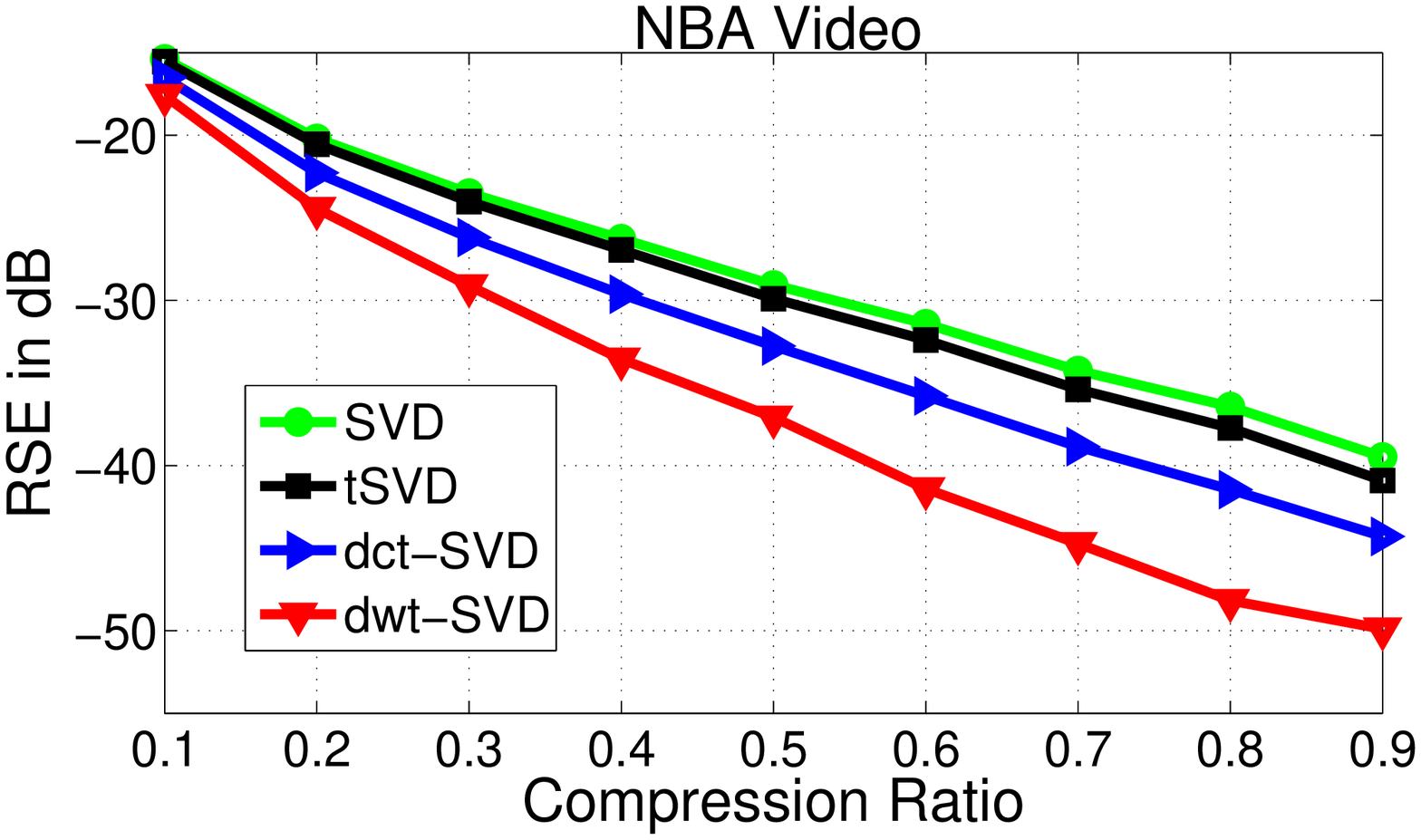}
  \includegraphics[width=0.49\textwidth]{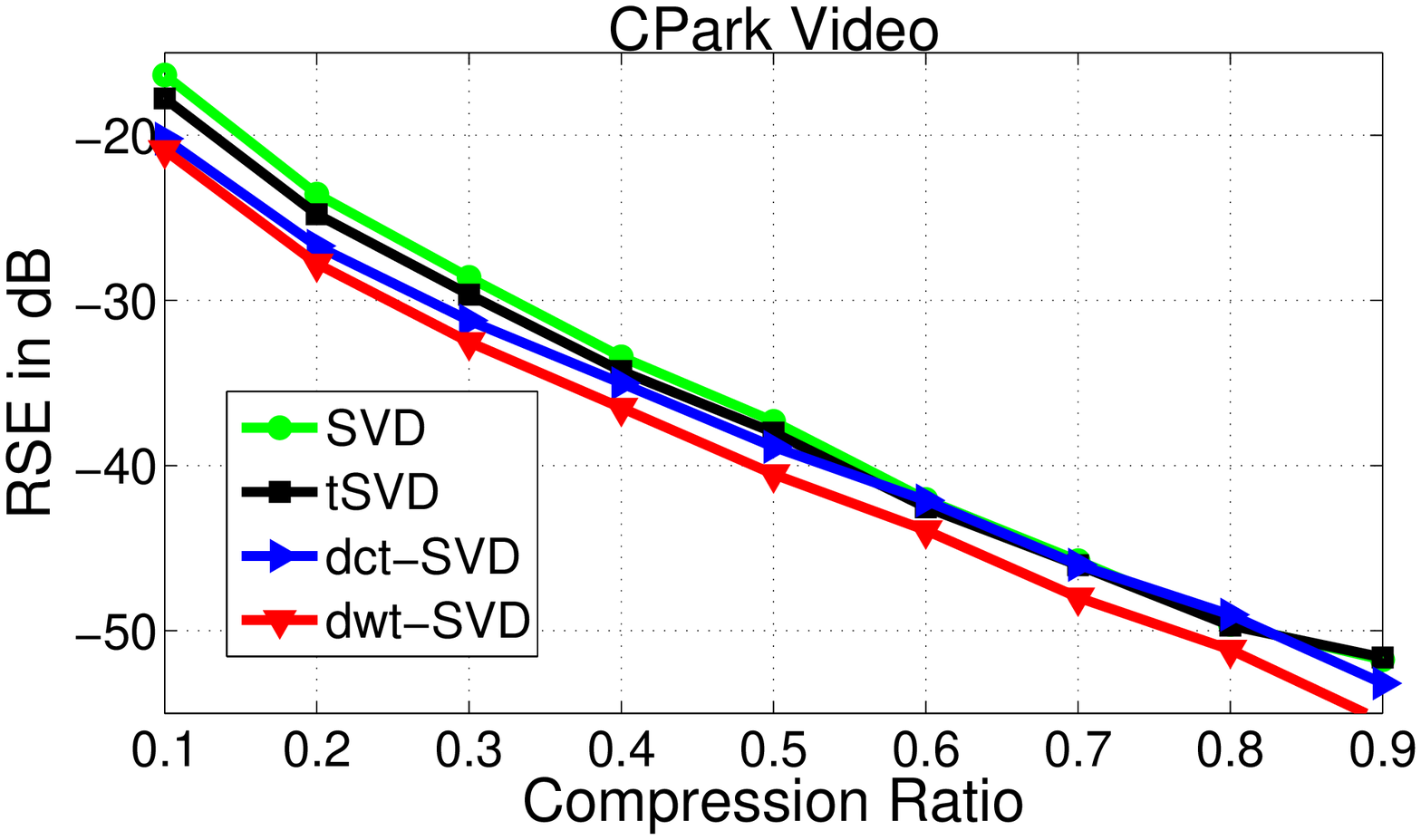}
  \caption{Comparison of compression performance: SVD, t-SVD, dct-SVD, and dwt-SVD.}\label{fig:video_compression}
  \end{figure}

  \begin{figure}[t]\centering
  \includegraphics[width=0.49\textwidth]{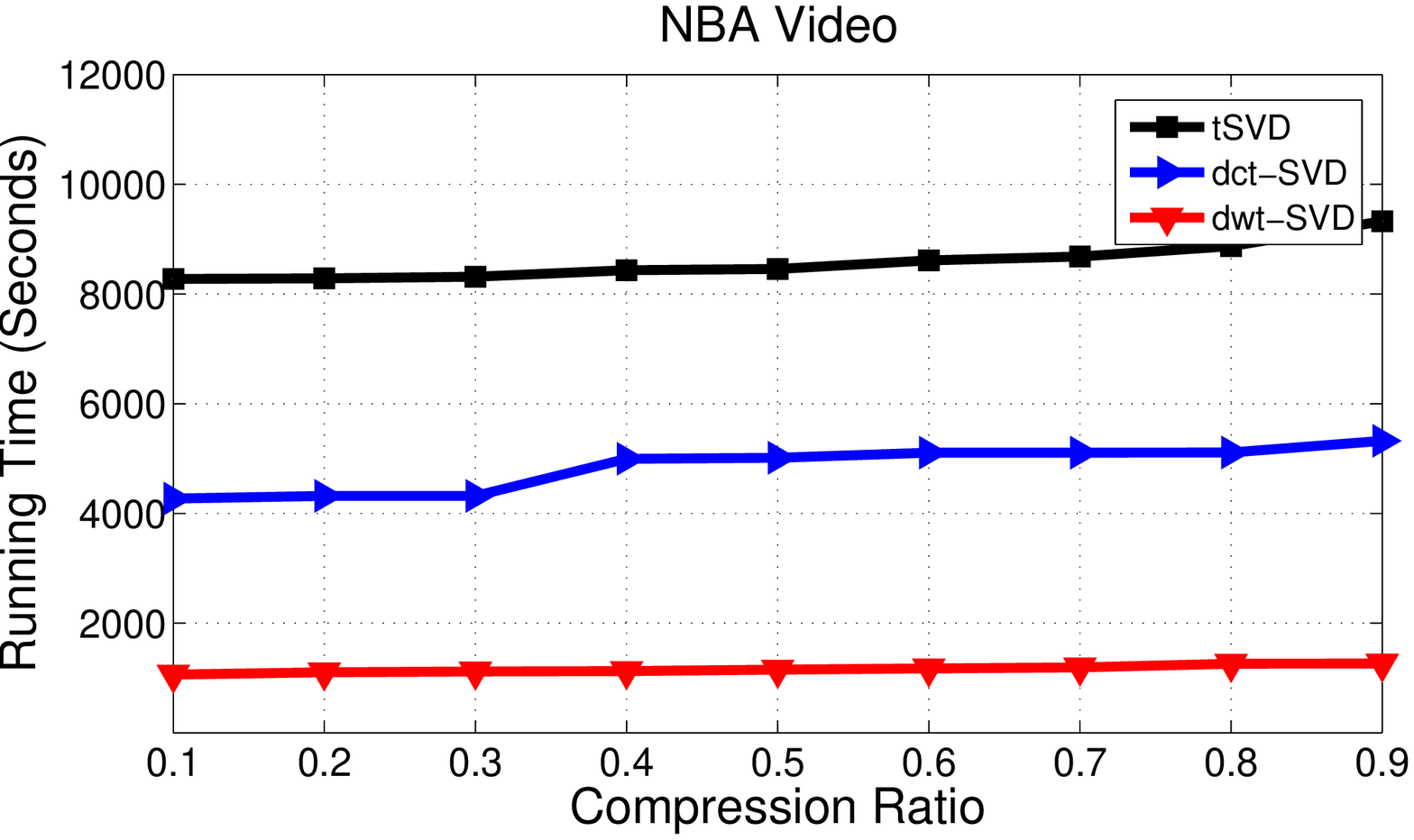}
  \includegraphics[width=0.49\textwidth]{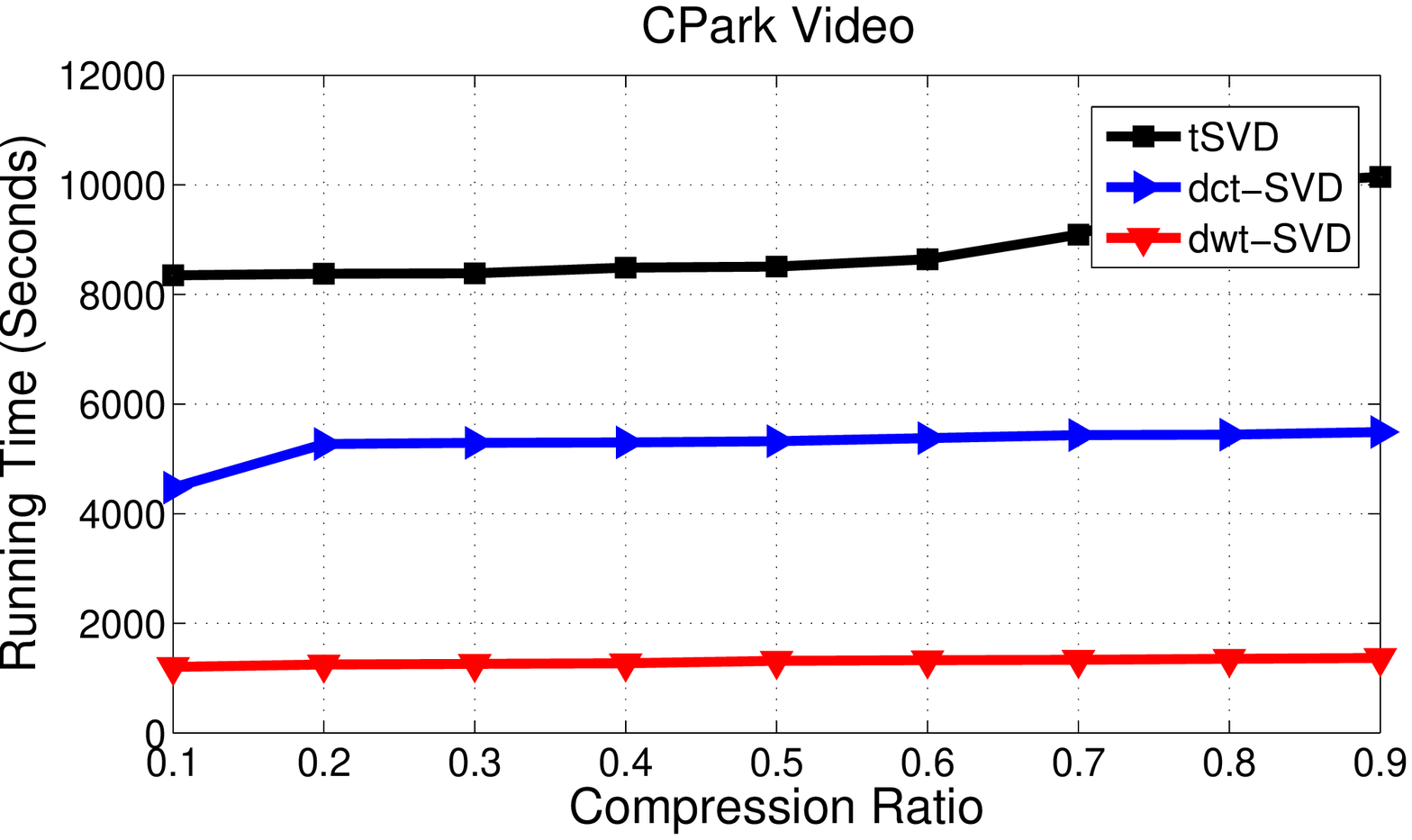}
  \caption{Comparison of the running time: t-SVD, dct-SVD, and dwt-SVD.}\label{fig:video_compression_runningtime}
  \end{figure}

\subsection{One-shot Face Recognition}

  We apply the $\mathcal{L}$-based tSVD to one-shot face recognition. In one-shot face recognition, the training data set has limited number of images of each person and we want to recognize a set of images of an unlabeled person. The one-shot face recognition algorithm is given in Alg. \ref{alg:video_face_recognition} which will be described in detailed in the following.

   Let $\mathcal{F}_1,..., \mathcal{F}_j,...,\mathcal{F}_{n_2} \in \mathbb{R}^{n_1 \times 1 \times n_3 \times n_4},~j \in [n_2]$ be a collection of $n_2$ videos, where we have a video over $n_4$ time slots for the $j$th person, namely, $n_4$ frames of size $n_1 \times n_3$. We use a tensor $\mathcal{A} \in \mathbb{R}^{n_1 \times n_2 \times n_3 \times n_4}$ to denote the mean substracted frames, i.e., $\mathcal{A}_j = \mathcal{F}_j - \Psi$ where
   \begin{equation}
   \Psi = \frac{1}{n_2} \sum\limits_{j \in [n_2]} \mathcal{F}_j.
   \end{equation}
   The covariance tensor $\mathcal{C}$ of $\mathcal{A}$ is given by $\mathcal{C} = \mathcal{A} \bullet \mathcal{A}^H$. Computing the $\mathcal{L}$-SVD $\mathcal{A} = \mathcal{U} \bullet \mathcal{S} \bullet \mathcal{V}^T$, then we know that $\mathcal{C} = \mathcal{U} \bullet \mathcal{S} \bullet \mathcal{S}^H \bullet \mathcal{U}^H$ where $\mathcal{U}$ is regarded as the ``video spaces". Then we project $\mathcal{A}$ on to the video spaces $\mathcal{U}$ as $\mathcal{G} = \mathcal{U}^H \bullet \mathcal{A}$. Note that by choosing some smaller $r$ so that $\mathcal{U}_{[r]} = \mathcal{U}(:,1:r)^H$, we can accelerate the computation.

   Let $\mathcal{T} \in \mathbb{R}^{n_1 \times 1 \times n_3 \times n_4},~j \in [n_2]$ denote a new video (or $n_4$ images) where each frame is size $n_1 \times n_3$, to be tested. We firs substract $\Psi$ and then do the projection, i.e., $\bm{c} = \mathcal{U}^H \bullet (\mathcal{T} - \Psi)$. Comparing the coefficients $\bm{c}$ with $\mathcal{G}$, we determine the classification to be the $j$th person with minimum distance (in $\ell_1$-norm).

  \begin{algorithm}[t]
  \caption{One-shot Face Recognition based on $\mathcal{L}$-SVD}
  \begin{algorithmic}
   \label{alg:video_face_recognition}
  \STATE \textbf{Input:} training video set $\mathcal{F}_1,..., \mathcal{F}_j,...,\mathcal{F}_{n_2}$, testing video $\mathcal{T}$, parameter $r$.
  \STATE Compute the mean video $\Psi = \frac{1}{n_2} \sum\limits_{j \in [n_2]} \mathcal{F}_j$;\\
  \FOR{$j = 1~\text{to}~n_2$}
  	\STATE $\mathcal{A}_j = \mathcal{F}_j - \Psi$;\\
  \ENDFOR
  \STATE $[\mathcal{U},~\mathcal{S},~\mathcal{V}] = \text{$\mathcal{L}$-SVD}(\mathcal{A})$ by Alg. \ref{alg:L_SVD};
  \STATE $\mathcal{G} = \mathcal{U}_{[r]}^H \bullet \mathcal{A}$;
  \STATE $\mathcal{T}' = \mathcal{T} - \Psi$;
  \STATE $\bm{c} = \mathcal{U}_{[r]} \bullet \mathcal{T}'$;
  \STATE $\widehat{j} \leftarrow \argmin_{j}~||\bm{c} - \mathcal{G}_j||_1$
  \end{algorithmic}
  \end{algorithm}

  We use the Weizmann face database \cite{weizmann}, which contains $28$ male persons in five viewpoints, three illuminations, and three expressions. Each image is size $512 \times 352 \times 3$. For computer memory reasons we reduced the resolution of the images to size $128 \times 88$. In the experiments, the training set was an fourth-order tensor consisting of third-order blocks for each of the $28$ peoples in the Weizmann database. The testing set was a third-order tensor of images over the various expressions or illuminations (which can also be thought of as movement through time). The baseline algorithm is the convolutional neural networks (CNN), where we adopt the implementation already included in the MATLAB deep learning toolbox.

  \begin{figure}[t]\centering
  \includegraphics[width=0.52\textwidth]{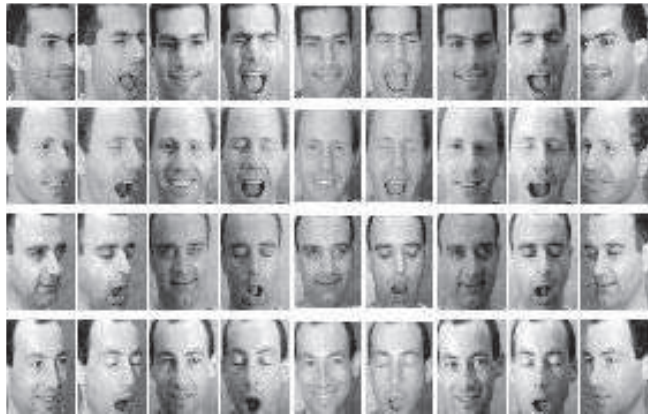}
  \caption{Overview: sample faces from the Weizmann face set.}\label{fig:face_overview}
  \end{figure}

  The recognition rate is defined to be the number of cases where $\widehat{j} = j$ to the total number of trials. All training and testing sets used all $28$ people, and the results are listed out in Table I. The recognition rate is averaged over the feature not listed in columns $1$, e.g., the first testing set rate of $70.3\%$ is averaged over three illuminations. The combinations in Table I create many cross-comparisons.

  As shown in Table I, we saw that CNN's recognition rate is not very satisfying for one-shot face recognition. Compared with the state-of-the-art high accuracy (over $90\%$ \cite{lecun2015deep}), the key difference is that there are only limited number of available images for training. tSVD's recognition rates are comparable to those of CNN, while we observe an improvement $5\% \sim 10\%$ for dct-SVD and  $13\% \sim 23\%$ dwt-SVD. Note that in the case ``exp. 1-3, view 2", dct-SVD's recognition rate is $4\%$ lower than that of CNN.

   \begin{table*}[t] \label{recognition_rate_table}\centering
   \setlength{\abovedisplayskip}{4pt}
   \caption{One-shot face recognition for the Weizmann face database.}
   \begin{tabular}{|c|c|c|c|c|c|}
   \hline
   Testing set & Training set & tSVD & dct-SVD & dwt-SVD & CNN  \\
   \hline\hline
   views 2-4, exp. 1  & exp. 1-3, ill. 1, view 3 & $70.3\%$ & $76.1\%$  & $88.2\%$ & $71.2\%$\\
   \hline
   views 2-4, exp. 2  & exp. 1-3, ill. 1, view 3 & $78.1\%$ & $80.5\%$ & $85.6\%$ & $72.5\%$\\
   \hline
   views 2-4, exp. 3  & exp. 1-3, ill. 1, view 3 & $75.5\%$ & $75.7\%$ & $87.9\%$ & $67.3\%$\\
   \hline\hline
   exp. 1-3, view 1  & views 2-4, ill. 1, exp. 1 & $53.4\%$ & $67.8\%$ & $77.3\%$ & $59.8\%$\\
   \hline
   exp. 1-3, view 2  & views 2-4, ill. 1, exp. 1 & $71.7\%$ & $69.3\%$ & $89.1\%$ & $73.3\%$\\
   \hline
   exp. 1-3, view 3  & views 2-4, ill. 1, exp. 1 & $70.2\%$ & $73.4\%$ & $91.6\%$ & $68.7\%$\\
   \hline\hline
   ill. 1-3, view 1  & views 2-4, ill. 1, exp. 1 & $61.4\%$ & $75.1\%$ & $87.4\%$& $68.5\%$\\
   \hline
   ill. 1-3, view 2  & views 2-4, ill. 1, exp. 1 & $79.7\%$ & $82.8\%$ &$89.1\%$& $75.6\%$\\
   \hline\hline
   ill. 1-3, view 1  & views 1,3,5, ill. 1, exp. 1 & $71.2\%$ & $79.4\%$ & $92.3\%$ & $77.3\%$\\
   \hline
   ill. 1-3, view 2  & views 1,3,5, ill. 1, exp. 1 & $74.6\%$ & $83.1\%$ & $87.3\%$ & $81.3\%$\\
   \hline
   \end{tabular}\vspace{-6pt}
   \end{table*}

\section{Conclusion}

   Our main contribution in this paper was to define a new tensor space, extending the conventional matrix space to fourth-order tensors. The key ingredient in this construction is defining a multiplication on a multidimensional discrete transforms. This new framework gives us an opportunity to design tensor products that match the physical interpretations across different modes, e.g., using a transform that captures periodicity in one mode while a new transform that reflects spatial correlations in another mode.

   We consider the SVD and QR decomposition. Although they are structurally similar to the well-known matrix counterparts, those two decompositions possess fundamental differences. Moreover, we apply this new tensor framework to both video compression and one-shot face recognition, and obtain significant performance improvements.

\bibliographystyle{IEEEbib}
\bibliography{Tensor_4D}

\end{document}